\DeclareMathOperator{\argmin}{argmin}
\newcommand{\R}{\mathbb{R}}
\newcommand{\Z}{\mathbb{Z}}
\newcommand{\I}{\mathbb{I}}
\newcommand{\E}{\mathbb{E}}
\newcommand{\X}{\mathcal{X}}
\newcommand{\M}{\mathcal{M}}
\newcommand{\A}{\mathcal{A}}
\newcommand{\F}{\mathcal{F}}
\newcommand{\V}{\mathcal{V}}
\newcommand{\B}{\mathcal{B}}
\newcommand{\LL}{\mathcal{L}}
\newcommand{\J}{\mathcal{J}}
\providecommand{\abs}[1]{\lvert#1\rvert}
\providecommand{\argmin}{\text{argmin}}
\newtheorem{lemma}{Lemma}
\newtheorem{theorem}{Theorem}
\newtheorem{remark}{Remark}
\theoremstyle{definition}
\numberwithin{equation}{section}
\providecommand{\keywords}[1]
{
  \small	
  \textbf{\textit{Keywords:}} #1
}
\definecolor{processblue}{cmyk}{0.96,0,0,0}
\tikzset{
  server/.style={circle, inner sep=0.0mm, minimum width=.8cm,
    draw=green,fill=green!10,thick},
  ellipseserver/.style={ellipse, inner sep=0.0mm, minimum width=.8cm,
    draw=green,fill=green!10,thick},
  buffer/.style={rectangle, rounded corners=3pt,
    inner sep=0.0mm, minimum width=.9cm, minimum height=.6cm,
    draw=orange,fill=blue!10,thick},
  vbuffer/.style={rectangle,rounded corners=3pt,
     inner sep=0.0mm,  minimum width=.6cm, minimum height=.8cm,
     draw=orange,fill=blue!10,thick},
   routing/.style={circle,inner sep=0pt,minimum size=.5cm,
     draw=red,minimum width=.1cm, fill=red!30},
   state/.style={inner sep=1.0mm, rounded corners=2pt,
    draw=green,fill=green!10,thick},
  dot/.style={circle, inner sep=0.0mm, minimum width=.6cm,
    draw=blue,fill=blue!10,thick},
  task/.style={circle, inner sep=0.0mm, minimum width=.6cm,
    draw=blue,fill=blue!10,thick},
  data/.style={rectangle, inner sep=0.0mm, minimum width=.5cm,
    minimum height=.4cm,
    draw=red,fill=red!10,thick}
}
\tikzset{
  pics/openrectangle/.style n args={3}{
    code = { %
      \pgfmathsetmacro\x{.5}
      \pgfmathsetmacro\y{4}
      \pgfmathsetmacro\z{.55}
      \coordinate (SN) at #1;
      \coordinate (SS) at ($(0,-.3)+#1$);
      \draw[thick,red] ([shift={(-\x,\y)}]SN)--([shift={(-\x, \z)}]SN)
      --([shift={(\x,\z)}]SN)--([shift={(\x,\y)}]SN);

      \foreach \i/\t in #2
      {
        \node[task] (\t) at ([shift={(0,\i)}]SN)  {\t};
      }

      \foreach \i/\d in #3
      {
        \node[data] (\d) at ([shift={(0,-\i)}]SS)  {C\d};
      }
    }
  }
}
\tikzset{
  pics/rack/.style n args={3}{
    code = { %
      \pgfmathsetmacro\x{1.7}
      \pgfmathsetmacro\y{4}
      \pgfmathsetmacro\z{-2}
      \pgfmathsetmacro\zz{-2.5}
      \coordinate (S) at ($#1!.5!#2$);
      \draw[thick,blue] ([shift={(-\x,\y)}]S)--([shift={(-\x, \z)}]S)
      --([shift={(\x,\z)}]S)--([shift={(\x,\y)}]S);
      \node at ([shift={(0,\zz)}]S) {Rack #3};
    }
  }
}
\begin{document}

\title{Queueing Network Controls via Deep Reinforcement Learning}
\date{\vspace{-5ex}}

\author[1,2]{J. G. Dai}
\author[,3]{Mark Gluzman}
\affil[1]{ School of Data Science,  Shenzhen Research Institute of Big Data, The Chinese
University of Hong Kong, Shenzhen}
\affil[2]{School of Operations Research and Information Engineering, Cornell University, Ithaca, NY}
\affil[3]{Center for Applied Mathematics, Cornell University, Ithaca, NY}

\maketitle
\abstract{

  Novel advanced policy gradient (APG) methods, such as Trust Region
  policy optimization and Proximal policy optimization (PPO), have
  become the dominant reinforcement learning algorithms because of  their
  ease of implementation and good practical performance. A
  conventional setup for notoriously difficult queueing network
  control problems is a Markov decision problem (MDP) that has three
  features: infinite state space, unbounded costs, and long-run
  average cost objective.  We extend the theoretical framework of
  these APG methods for such MDP problems.  The resulting PPO
  algorithm is tested  on a parallel-server system and large-size
  multiclass queueing networks. The algorithm consistently
  generates control policies that outperform state-of-art heuristics
  in literature in a variety of load conditions from light to heavy
  traffic. These policies are demonstrated to be near-optimal when the optimal
  policy can be computed.

  A key to the successes of our PPO algorithm is   the use of three  variance
  reduction techniques in estimating the relative value
  function via sampling. First, we use a discounted relative value function as an
  approximation of the relative value function.  Second, we propose
  regenerative simulation to estimate the discounted relative value
  function. Finally, we incorporate the approximating martingale-process
  method into the regenerative estimator.

}

 \keywords{Multiclass Queueing Network, Control Variate, Reinforcement Learning, Long-Run Average Cost }
\section{Introduction}

For more than 30 years, one of the most difficult problems in applied
probability and operations research is to find a scalable algorithm
for approximately solving the optimal control of stochastic processing
networks, particularly when they are heavily loaded. These optimal
control problems have many important applications including healthcare
\cite{Dai2019a} and communications networks \cite{SrikYing2014, Luong2019},
data centers \cite{McKeown1999,Maguluri2012}, and manufacturing systems
\cite{PerkKuma1989, Kumar1993}. Stochastic processing networks are a broad class of
models that were advanced in \cite{Harr2000} and \cite{Harr2002} and
recently recapitulated and extended in \cite{DaiHarr2020}.

In this paper, we demonstrate that a class of \emph{deep reinforcement
  learning} algorithms known as proximal policy optimization (PPO),
 generalized from \cite{Schulman2015, Schulman2017} to our
  setting, can generate control policies that consistently beat the
performance of all state-of-arts control policies known in the
literature. The superior performances of our control policies appear
to be robust as stochastic processing networks and their load
conditions vary, with little or no problem-specific configurations of
the algorithms.

Multiclass queueing networks (MQNs) are a special class of stochastic
processing networks.  They were introduced in \cite{Harr1988} and have
 been studied intensively  for more than 30 years for performance
analysis and controls; see, for example, \cite{Harrison1993,
  KumaKuma1994, BertPascTsit1994, Bramson1998, Willams1998,
  Bertsimas2011, Bertsimas2015, ChenMeyn1999, Henderson2003,
  Veatch2015}.  Our paper focuses primarily on MQNs  with long-run average  cost objectives for two reasons.
First, these stochastic control problems  are
notoriously difficult due to the size of the state space, particularly
in heavy traffic. Second,  a large body of research has motivated the development of  various algorithms and control policies that are based on
either heavy traffic asymptotic analysis or heuristics. See, for
example, fluid policies \cite{ChenYao1993}, BIGSTEP policies
\cite{Harr1996}, affine shift policies \cite{Meyn1997},
discrete-review policies \cite{Maglaras2000, AtaKuma2005}, tracking
policies \cite{Bauerle2001}, and ``robust fluid''
policies~\cite{Bertsimas2015} in the former group and
\cite{LuRamaKuma1994,KumaKuma1996} in the latter group.  We
demonstrate that our algorithms outperform the state-of-art algorithms
in \cite{Bertsimas2015}. In this paper, we will also consider an
$N$-model that belongs to the family of parallel-server systems,
another special class of stochastic processing networks.  Unlike an MQN
in which routing probabilities of jobs are fixed, a  parallel-server
system allows dynamic routing of jobs in order to achieve
load-balancing among service stations.  We demonstrate that our
algorithms achieves near optimal performance in this setting, again
with little special configuration of  them.

For the queueing networks with Poisson arrival and exponential service
time distribution, the control problems can be modeled within the
framework of Markov decision processes (MDPs) \cite{Puterman2005} via
uniformization.  A typical algorithm for solving an MDP is via \emph{policy
  iteration} or \emph{value iteration}.  However, in our setting, the corresponding MDP suffers
from the usual curse of dimensionality: there are a large number of
job classes, and the buffer capacity for each class is assumed to be
infinite. Even with a truncation of the buffer capacity either as a
mathematical convenience or a practical control technique, the
resulting state space is huge when the network is large and heavily
loaded.

A \emph{reinforcement learning (RL) problem} often refers to a
(\emph{model-free}) MDP problem in which the underlying model
governing the dynamics is not known, but sequences of data (actions,
states, and rewards), known as episodes in this paper, can be observed
under a given policy. It has been demonstrated that \emph{RL
  algorithms} for solving RL problems can successfully overcome the
curse of dimensionality in both the model-free and model-based MDP
problems. Two factors are the keys to the success in a model-based
MDP. First,  Monte Carlo sampling method  is used   to approximately evaluate
expectations. The sampling
method also naturally supports   the exploration  needed in RL algorithms.
Second,  a parametric, low-dimensional representation of a
value function and/or a policy can be used.  In recent years, RL algorithms that
use neural networks as an architecture for value function and policy
parametrizations have shown state-of-art results \cite{Bellemare2013, Mnih2015,
  Silver2017, OpenAI2019a}.

In recent years the Proximal Policy Optimization (PPO)
algorithm \cite{Schulman2017} has become a default algorithm \cite{Openai2017} for control optimization  in  new  challenging environments including robotics \cite{Akkaya2019}, multiplayer video games \cite{Vinyals2019, OpenAI2019a}, neural network architecture engineering \cite{Zoph2018}, molecular design \cite{Simm2020}.
In this paper we extend the PPO algorithm to MDP problems with \emph{unbounded cost} functions and
\emph{long-run average cost} objectives. The original PPO algorithm
\cite{Schulman2017} was proposed for problems with \emph{bounded cost}
function and \emph{infinite-horizon discounted} objective. It was
based on the trust region policy optimization (TRPO) algorithm developed
in \cite{Schulman2015}. We use two separate feedforward neural
networks, one for parametrization of the control policy and the other
for value function approximation, a setup common to \emph{actor-critic}
algorithms \cite{Mnih2015}.  We propose an approximating martingale-process
(AMP) method for variance reduction to estimate policy value functions
and show that the AMP estimation  speeds up convergence of the PPO algorithm in the model-based
setting.  We provide a set of instructions for implementing the PPO
algorithm specifically for MQNs. The instructions include the choice of initial stable
randomized policy, methods for improving   policy value function estimation,  architecture of the value and policy neural
networks, and the choice of hyperparameters.  The proposed instructions can be potentially adapted to other advanced policy optimization RL algorithms, e.g. TRPO. Given the success of the PPO in various domains and its ease of use   we  focus on the PPO algorithm in this paper to illustrate efficiency of the deep RL framework for queueing control optimization.

The actor-critic methods can be considered as a combination of value-based and policy based methods.  In a
\textit{value-based}  approximate dynamic programming (ADP) algorithm, we may assume a low-dimensional
approximation of the optimal value function (e.g.  the optimal value function is a linear combination of known
\textit{features} \cite{DeFarias2003a, Ramirez-Hernandez2007,
  Abbasi_Yadkori2014}).   Value-based ADP algorithm has dominated
in the stochastic control of queueing networks literature;
see, for example, \cite{ChenMeyn1999, MoalKumaVanR2008,
  Chenetal2009, Veatch2015}.  These algorithms however have not achieved
robust empirical success for a wide class of control problems. It is
now known that the optimal value function might have a complex
structure which is hard to decompose on features, especially if
decisions have effect over a long horizon \cite{Lehnert2018}.

In a \textit{policy-based} ADP algorithm, we aim to learn the
optimal policy directly. Policy gradient algorithms are used to optimize the objective value within a
parameterized family of policies via gradient descent; see, for example,
\cite{Marbach2001, Paschalidis2004, Peters2008}. Although they are particularly effective for problems with high-dimensional action
space, it might be difficult to  reliably estimate the gradient of
the value under the current policy.  A direct sample-based estimation
typically suffers from high variance in gradient estimation \cite[Section 3]{Peters2008}, \cite[Section 5]{Marbach2001}, \cite[Section 1.1.2]{Baxter2001}.
Thus, actor-critic methods have been proposed \cite{Konda2003} to estimate the value function and use it
as a baseline and bootstrap for gradient direction approximation. The
actor-critic method with Boltzmann parametrization of policies and
linear approximation of the value functions has been applied for
parallel-server system control in \cite{Bhatnagar2012}.
 The standard policy gradient methods typically perform one gradient update per data sample which yields poor data
efficiency, and robustness, and an attempt to use a finite batch of samples to estimate the gradient and perform multiple steps of optimization ``empirically ... leads to destructively large policy
updates'' \cite{Schulman2017}. In \cite{Schulman2017}, the authors also note that the deep Q-learning algorithm \cite{Mnih2015} ``fails on many simple problems''.

 In \cite{Schulman2015,Schulman2017}, the authors propose
  \textit{``advanced policy gradient''} methods to overcome the
  aforementioned problems by designing novel objective functions that constrain
the magnitude of policy updates to avoid performance collapse caused by
large changes in the policy.  In \cite{Schulman2015} the authors prove
that minimizing a certain surrogate objective function guarantees
decreasing the expected discounted cost. Unfortunately,  their
theoretically justified step-sizes of policy updates cannot be
computed from available information for the RL algorithm.  Trust
Region Policy Optimization (TRPO) \cite{Schulman2015} has been
proposed as a practical method to search for step-sizes of policy
updates, and  Proximal Policy Optimization (PPO) method \cite{Schulman2017} has been proposed to compute
these step-sizes based on a clipped, ``proximal'' objective function.

We summarize the major contributions of our study:
\begin{enumerate}
\item In Section \ref{sec:countable} we theoretically justify that the advanced policy gradient algorithms can
  be applied for long-run average cost MDP problems with countable state spaces and unbounded
  cost-to-go functions. We show that starting from a stable policy it
  is possible to improve long-run average performance with
  sufficiently small changes to the initial policy.

\item In Section \ref{sec:AMP} we discuss a new way to estimate relative value
  and advantage functions if transition probabilities are known. We
  adopt the approximating martingale-process method
  \cite{Henderson2002} which, to the best of our knowledge, has not
  been used in simulation-based approximate
    policy improvement setting.
  \item  In Section \ref{sec:ge} we introduce a biased estimator
      of the relative value function through discounting the future
      costs. We interpret the discounting as the modification to the
      transition dynamics that shortens the regenerative cycles.
      We propose a regenerative estimator of the   discounted relative value function.

 The discounting  combined with the AMP method and regenerative simulation significantly reduces the variance of the relative value function estimation at the cost of a tolerable bias. The use of the proposed variance reduction techniques  speeds up the learning process of the PPO algorithm that we demonstrate by  computational experiments in Section
  \ref{sec:cc}.

\item In Section   \ref{sec:experiments} we conduct extensive computational experiments for multiclass queueing networks and parallel
  servers systems.  We propose to choose architectures of neural
  networks automatically as the size of a queueing network varies. We
  demonstrate the effectiveness of these choices as well as other
  hyperparameter choices such as  the  learning rate used in gradient
  decent. We demonstrate that the performance of control policies resulting
  from the proposed PPO algorithm outperforms other heuristics.

   \end{enumerate}



\section{Control of multiclass queueing networks}\label{sec:MQN}

In this section we formulate the   control problems
for multiclass processing networks. We first give a   formulation for the criss-cross network, which serves as an example, and then give a
formulation for a general multiclass queueing network.

   The set of real numbers is denoted by $\R$. The set of nonnegative
   integers is denoted by $\Z_+$. For a vector $a$ and a matrix $A$,
   $a^T$ and $A^T$ denote their transposes.

\subsection{The criss-cross network}
The criss-cross network has been studied in
\cite{Harrison1990},  \cite{Avram1995}, and
  \cite{Martins1996} among others. The network  depicted in Figure
  \ref{fig:cc}, which is taken from Figure~1.2 in \cite{DaiHarr2020}, consists of two stations that process three classes
of jobs. Each job class has its own dedicated buffer where jobs wait
to be processed. All buffers are assumed to have an infinite capacity.

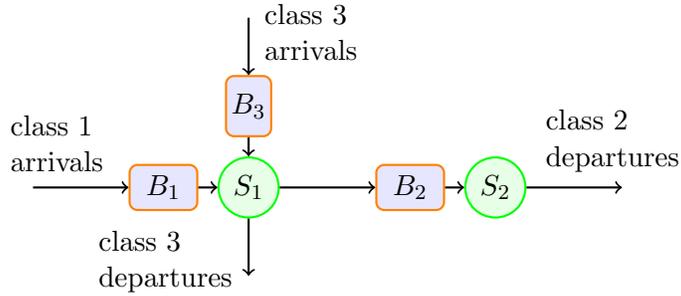
\begin{figure}[H]
  \centering
\begin{tikzpicture}  [scale=1.0,inner sep=2.0mm]
\node (T0) at (14,4) {};
\node[server] (S1) [left=1in of T0] {$S_1$};
\node[buffer] (B1) [left=.1in of S1] {$B_1$};
\node[buffer] (B2) [right=.5in of S1] {$B_2$};
\node[server] (S2) [right=.1in of B2] {$S_2$};
\node[vbuffer] (B3) [above= .1in of S1] {$B_3$};
\node (source1) [left= .5in of B1] {};
\node (sink2) [right= .5in of S2] {};
\node (sink3) [below= .3in of S1] {};
\node (source2) [above= .3in of B3] {};

\draw[thick,->] (B1) -- (S1);
\draw[thick,->] (B3) -- (S1);
\draw[thick,->] (S1) -- (B2);
\draw[thick,->] (B2) -- (S2);

\draw[thick,->] (source1) --(B1) node [above, align=left, near start]{class 1\\ arrivals};
\draw[thick,->] (source2) --(B3) node [right, align=left, near start]{class 3\\ arrivals};
\draw[thick,->] (S2) --(sink2) node [above, align=left, near end]{\mbox{\ \ } class 2 \\  \mbox{\ \ }  departures};
\draw[thick,->] (S1) --(sink3) node [left, align=left,  near end]{\ class 3 \\ \  departures};
 \end{tikzpicture}
  \caption{The criss-cross network}
  \label{fig:cc}
\end{figure}

We assume that the jobs of class 1 and class 3 arrive to the
system following Poisson processes with rates $\lambda_1$ and
 $\lambda_3$, respectively. Server 1 processes both classes one job at a time. After being processed class 1 jobs become  class 2 jobs and wait in buffer $2$ for server 2 to process. Class 2 and class 3
jobs   leave the system after their processings
are completed.  We assume that the processing times for class j jobs are i.i.d.,  having exponential
distribution with mean $m_j$, $j=1, 2, 3$. We denote $\mu_j := 1/m_j$ as
the service rate of class $j$ jobs. We assume that the following load conditions are satisfied:
\begin{align}\label{eq:load_cc}
\lambda_1m_1+\lambda_3m_3<1  ~~\text{and}~~ \lambda_1m_2<1.
\end{align}
Again, we assume each server processes one job at a time. Therefore,
processor sharing among multiple jobs is not allowed for each
server. Jobs within a buffer are processed in the first-in--first-out
order.  A service policy dictates the order in which jobs from
different classes are processed. (See below for a precise definition of a stationary Markov policy.) We assume a decision
time occurs when a new job arrives  or a service is
completed.  For concreteness, we adopt a preemptive service policy --- suppose the service policy dictates that server $1$
  processes a class $3$ job next while it is in the middle of processing
  a class $1$ job, the server preempts the unfinished class $1$ job to start the processing of the leading class $3$ job from buffer $3$. Due to the memoryless property of  an exponential distribution, it does not matter whether the preempted job keeps its remaining processing time or is assigned a new service time sampled from the original exponential distribution.

  Under any service policy,
at each decision time, the system manager needs to simultaneously choose  action $a_1$ from set $\{0, 1, 3\}$ for server $1$ and  action $a_2$ from the set $\{0, 2\}$ for server 2; for server $1$ to choose action $j$, $j=1, 3$, means that   server 1 processes a class $j$ job next (if buffer $j$ is non-empty), and to choose action $0$ means   that server 1 idles. Similarly,
for server $2$ to choose action $2$ means that server $2$ processes a class $2$ job, and to choose action $0$ means  server $2$ idles.  Each server is allowed to choose  action $0$ even if there are waiting jobs at the associated buffers.
Therefore, our service policies are not necessarily non-idling. We define the action set as $\A =\left\{ (a_1, a_2)\in  \{0, 1, 3\}\times \{0, 2\}\right\}$.

The service policy studied in this paper is \textit{randomized}.  By
randomized, we mean each server takes a random action sampled from  a
certain distribution on the action set.  For a set $A$, we use
$\mathcal{P}(A)$ to denote the set of probability distributions on
$A$. Therefore, for a pair $(p_1,p_2)\in \mathcal{P}(\{0,1,3\})\times \mathcal{P}(\{0,2\})$, server $1$ takes a random action sampled from distribution $p_1$ and server $2$ takes  a random action sampled from distribution $p_2$.  For notational convenience, we note that a pair has a one-to-one correspondence to a vector $u$ in the following set
\begin{align*}
  \mathcal{U}=\left\{u= \left(u_1, u_2, u_3\right)\in \R^3_+: u_1+u_3\le 1 \text{ and } u_2\le 1\right\},
\end{align*}
where $p_1 = (1-u_1-u_3, u_1, u_3)\in \mathcal{P}(\{0, 1, 3\})$ is a
probability distribution on the action set $\{0, 1, 3\}$ and
$p_2 = (1-u_2, u_2)\in \mathcal{P}(\{0, 2\})$ is a probability
distribution on the action set $\{0,2\}$. Throughout this subsection, we use
$u\in \mathcal{U}$ to denote  pair $(p_1, p_2)$.

To define a randomized stationary Markovian service policy, let $x_j(t)$ be the number of class $j$ jobs (including possibly one job in service) in the system  at
time $t$, $j=1,2, 3$.  Then $x(t) = \left(x_1(t), x_2(t), x_3(t)\right )$ is
  the vector of jobcounts at time $t$. Clearly, $x(t)\in \Z_+^3$. By convention, we assume the sample path of $\{x(t), t\ge 0\}$ is right continuous, which implies that when $t$ is a decision time (triggered by an external arrival  or a service completion), $x(t)$ has taken into account the  arriving job or completed job at time $t$.

By a randomized stationary Markovian service policy we denote a map
\begin{align*}
  \pi: \Z_+^3 \to \mathcal{U}.
\end{align*}
Given this map $\pi$, at each decision time $t$, the system manager
observes jobcounts $x=x(t)$, computes $\pi(x)\in \mathcal{U}$ and the
corresponding pair
$(p_1(x), p_2(x))\in \mathcal{P}(\{0, 1, 3\})\times \mathcal{P}(\{0,
2\})$. Then server $1$ chooses a random action sampled from $p_1(x)$ and
server $2$ chooses a random action sampled from $p_2(x)$. When each
distribution is concentrated on a single action, the corresponding policy is
a \emph{deterministic} stationary Markovian service policy. Hereafter, we use the term  stationary Markovian service policies to mean  randomized
policies, which include deterministic service policies as special cases.

Under a stationary Markovian policy $\pi$, $\{x(t), t\ge 0 \}$ is a
continuous time Markov chain (CTMC). Hereafter, we call  jobcount
vector $x(t)$ the state at time $t$, and we denote the state space as
$\X = \Z^3_+$.  The objective of our optimal control problem is to find
a stationary Markovian policy that minimizes the long-run average
number of jobs in the network:
\begin{equation}\label{co}
\inf_{\pi} \lim_{T\rightarrow \infty} \frac{1}{T} {\E}_\pi \int_0^T\Big(x_1(t)+x_2(t)+x_3(t)\Big)dt.
\end{equation}
 Because the interarrival and service times are exponentially
  distributed, the optimal control problem (\ref{co}) fits the
  framework of semi-Markov decision process (SMDP). See, for example,
  \cite[Chapter 11]{Puterman2005}.  Indeed, one can easily verify that
   at each state $x$, taking action $a$, the distribution of the
  time interval until next decision time is exponential with rate
  $\beta(x,a)$ to be specified below. We use $P(y|x,a)$ to denote the
  transition probabilities of the embedded Markov decision process,
  where $y\in \X$ is a state at the next decision time. For any
  state $x\in \X$ and any action $a\in \A$, the following transition
  probabilities always hold
\begin{align}
   P\big((x_1+1, x_2, x_3)|x, a\big)=\frac{\lambda_1}{\beta(x,a)}, \quad
   P\big((x_1, x_2, x_3+1)|x,a \big)=\frac{\lambda_3}{\beta(x,a)}. \label{eq:arrivals}
\end{align}
In the following, we specify $\beta(x,a)$ for each $(x,a)\in \X\times \A$ and additional
transition probabilities.  For action $a=(1,2)$ and state
$x=(x_1, x_2, x_2)$ with $x_1\ge 1$ and $x_2\ge 1$,
$\beta(x,a)=\lambda_1+\lambda_3+\mu_1+\mu_2$,
\begin{align*}
  &  P\big((x_1-1, x_2+1, x_3)|x,a \big)=\frac{\mu_1}{\beta(x,a)}, \quad
    P\big((x_1, x_2-1, x_3)|x,a\big)=\frac{\mu_2}{\beta(x,a)};
\end{align*}
for action $a=(3,2)$ and state $x=(x_1, x_2, x_2)$ with $x_3\ge 1$ and
$x_2\ge 1$, $\beta(x,a)=\lambda_1+\lambda_3+\mu_3+\mu_2$,
\begin{align*}
    P\big((x_1, x_2, x_3-1)|x,a \big)=\frac{\mu_3}{\beta(x,a)}, \quad
    P\big((x_1, x_2-1, x_3)|x,a\big)=\frac{\mu_2}{\beta(x,a)};
\end{align*}
for action $a=(0,2)$ and state $x=(x_1,x_2, x_3)$ with $x_2\ge 1$, $\beta(x,a)=\lambda_1+\lambda_3+\mu_2$,
\begin{align*}
    P\big((x_1, x_2-1, x_3)|x,a\big)=\frac{\mu_2}{\beta(x,a)};
\end{align*}
for action $a=(1,0)$ and state $x=(x_1, x_2, x_3)$ with $x_1\ge 1$, $\beta(x,a)=\lambda_1+\lambda_3+\mu_1$,
\begin{align*}
   P\big((x_1-1, x_2+1, x_3)|x,a\big)=\frac{\mu_1}{\beta(x,a)};
\end{align*}
for action $a=(3,0)$ and state $x=(x_1, x_2, x_3)$ with $x_3\ge 1$, $\beta(x,a)=\lambda_1+\lambda_3+\mu_3$,
\begin{align*}
  P\big((x_1, x_2, x_3-1)|x,a\big)=\frac{\mu_3}{\beta(x,a)};
\end{align*}
for action $a=(0,0)$ and state $x=(x_1, x_2, x_3)$,
$\beta(x,a)=\lambda_1+\lambda_3$. Also, $x_2=0$ implies that
$a_2=0$, $x_1=0$ implies that $a_1\neq 1$,  and  $x_3=0$ implies that $a_1\neq 3$.

Because the time between state transitions are exponentially
distributed, we   adopt the method of uniformization for solving the SMDP;
see, for example,  \cite{Serfozo1979} and \cite[Chapter 11]{Puterman2005}.
We denote
\begin{align}
  \label{eq:B}
B=\lambda_1+\lambda_3+\mu_1+\mu_2+\mu_3.
\end{align}
 For the new control
problem under uniformization, the decision times are determined by the
arrival times of a Poisson process with (uniform) rate $B$ that is independent of the underlying state. Given current state $x\in \X$
and action $a\in \A$, new transition probabilities into $y\in \X$ are given by
\begin{align}\label{eq:unif}
\tilde P(y|x, a) =
\begin{cases}
P(y|x, a) \beta(x, a) / B \quad \text{ if } x\neq y,\\
1 - \beta(x, a) / B  \quad \text{otherwise.}
\end{cases}
\end{align}
The transition probabilities $\tilde P$ in (\ref{eq:unif}) will define
a (discrete time) MDP. The  objective is given by
\begin{align}\label{eq:co1}
\inf\limits_{\pi} \lim\limits_{N\rightarrow \infty}\frac{1}{N}\E_{\pi}\left[ \sum\limits_{k=0}^{N-1}\left(x_1^{(k)}+x_2^{(k)}+x_3^{(k)}\right)\right],
\end{align}
where $\pi$ belongs to the family of stationary Markov policies,
and $x^{(k)} = \left(x_1^{(k)}, x_2^{(k)}, x_3^{(k)}\right)$ is the state (vector of jobcounts)
at the time of the $k$th decision (in the uniformized framework).
Under a stationary Markov policy $\pi$, $\{x^{(k)}:k=0, 1, 2, \ldots\}$ is a discrete time Markov chain (DTMC).

The existence of a stationary Markovian policy $\pi^*$ that minimizes
(\ref{eq:co1}) follows from \cite[Theorem 4.3]{Meyn1997} if the load conditions (\ref{eq:load_cc}) are satisfied. Under a mild
condition on $\pi^*$, which can be shown to be satisfied following the
argument in \cite[Theorem 4.3]{Meyn1997}, the policy $\pi^*$ is an
optimal Markovian stationary policy for (\ref{co}) \cite[Theorem
2.1]{Beutler1987}.  Moreover, under policy $\pi^*$, the objective in
(\ref{co}) is equal to  that in (\ref{eq:co1}); see
\cite[Theorem 3.6]{Beutler1987}.

\subsection{General formulation of  a  multiclass queueing network control problem}

We consider a multiclass queueing network that has $L$ stations  and $ J$  job classes.   For
notational convenience we denote $\LL = \{1, ..., L\}$ as  the set
  of stations, $\J = \{1, ...,J \}$ as  the  set of
job classes.
Each station has a single server that
processes jobs from the job classes that belong   to the station.  Each job class belongs to one  station. We use
$\ell= s(j)\in \LL$ to denote the station that class $j$ belongs to.
We assume the function $s: \J \to \LL$ satisfies $s(\J)=\LL$.
Jobs arrive externally  and are processed sequentially at various stations, moving from one class to  the next after each processing step until they exit the network.
Upon arrival if a class $j$ job finds the  associated server busy, the job waits in the corresponding buffer $j$.  We assume that every buffer has an infinite capacity.
For each station $\ell\in \LL$, we define
  \begin{align}\label{def:servers}
 \mathcal{B}(\ell) := \{j\in \J:~ s(j)  = \ell\}
 \end{align} as the set of job classes to be processed by server $\ell$.

Class $j\in \J$ jobs arrive externally to buffer $j$ following a Poisson
 process with rate $\lambda_j$; when $\lambda_j=0$, there are no external arrivals into buffer $j$.
   Class $j$ jobs are processed
 by server $s(j)$ following the service policy as specified below.
 We assume the service times for class $j$ jobs are i.i.d.  having exponential distribution with  mean $1/\mu_j$. Class $j$
 job, after being processed by server $s(j)$, becomes class $k\in \J$ job with probability $r_{jk}$ and leaves the network with probability
  $1 - \sum\limits_{k=1}^K r_{jk}$. We define $J\times J$ matrix $R := (r_{jk})_{j,k=1,..., J}$ as the routing matrix. We assume that the network is open, meaning that $I-R$ is invertible.
We let vector $q= (q_1, q_2, ..., q_J)^T$ satisfy the system of linear
equations
\begin{equation}\label{eq:traffic}
  q = \lambda +R^T q.
\end{equation}
Equation (\ref{eq:traffic}) is known as the traffic equation, and it has a unique solution under the open network assumption.
  For each class $j \in \J$,  $q_j$ is interpreted to be the total arrival rate
  into buffer $j$, considering both the  external arrivals
  and internal arrivals from service completions at stations.
We define \textit{the load} $\rho_\ell$ of station $\ell\in\LL$ as
\begin{equation*}\rho_\ell := \sum\limits_{j\in \B(\ell)} \frac{q_j}{\mu_j}.\end{equation*}
We assume that
\begin{align}\label{eq:load}
\rho_\ell<1 \quad \text{for each  station }\ell\in \LL.
\end{align}

We let $x(t) = \left( x_1(t), .., x_J(t) \right)$ be the vector of jobcounts at time $t$. A decision time occurs when a new job arrives at the system or a service is completed.   Under any service policy, at each decision time, the system manager needs to simultaneously choose an action for each server $\ell\in \LL$.
For each server $\ell\in \LL$ the system manager selects an action from set $\B(\ell)\cup \{0\}$: action $j\in \B(\ell)$ means that  the system manager gives priority to  job class $j$ at station $\ell$; action $0$ means server $\ell$ idles until the next decision time.

We define  set
\begin{align}\label{eq:setP}
  \mathcal{U}=\left\{u= \left(u_1, u_2,..., u_J\right)\in \R^J_+:     \sum\limits_{j\in \B(\ell)} u_j\le1 \text{ for each }\ell\in \LL\right\}.
\end{align}

For each station $\ell\in \LL$ vector $u\in \mathcal{U}$ defines a probability distribution $u_\ell$ on the action set  $\B(\ell)\cup \{0\}$: probability of action $j$ is equal to $u_j$ for $j\in \B(\ell)$, and  probability of action $0$ is equal to $\Big(1 - \sum\limits_{j\in \B(\ell)} u_j\Big)$.
We define a randomized stationary Markovian service policy as a map from a set of  jobcount vectors into set $\mathcal{U}$ defined in (\ref{eq:setP}):
\begin{align*}
\pi:\Z_+^J\rightarrow \mathcal{U}.\end{align*}

Given this map $\pi$, at each decision time $t$, the system manager
observes jobcounts $x(t)$, chooses $\pi(x)\in\mathcal{U}$, and based on $\pi(x)$ computes probability distribution $u_\ell$ for each $\ell\in \LL$.
Then  the system manager  independently samples  one action from $u_\ell$  for each server $\ell\in \LL$.

 The objective is to find a stationary Markovian policy  $\pi$ that   minimizes the  long-run average number of jobs in the network:
\begin{align}\label{eq:obj3}
\inf\limits_{\pi} \lim\limits_{T\rightarrow \infty} \frac{1}{T} \E_\pi \int\limits_0^T\left(\sum\limits_{j=1}^J x_j(t)\right)dt.
\end{align}

Under a stationary Markovian policy $\pi$, we adopt the method of uniformization to obtain a uniformized discrete time Markov chain (DTMC) $\{x^{(k)}:k=0, 1, \ldots \}$. We abuse the notation and denote a system state as $x^{(k)} = \left( x_1^{( k)}, x_2^{(k)}, ..., x_J^{(k)} \right)$ after $k$ transitions of the DTMC.

In this paper, we   develop  algorithms to approximately solve the following (discrete-time) MDP problem:
\begin{align}\label{eq:obj4}
 \inf\limits_\pi \lim\limits_{N\rightarrow \infty} \frac{1}{N} \E_{\pi}\left[ \sum\limits_{k=0}^{N-1}\sum\limits_{j=1}^J x_j^{(k)} \right].
\end{align}
\begin{remark}

It has been proved in \cite{Meyn1997} that the MDP (\ref{eq:obj4}) has an optimal policy
that satisfies the conditions in \cite[Theorem 3.6]{Beutler1987} if the ``fluid limit model'' under
some policy is $L_2$-stable.  Under the load condition (\ref{eq:load}),
conditions in \cite{Meyn1997} can be verified as follows. First, we adopt the randomized
version of the head-of-line static processor sharing (HLSPS) as defined in
\cite[Section 4.6]{DaiHarr2020}. We apply this  randomized  policy to the
discrete-time MDP to obtain the resulting DTMC.  The fluid limit path
of this DTMC can be shown to satisfy the fluid model defined in  \cite[Definition 8.17]{DaiHarr2020} following a procedure that is similar to,
but much simpler than, the proof of \cite[Theorem 12.24]{DaiHarr2020}. Finally, \cite[Theorem 8.18]{DaiHarr2020} shows the fluid model is stable, which is
stronger than the $L_2$-stability needed.

\end{remark}

\section{Reinforcement learning approach for queueing network control}\label{sec:countable}


 Originally, policy gradient algorithms have been developed  to find
optimal policies which optimize the finite horizon total cost or
infinite horizon discounted total cost objectives. For
  stochastic processing networks and their applications, it is often
   useful  to optimize the long-run average cost. In this section
we develop a version of the  Proximal Policy Optimization algorithm.  See Section 5, which demonstrates the effectiveness of our proposed PPO for finding the  near optimal control policies for stochastic processing networks.

\subsection{Positive recurrence and $\V$-uniform ergodicity}\label{sec:MC}
As discussed in Section~\ref{sec:MQN}, operating under a fixed
randomized stationary control policy, the dynamics of a stochastic
processing network is a DTMC. We restrict policies so that the
resulting  DTMCs are  irreducible and aperiodic. Such a DTMC does not
always have a stationary distribution.  When the DTMC does not have a
stationary distribution, the performance of the control policy is
necessarily poor, leading to an infinite long-run average
cost. It is well known that an irreducible DTMC has a unique stationary distribution if and only if it is positive recurrent.
Hereafter, when the DTMC is positive recurrent, we call
the corresponding control policy \emph{stable}. Otherwise, we call it
\emph{unstable.}  

A sufficient condition for an irreducible DTMC to be positive recurrent is
the Foster-Lyapunov drift condition.  The drift condition
(\ref{eq:drift}) in the following lemma is stronger than the classical
Foster-Lyapunov drift condition.
For a proof of the lemma, see  Theorem 11.3.4 and Theorem 14.3.7 in \cite{Meyn2009}.

\begin{lemma}\label{lem:drift}
Consider an irreducible Markov chain on a countable state space $\X$ with a transition matrix $P$ on $\X\times \X$.  Assume   there exists a vector $\V:\X\rightarrow [1, \infty)$ such that the following drift condition holds for some constants  $b\in (0,1)$ and $d\geq0$, and a finite subset $C\subset \X$:
\begin{align}\label{eq:drift}
\sum\limits_{y\in \X}P(y|x)\V(y)\leq b \V(x) +d\I_{C}(x), \quad \text{for each }x\in \X,
\end{align}
where $\I_{C}(x)=1$ if $x\in C$ and $\I_{C}(x)=0$ otherwise. Here,
$P(y|x)=P(x,y)$ is the transition probability from state $x\in \X$ to
state $y\in \X$. Then
(a)  the Markov chain with the transition matrix  $P$ is positive recurrent with a unique stationary distribution  $\mu$;  and  (b)
$\mu^T\V <\infty,
$
where  for any function $f: \X \rightarrow \R$  we define $\mu^T f$ as
\begin{align*}
\mu^Tf:=\sum\limits_{x\in \X} \mu(x)f(x).
\end{align*}

\end{lemma}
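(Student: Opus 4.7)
The plan is to split the statement into claims (a) and (b), reducing (a) to the classical Foster criterion and (b) to a direct integration of the drift inequality against $\mu$.

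For (a), the first observation is that the geometric drift hypothesis $b<1$ combined with $\V(x)\ge 1$ immediately strengthens into a classical Foster drift. Indeed, for $x\notin C$,
\begin{align*}
\sum_{y\in\X} P(y|x)\V(y) \;\le\; b\V(x) \;=\; \V(x) - (1-b)\V(x) \;\le\; \V(x) - (1-b),
\end{align*}
so $\V$ has uniform negative drift $(1-b)>0$ outside the finite set $C$. Together with irreducibility on a countable state space, Foster's theorem delivers positive recurrence, and irreducibility of a positive recurrent chain gives uniqueness of the stationary distribution $\mu$.

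For (b), I would integrate the drift inequality against $\mu$ and exploit stationarity. Using Tonelli on non-negative summands to swap the order of summation,
\begin{align*}
\sum_{x\in\X}\mu(x)\sum_{y\in\X}P(y|x)\V(y) \;=\; \sum_{y\in\X}\V(y)\sum_{x\in\X}\mu(x)P(y|x) \;=\; \sum_{y\in\X}\mu(y)\V(y) \;=\; \mu^T\V.
\end{align*}
The right-hand side of the drift condition integrates to $b\,\mu^T\V + d\,\mu(C)$, so rearranging formally yields $(1-b)\,\mu^T\V \le d\,\mu(C) \le d$.

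The step I expect to be the main obstacle is justifying this rearrangement: if $\mu^T\V$ were a priori infinite, the bound $\infty \le b\cdot\infty + d\mu(C)$ is vacuous. To close the gap I would either (i) iterate the drift to obtain the uniform-in-$n$ bound $P^n\V(x) \le b^n\V(x) + d/(1-b)$, and then use the regenerative representation of $\mu$ as a normalized expected occupation measure over a cycle returning to the finite set $C$, starting from a state in $C$ where $\V$ is finite; or (ii) truncate $\V$ to $\V_N(x) := \min(\V(x),N)$, apply the chain of equalities and inequalities to $\V_N$ (using $P\V_N \le P\V \le b\V + d\I_C$), and pass to the limit $N\to\infty$ by monotone convergence. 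Either route produces the finite bound $\mu^T\V \le d/(1-b) < \infty$, and both are the standard arguments formalized in the Meyn--Tweedie treatment (Theorems 11.3.4 and 14.3.7 of \cite{Meyn2009}) that the lemma cites.
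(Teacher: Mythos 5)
The paper itself never proves this lemma---it simply cites Theorems 11.3.4 and 14.3.7 of Meyn and Tweedie---so any self-contained argument is by construction a different route. Your part (a) is correct: the geometric drift with $\V\ge 1$ forces a uniform negative drift $-(1-b)$ off the finite set $C$, while $\sum_y P(y|x)\V(y)\le b\V(x)+d<\infty$ for $x\in C$, so Foster's criterion gives positive recurrence and irreducibility gives uniqueness of $\mu$. Your part (b) also begins correctly, and you rightly identify the real issue: the rearrangement $(1-b)\mu^T\V\le d\mu(C)$ is vacuous unless $\mu^T\V<\infty$ is known in advance.

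The genuine gap is in your patch (ii): the truncation as written does not close the circle. Integrating $P\V_N\le P\V\le b\V+d\I_C$ against $\mu$ yields $\mu^T\V_N\le b\,\mu^T\V+d\mu(C)$, whose right-hand side still contains the possibly infinite $\mu^T\V$, and monotone convergence then returns exactly the vacuous inequality you set out to avoid. The sharper truncated drift $P\V_N\le b\V_N+d\I_C+(1-b)N\I_{\{\V>N\}}$ keeps every term finite but leaves the error $N\mu(\V>N)$, which you cannot send to zero without already knowing $\mu^T\V<\infty$; so route (ii) fails. Route (i) can be completed, though not quite as stated, since the iterate bound $P^n\V(x)\le b^n\V(x)+d/(1-b)$ does not feed directly into a cycle representation. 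Either combine that bound with Fatou's lemma and the Cesàro convergence $\frac{1}{n}\sum_{k=1}^{n}P^k(x^*,y)\to\mu(y)$ to get $\mu^T\V\le\liminf_n\frac{1}{n}\sum_{k=1}^{n}P^k\V(x^*)\le d/(1-b)$, or drop the iterate bound and instead use the comparison (Dynkin) estimate $\E_x\big[\sum_{k=0}^{\tau_C-1}\V(x^{(k)})\big]\le(\V(x)+d)/(1-b)$ together with the Kac occupation-measure representation of $\mu$ over cycles through the finite set $C$---the latter being essentially the Meyn--Tweedie argument the lemma cites.
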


Vector $\V$ in the drift condition (\ref{eq:drift}) is called a
\textit{Lyapunov function} for the Markov chain.
For any matrix $M$ on $\X\times \X$, its $\V$-norm is defined to be
 \begin{align*}
   \|M\|_\V^{ } = \sup\limits_{x\in \X} \frac{1}{\V(x)}\sum\limits_{y\in \X}|M(x, y)| \V(y).
\end{align*}

An irreducible, aperiodic Markov chain with transition matrix $P$ is
called \textit{$\V$-uniformly ergodic} if
\begin{align*}\|P^n - \Pi\|_\V^{ }\rightarrow 0 \text{ as } n\rightarrow
  \infty,\end{align*} where every row of $ \Pi$  is equal  to the
stationary distribution $\mu$, i.e. $\Pi(x, y): = \mu(y),$ for any
$x, y\in \X$.  The drift condition (\ref{eq:drift}) is sufficient and
necessary for an irreducible, aperiodic Markov chain to be
$\V$-uniformly ergodic \cite[Theorem 16.0.1]{Meyn2009}.  For an
irreducible, aperiodic Markov chain that satisfies (\ref{eq:drift}),
for any $g:\X\to \R$ with $\abs{g(x)}\le \V(x)$ for $x\in \X$, there
exist constants $R<\infty$ and $r<1$ such that
\begin{align}\label{eq:geo}
\left| \sum\limits_{y\in \X} P^n(y|x) g(y) - \mu^T g  \right| \leq R\V(x) r^n
\end{align}
for any $x\in \X$ and $n\geq 0$; see   \cite[Theorem 15.4.1]{Meyn2009}.

\subsection{Poisson equation}\label{sec:PO}
For an irreducible DTMC on state space $\X$ (possibly infinite) with transition matrix $P$, we
assume that there exists a Lyapunov function $\V$ satisfying (\ref{eq:drift}).
For any cost function $g:\X \rightarrow \R$ satisfying $\abs{g(x)}\le \V(x)$ for each $x\in \X$, it follows from Lemma~\ref{lem:drift} that $\mu^T |g|<\infty$.
Lemma~\ref{lem:poisson_sol} below asserts that the following equation
has a solution $h:\X\to\R$:
\begin{align}\label{eq:Poisson}
g(x) - \mu^T g + \sum\limits_{y\in \X}P(y|x) h(y) - h(x) =0 \quad \text{ for each }x\in \X.
\end{align}
Equation (\ref{eq:Poisson}) is called  a \textit{Poisson equation} of the Markov chain with transition matrix $P$ and cost function $g $. Function $h:\X\rightarrow \R$ that satisfies (\ref{eq:Poisson}) is called a \textit{solution} to the Poisson equation.  The solution is unique up to a constant shift, namely,
if $h_1$ and $h_2$ are two solutions to Poisson equation (\ref{eq:Poisson}) with $\mu^T(|h_1|+|h_2|)<\infty$, then there exists a constant $b\in \R$ such that $h_1(x) = h_2(x) +b$ for each $x\in \X$, see \cite[Proposition 17.4.1]{Meyn2009}.

A solution $h$ to the Poisson equation is called a \textit{fundamental solution} if   $ \mu^T h =0.$    The proof of the following lemma is provided in \cite[Proposition A.3.11]{Meyn2007}.

\begin{lemma}\label{lem:poisson_sol}

Consider a $\V$-uniformly ergodic  Markov chain with transition matrix $P$ and the stationary  distribution $\mu$.
 For any cost function $g:\X \rightarrow \R$ satisfying $|g|\leq \V$,  Poisson equation (\ref{eq:Poisson}) admits a fundamental  solution
\begin{align}\label{eq:h}
h^{(f)}(x) : = \E \left[\sum\limits_{k=0}^\infty \left(g(x^{(k)}) - \mu^Tg\right)~|~x^{(0)} = x\right] ~\text{for each }x\in \X,
\end{align}
where $x^{(k)}$ is the state of the Markov chain after $k$ timesteps.

\end{lemma}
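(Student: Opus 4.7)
The plan is to define $h^{(f)}$ by the series (\ref{eq:h}), first establish absolute convergence and a $\V$-bound, then verify the Poisson equation via a one-step shift, and finally check the centering $\mu^T h^{(f)}=0$.

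First, I would check that the series defining $h^{(f)}$ converges absolutely with a $\V$-bound. By hypothesis $|g|\le \V$, and by $\V$-uniform ergodicity the geometric bound (\ref{eq:geo}) gives constants $R<\infty$ and $r\in(0,1)$ with
\begin{align*}
\left| \E\!\left[g(x^{(k)})\mid x^{(0)}=x\right] - \mu^T g \right| \;=\; \left|\sum_{y\in\X} P^k(y|x)g(y)-\mu^T g\right| \;\le\; R\,\V(x)\,r^k .
\end{align*}
Summing a geometric series shows that $h^{(f)}(x)$ is well-defined with $|h^{(f)}(x)|\le R\V(x)/(1-r)$, hence $|h^{(f)}|\le C\V$ for $C=R/(1-r)$. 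In particular $\mu^T|h^{(f)}|\le C\mu^T\V<\infty$ by Lemma~\ref{lem:drift}(b), so $h^{(f)}$ is $\mu$-integrable.

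Next, I would verify the Poisson equation by a one-step decomposition. Conditioning on $x^{(1)}=y$ and using the Markov property,
\begin{align*}
\sum_{y\in \X} P(y|x)\,h^{(f)}(y) \;=\; \sum_{y\in\X} P(y|x)\,\E\!\left[\sum_{k=0}^{\infty}\bigl(g(x^{(k)})-\mu^T g\bigr) \,\Big|\, x^{(0)}=y\right] \;=\; \E\!\left[\sum_{k=1}^{\infty}\bigl(g(x^{(k)})-\mu^T g\bigr)\,\Big|\, x^{(0)}=x\right].
\end{align*}
The interchange of $\sum_y P(y|x)$ with the infinite sum over $k$ is justified by Fubini/Tonelli using the uniform bound $R\V(y)r^k$ on each summand together with $\sum_y P(y|x)\V(y)\le b\V(x)+d<\infty$ from the drift condition (\ref{eq:drift}). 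Peeling off the $k=0$ term of $h^{(f)}(x)$ then gives
\begin{align*}
\sum_{y\in\X} P(y|x)\,h^{(f)}(y) \;=\; h^{(f)}(x) - \bigl(g(x)-\mu^T g\bigr),
\end{align*}
which is exactly (\ref{eq:Poisson}).

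Finally, I would verify $\mu^T h^{(f)}=0$. Using stationarity, for each $k\ge 0$ we have $\sum_x \mu(x)\E[g(x^{(k)})\mid x^{(0)}=x]=\mu^T g$, so every term in the $k$-sum is zero. To interchange $\sum_x \mu(x)$ with the series over $k$, I would again invoke Fubini using the bound $R\V(x)r^k$ on the absolute value of the $k$-th term and the finiteness $\mu^T\V<\infty$. The main obstacle in the argument is precisely this bookkeeping of interchanges: the series is only absolutely summable after the geometric tail bound (\ref{eq:geo}) is invoked, and the $\V$-bound must be carried through each interchange so that dominated convergence / Tonelli applies. Once that is in place, (a) convergence, (b) the Poisson equation, and (c) the centering condition all follow cleanly.
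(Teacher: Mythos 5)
Your argument is correct, and it is worth noting that the paper itself does not prove this lemma at all: it simply cites \cite[Proposition A.3.11]{Meyn2007}. Your direct verification --- absolute summability of $\sum_k\bigl(P^kg(x)-\mu^Tg\bigr)$ from the geometric bound (\ref{eq:geo}), the one-step shift with Fubini justified by the drift inequality (\ref{eq:drift}), and the centering $\mu^Th^{(f)}=0$ from stationarity together with $\mu^T\V<\infty$ (Lemma~\ref{lem:drift}(b)) --- is exactly the standard argument that underlies the cited result, and it is the same mechanism the paper later uses through the fundamental matrix $Z=\sum_{k\ge0}(P-\Pi)^k$ of (\ref{eq:Zs}) and Lemma~\ref{lem:Zeq}; indeed your $h^{(f)}$ is precisely $Z\bigl(g-(\mu^Tg)e\bigr)$, so the self-contained route buys a proof while the paper's route buys brevity by outsourcing the $\V$-norm convergence to \cite{Meyn2007}. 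One small point of care: as written in (\ref{eq:h}) the expectation sits outside the infinite sum, but the random series $\sum_k\bigl(g(x^{(k)})-\mu^Tg\bigr)$ need not converge almost surely (its partial sums have fluctuations of order $\sqrt{n}$ in general), so the definition must be read --- as you implicitly do --- as the sum of expectations $\sum_k\bigl(\E[g(x^{(k)})\mid x^{(0)}=x]-\mu^Tg\bigr)$; with that reading, your bound $|h^{(f)}|\le R\V/(1-r)$ and the two Fubini interchanges are all the bookkeeping that is needed, and the proof is complete.
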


We  define   \textit{fundamental matrix} $Z$ of the Markov chain with transition kernel $P$ as 
 \begin{align}\label{eq:Zs}Z :=\sum\limits_{k=0}^\infty \left( P-\Pi \right)^k. \end{align}
 It follows from  \cite[Theorem 16.1.2]{Meyn2009} that the series (\ref{eq:Zs}) converges in $\V$-norm and, moreover,  $\|Z\|_\V<\infty$.
Then, it is easy to see that fundamental matrix $Z$ is an inverse matrix of $(I-P+\Pi)$, i.e. $Z(I-P+\Pi) = (I-P+\Pi)Z = I$.
See Appendix Section \ref{sec:proofs} for the proof of the following Lemma \ref{lem:Zeq}.
 \begin{lemma}\label{lem:Zeq}
Fundamental matrix $Z$   maps any cost function $|g|\leq \V$ into a corresponding fundamental solution $h^{(f)}$ defined by (\ref{eq:h}):
 \begin{align}h^{(f)} = Z\left(g -(\mu^T g) e\right) ,\end{align}
 where $e = (1, 1,.., 1, ...)^T$ is a unit vector.
 \end{lemma}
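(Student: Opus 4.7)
The plan is to verify the identity by computing $Z\bar{g}$ directly, where $\bar{g} := g - (\mu^T g)e$, and recognizing the result as the series representation (\ref{eq:h}) of $h^{(f)}$. The first step is to observe that $\mu^T\bar{g} = \mu^T g - (\mu^T g)(\mu^T e) = 0$, since $\mu$ is a probability distribution. This immediately yields $\Pi\bar{g} = 0$, because $(\Pi\bar{g})(x) = \sum_{y\in\X}\mu(y)\bar{g}(y) = \mu^T\bar{g} = 0$ for every $x\in\X$.

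The second step is the algebraic identity $(P-\Pi)^k\bar{g} = P^k\bar{g}$ for all $k\geq 0$, proved by induction. The base case $k=0$ is trivial, and for the inductive step I would use the elementary facts $\Pi P = \Pi$ and $\Pi\bar{g} = 0$ to write
\begin{align*}
(P-\Pi)^{k+1}\bar{g} = (P-\Pi)\bigl(P^k\bar{g}\bigr) = P^{k+1}\bar{g} - \Pi P^k\bar{g} = P^{k+1}\bar{g},
\end{align*}
since $\Pi P^k = \Pi$ and hence $\Pi P^k\bar{g} = \Pi\bar{g} = 0$. Summing over $k$, this gives formally
\begin{align*}
Z\bar{g} = \sum_{k=0}^\infty (P-\Pi)^k\bar{g} = \sum_{k=0}^\infty P^k\bar{g}.
\end{align*}

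The third step is to justify the interchange of summation and expectation to recognize the right-hand side as $h^{(f)}$. Since $|g|\le\V$, we have $|\bar{g}|\le\V + |\mu^T g| \le 2\V$, so the geometric bound (\ref{eq:geo}) applied to $\bar{g}/2$ (which satisfies $|\bar{g}/2|\le\V$ and $\mu^T(\bar{g}/2)=0$) yields $|P^k\bar{g}(x)|\le 2R\V(x)r^k$ for some $R<\infty$ and $r<1$. This gives absolute convergence of the pointwise series, and furthermore shows that $\E\bigl[\sum_{k=0}^\infty|g(x^{(k)})-\mu^T g|\,\big|\,x^{(0)}=x\bigr] < \infty$, which legitimizes Fubini's theorem. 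Hence
\begin{align*}
Z\bar{g}(x) = \sum_{k=0}^\infty \E\bigl[\bar{g}(x^{(k)})\mid x^{(0)}=x\bigr] = \E\Bigl[\sum_{k=0}^\infty\bigl(g(x^{(k)}) - \mu^T g\bigr)\,\Big|\,x^{(0)}=x\Bigr] = h^{(f)}(x),
\end{align*}
as required. The only subtlety is the convergence justification, and this is handled entirely by the $\V$-uniform ergodicity established in Section \ref{sec:MC} together with the statement $\|Z\|_\V<\infty$ already invoked before the lemma; none of the steps present a genuine obstacle.
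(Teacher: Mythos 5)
Your overall route is different from the paper's and is mostly sound, but one step is genuinely wrong as written. The paper never touches the series representation directly: it sets $h:=Z\left(g-(\mu^Tg)e\right)$, uses $\|Z\|_\V<\infty$ and $Z(I-P+\Pi)=(I-P+\Pi)Z=I$ to get $(I-P+\Pi)h=g-(\mu^Tg)e$, multiplies by $\mu^T$ to conclude $\mu^Th=0$ (hence $\Pi h=0$), so $h$ solves the Poisson equation with zero mean, and then identifies $h=h^{(f)}$ by Lemma~\ref{lem:poisson_sol} together with uniqueness of solutions up to an additive constant. Your first two steps ($\Pi\bar g=0$, the induction $(P-\Pi)^k\bar g=P^k\bar g$ using $\Pi P^k=\Pi$, and the resulting identity $Z\bar g=\sum_k P^k\bar g$ pointwise, which is legitimate since $\|\bar g\|_{\infty,\V}<\infty$ and the series for $Z$ converges in $\V$-norm) constitute a correct, more computational alternative; they also implicitly use associativity of these infinite-matrix products, which the paper's remark after the lemma covers via the finite $\V$-norm condition, so you should at least cite that.

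The flaw is in step three. The bound from (\ref{eq:geo}) controls the \emph{signed} quantities $\left|\sum_y P^k(y|x)\bar g(y)\right|=\left|\E\left[\bar g(x^{(k)})\mid x^{(0)}=x\right]\right|\le R\V(x)r^k$; it says nothing about $\E\left[\left|\bar g(x^{(k)})\right|\mid x^{(0)}=x\right]$. In fact $\E\left[\left|\bar g(x^{(k)})\right|\mid x^{(0)}=x\right]\to\mu^T|\bar g|>0$ by ergodicity (unless $g$ is $\mu$-a.e.\ constant), so $\E\left[\sum_{k}\left|g(x^{(k)})-\mu^Tg\right|\mid x^{(0)}=x\right]=\infty$ and the absolute-convergence hypothesis of Fubini fails; indeed the random series $\sum_k\bar g(x^{(k)})$ need not converge almost surely at all. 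The geometric decay you see comes entirely from cancellation inside the expectation. The correct way to finish along your lines is not Fubini but interpretation: the expression (\ref{eq:h}) (as in \cite[Proposition A.3.11]{Meyn2007}) is to be read as $\lim_{n\to\infty}\E\left[\sum_{k=0}^{n-1}\bigl(g(x^{(k)})-\mu^Tg\bigr)\mid x^{(0)}=x\right]=\sum_{k=0}^\infty P^k\bar g(x)$, which is exactly what your computation produces; alternatively, follow the paper and reduce the identification to Lemma~\ref{lem:poisson_sol} by checking the Poisson equation and $\mu^Th=0$, which avoids the interchange question entirely.
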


 \begin{remark}
  Consider matrices $A, B,C$ on $\X\times \X$.
The   associativity property,
\begin{align*}
 ABC =(AB)C= A(BC),
 \end{align*}
does not always hold for matrices defined on  a  countable state space; see a counterexample  in \cite[Section 1.1]{Kemeny1976}. However, if  $\|A\|_\V<\infty, \|B\|_\V<\infty, \|C\|_\V<\infty$ then matrices $A, B, C$ associate, see \cite[Lemma 2.1]{Jiang2017}.
Hence,  there is no ambiguity in the definition of the fundamental matrix (\ref{eq:Zs}):
\begin{align*}
\left( P-\Pi \right)^k =\left( P-\Pi \right) \left( P-\Pi \right)^{k-1}=\left( P-\Pi \right)^{k-1}\left( P-\Pi \right),~~\text{for }k\geq 1
 \end{align*}
where $\|P-\Pi\|_\V<\infty$ holds due to the drift condition (\ref{eq:drift}).
\end{remark}

 \subsection{Improvement guarantee for average cost objective}\label{sec:TRPOforAC}

 We consider an  MDP problem with a countable state space $\X$, finite action space $\A$, one-step cost function $g(x)$, and  transition function $P(\cdot|x, a)$. For each  state-action pair $(x, a)$, we assume  that the chain can transit to a finite number of distinguished states, i.e. set $\{y\in \X: P(y| x, a)>0\}$ is finite for each $(x, a)\in \X\times \A.$

 We consider  $\Theta\subset \R^d$ for some integer $d>0$ and
  $\Theta$ is open.  With every $\theta\in \Theta,$ we associate a
randomized Markovian policy $\pi_{\theta}$, which at any state
$x\in \X$ chooses action $a\in \A$ with probability
$\pi_\theta(a|x)$. Under   policy $\pi_{\theta}$, the corresponding DTMC has transition matrix $P_{\theta}$ given by
\begin{align*} P_\theta(x, y) = \sum\limits_{a\in \A} \pi_\theta(a|x)P(y|x, a) \text{ for } x, y\in \X.
\end{align*}
For each $\theta\in \Theta$ we assume that the resulting Markov chain with transition probabilities $P_{\theta}$ is irreducible and aperiodic.

We also assume there exists $\eta \in \Theta$ such that the drift
condition (\ref{eq:drift}) is satisfied for the transition matrix
$P_\eta$ with a Lyapunov function $\V:\X\rightarrow [1, \infty).$ By
Lemma \ref{lem:poisson_sol} the corresponding fundamental matrix
$Z_\eta$ is well-defined. The following lemma says that if $P_{\eta}$ is positive
recurrent and $P_\theta$ is ``close'' to $P_\eta$, then $P_\theta$ is
also positive recurrent.
See Appendix Section \ref{sec:proofs} for the proof.

%
%

\begin{lemma}\label{lem:st}
  Fix an $\eta \in \Theta$.
  We assume  that drift condition (\ref{eq:drift}) holds for $P_\eta$. Let some $\theta\in \Theta$ satisfies,
\begin{align*}
\| (P_{\theta} - P_{\eta}) Z_{\eta}\|_\V^{ } < 1 , \end{align*}
then the Markov chain with transition matrix $P_\theta$ has a unique stationary distribution $\mu_{\theta}$.
\end{lemma}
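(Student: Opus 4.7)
The plan is to construct the candidate stationary distribution $\mu_\theta$ explicitly via a Neumann series, and then verify positivity (and the implied uniqueness) by establishing a Foster--Lyapunov drift condition for $P_\theta$ that lets me invoke Lemma \ref{lem:drift}.

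I first set $K := (P_\theta - P_\eta) Z_\eta$, which by hypothesis satisfies $\|K\|_\V < 1$. The Neumann series $T := \sum_{k=0}^\infty K^k$ therefore converges in operator $\V$-norm, and I take $\mu_\theta^T := \mu_\eta^T T$ as the candidate stationary measure. Two facts are immediate. Since $P_\theta e = P_\eta e = e$ and $Z_\eta e = e$, one has $Ke = 0$, so $Te = e$ and $\mu_\theta^T e = \mu_\eta^T e = 1$. For invariance, from $Z_\eta(I - P_\eta) = I - \Pi_\eta$ (a consequence of $Z_\eta = (I - P_\eta + \Pi_\eta)^{-1}$) combined with $(P_\theta - P_\eta)\Pi_\eta = 0$, I obtain the key identity $K(I - P_\eta) = P_\theta - P_\eta$. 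Telescoping then yields
\begin{equation*}
\mu_\theta^T(I - P_\theta) = \mu_\eta^T \sum_{k=0}^\infty K^k \bigl[(I - P_\eta) - (P_\theta - P_\eta)\bigr] = \mu_\eta^T (I - P_\eta) = 0,
\end{equation*}
so $\mu_\theta$ is a $P_\theta$-invariant signed measure of total mass one.

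To upgrade $\mu_\theta$ to a genuine positive probability distribution and obtain uniqueness, I verify a geometric drift condition for $P_\theta$. Using $P_\eta Z_\eta = Z_\eta - I + \Pi_\eta$, applying both sides to $\V$ and adding the perturbation term gives $P_\theta(Z_\eta \V) = Z_\eta \V - \V + \mu_\eta^T \V \cdot e + K\V$, so that
\begin{equation*}
P_\theta(Z_\eta \V) \leq Z_\eta \V - (1 - \|K\|_\V)\,\V + \mu_\eta^T \V \cdot e.
\end{equation*}
Since $|Z_\eta \V| \leq \|Z_\eta\|_\V\, \V$, the shifted function $W := Z_\eta \V + \alpha \V$ with $\alpha > \|Z_\eta\|_\V$ satisfies $W \geq \V \geq 1$. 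Combining the estimate above with a bound on $P_\theta \V$, obtained by writing $(P_\theta - P_\eta) = K(I - P_\eta + \Pi_\eta)$, using $Ke = 0$, and the drift of $P_\eta$ (which together give $\|(P_\theta - P_\eta)\|_\V \leq \|K\|_\V(1 + b + d)$), should produce a drift of the form $P_\theta W \leq b' W + d' \I_{C'}$ with $b' \in (0,1)$, $d' < \infty$, and a finite set $C' \subset \X$, for appropriately tuned $\alpha$ and $b'$. Lemma \ref{lem:drift} applied to $P_\theta$ with Lyapunov $W$ then gives a unique positive stationary distribution, which by uniqueness must coincide with the $\mu_\theta$ constructed via the Neumann series.

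I expect the main obstacle to lie in the final balancing step in the drift inequality: the contraction $-(1 - \|K\|_\V)\V$ from the fundamental-matrix identity must dominate the positive term $\alpha\, P_\theta \V$ that the shift introduces (and $\alpha$ cannot be taken arbitrarily small because $W \geq 1$ forces $\alpha > \|Z_\eta\|_\V$). Making this trade-off work uses both the finiteness of $\|Z_\eta\|_\V$ furnished by $\V$-uniform ergodicity of $P_\eta$ and the derived bound $\|(P_\theta - P_\eta)\|_\V \leq \|K\|_\V(1 + b + d)$. Should the simple linear Lyapunov $Z_\eta \V + \alpha \V$ prove insufficient in the regime where $\|K\|_\V$ is close to one, I would instead work with an iterated drift using $P_\theta^n(Z_\eta \V)$ for a suitable $n$, absorbing the perturbation via higher powers of $K$ whose $\V$-norms contract geometrically.
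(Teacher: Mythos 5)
You reproduce the paper's own construction in your first two paragraphs: with $K=(P_\theta-P_\eta)Z_\eta$ and $T=\sum_{k\ge 0}K^k$, the candidate $\mu_\theta^T=\mu_\eta^T T$, the unit-mass check via $Z_\eta e=e$ and $Ke=0$, and the invariance check via the identity $K(I-P_\eta)=P_\theta-P_\eta$ are exactly the steps of the paper's proof (there written with $U_{\theta,\eta}$, $H_{\theta,\eta}$ and the identity $P_\theta-P_\eta+U_{\theta,\eta}P_\eta=U_{\theta,\eta}$, justified by the same $Z_\eta$ algebra and $(P_\theta-P_\eta)\Pi_\eta=0$). Where you part ways is the final step. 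The paper establishes no drift condition for $P_\theta$; it concludes directly from irreducibility that the constructed invariant measure is the unique stationary distribution. The quick way to make that rigorous, which your construction already supports, is: $\|\mu_\theta\|_{1,\V}\le\|\mu_\eta\|_{1,\V}\,\|T\|_\V<\infty$, so $\mu_\theta$ is absolutely summable; if the irreducible chain $P_\theta$ were transient or null recurrent, then $P_\theta^n(x,y)\to 0$ for all $x,y$, and letting $n\to\infty$ in $\mu_\theta^T=\mu_\theta^T P_\theta^n$ (dominated convergence) would force $\mu_\theta\equiv 0$, contradicting $\mu_\theta^Te=1$; hence $P_\theta$ is positive recurrent and the unique stationary distribution exists.

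Your substitute for this last step --- verifying (\ref{eq:drift}) for $P_\theta$ with $W=Z_\eta\V+\alpha\V$ --- is where the genuine gap lies, and you flag it yourself. Combining your two estimates gives
\[
P_\theta W\;\le\;W-\Bigl[\alpha\bigl(1-b-\|K\|_\V(1+b+d)\bigr)+\bigl(1-\|K\|_\V\bigr)\Bigr]\V+(\mu_\eta^T\V)\,e+\alpha d\,\I_C .
\]
Since you are forced to take $\alpha>\|Z_\eta\|_\V$ to ensure $W\ge 1$, the bracket cannot be made positive uniformly over the hypothesis: as $\|K\|_\V\uparrow 1$ it tends to $-\alpha(2b+d)<0$, so for $\|K\|_\V$ near $1$ your estimate produces no negative drift at all, while the lemma must cover every $\|K\|_\V<1$. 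Moreover, even when the bracket is positive, the constant $(\mu_\eta^T\V)$ sits on all of $\X$ rather than on a finite set; converting the inequality into the form (\ref{eq:drift}) would additionally require $\V$ to have finite sublevel sets, which is not among the assumptions. The ``iterated drift'' fallback is not worked out, so as written the positivity/uniqueness portion of your argument is incomplete --- though it is easily repaired, and in fact made unnecessary, by the irreducibility argument above, which is precisely how the paper closes the proof.
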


Assume the assumption of Lemma~\ref{lem:st} holds.
For any cost function $|g|\le \V$, we denote
the corresponding fundamental solution to the Poisson equation as
$h_\eta$ and the long-run average cost
\begin{align}\label{eq:ac}
 \mu_\eta^Tg  := \sum\limits_{x\in \X} \mu_\eta(x) g(x).
\end{align}

 The following theorem provides a bound on the difference of long-run average performance of policies $\pi_\theta$ and $\pi_{\eta}.$ See  Appendix Section \ref{sec:proofs} for the proof.

\begin{theorem}
\label{thm:main}
Suppose that the Markov chain with transition matrix $P_{\eta}$ is  an irreducible  chain such that the drift condition (\ref{eq:drift}) holds for some function $\V\geq1$ and the cost function satisfies $|g|<\V$.

 For any $\theta\in \Theta$ such that
\begin{align}\label{eq:D}
 D_{\theta,\eta} : = \|  (P_{\theta} - P_{\eta}) Z_{\eta}\|_\V^{ } < 1
\end{align}
the difference of long-run average costs of policies $\pi_\theta$ and $\pi_{\eta}$ is bounded by:
   \begin{align}\label{eq:ineq2}
\mu_\theta^Tg- \mu_\eta^Tg ~\leq~ &N_1(\theta, \eta)+  N_2(\theta, \eta),
\end{align}
where $N_1(\theta, \eta)$, $N_2(\theta, \eta)$ are finite and equal to
\begin{align}
N_1(\theta, \eta) &:= \mu_{\eta}^T( g -(\mu_\eta^Tg) e   +P_{\theta}h_{\eta} - h_{\eta} ), \label{eq:M1} \\
N_2(\theta, \eta) &:= \frac{ D_{\theta,\eta}^2}{1- D_{\theta, \eta}}  \left\| g - (\mu_\eta^Tg) e  \right\|^{ }_{\infty, \V}(\mu_{\eta}^T\V), \label{eq:M2}
\end{align}
where,  for a vector $\nu$ on $\X$, $\V$-norm is defined as
\begin{align}\label{eq:Vnorm}
 \|\nu\|_{\infty, \V} := \sup\limits_{x\in \X} \frac{| \nu(x)|}{\V(x)}.
\end{align}

\end{theorem}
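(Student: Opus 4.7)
The plan is to derive a performance-difference identity from the Poisson equation for $\pi_\eta$, split it into the main contribution $N_1$ and a second-order perturbation residual, then control the residual through a Neumann-series expansion of the difference $\mu_\theta - \mu_\eta$ of stationary distributions.

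First, I would rewrite the Poisson equation \eqref{eq:Poisson} for $\pi_\eta$ as $g - (\mu_\eta^T g)\, e = h_\eta - P_\eta h_\eta$, multiply on the left by $\mu_\theta^T$ (the stationary distribution of $\pi_\theta$, whose existence is guaranteed by Lemma~\ref{lem:st}), and use $\mu_\theta^T P_\theta = \mu_\theta^T$ to obtain the exact identity $\mu_\theta^T g - \mu_\eta^T g = \mu_\theta^T (P_\theta - P_\eta) h_\eta$. Decomposing $\mu_\theta^T = \mu_\eta^T + (\mu_\theta - \mu_\eta)^T$ splits the right-hand side into two pieces: the $\mu_\eta^T$ piece simplifies to $N_1(\theta,\eta)$ after re-substituting the Poisson equation to restore the $g - (\mu_\eta^T g) e + P_\theta h_\eta - h_\eta$ form; the other piece, $(\mu_\theta - \mu_\eta)^T (P_\theta - P_\eta) h_\eta$, is the residual that must be bounded by $N_2(\theta,\eta)$.

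Next I would establish a perturbation identity for $\mu_\theta - \mu_\eta$. Using $\mu_\theta^T(I-P_\theta) = \mu_\eta^T(I-P_\eta) = 0$ together with $(\mu_\theta - \mu_\eta)^T \Pi_\eta = 0$ (since $\mu_\theta^T\Pi_\eta = \mu_\eta^T$), one obtains $(\mu_\theta - \mu_\eta)^T (I - P_\eta + \Pi_\eta) = \mu_\theta^T (P_\theta - P_\eta)$. Right-multiplying by $Z_\eta = (I - P_\eta + \Pi_\eta)^{-1}$ yields $(\mu_\theta - \mu_\eta)^T = \mu_\theta^T K$, where $K := (P_\theta - P_\eta) Z_\eta$ satisfies $\|K\|_V \le D_{\theta,\eta} < 1$. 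Iterating once more gives the Neumann form $(\mu_\theta - \mu_\eta)^T = \mu_\eta^T K (I - K)^{-1}$, whose $V$-norm convergence follows from $D_{\theta,\eta}<1$. As a by-product, bounding $\mu_\theta^T V = \mu_\eta^T V + (\mu_\theta - \mu_\eta)^T V$ gives $\mu_\theta^T V \le (1-D_{\theta,\eta})^{-1} \mu_\eta^T V$.

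With $f := g - (\mu_\eta^T g)\, e$ and $(P_\theta - P_\eta) h_\eta = K f$, the residual rewrites as $\mu_\eta^T K^2 (I - K)^{-1} f$. Splitting $(I-K)^{-1} = I + K(I-K)^{-1}$ separates this into a leading piece whose bound, using $|\mu_\eta^T M h| \le \|M\|_V (\mu_\eta^T V) \|h\|_{\infty,V}$ together with $\|K\|_V \le D_{\theta,\eta}$ and $\|(I-K)^{-1}\|_V \le (1-D_{\theta,\eta})^{-1}$, contributes the factor $\tfrac{D_{\theta,\eta}^2}{1 - D_{\theta,\eta}}$, and a higher-order piece that is controlled by recovering $(P_\theta - P_\eta) = K \, Z_\eta^{-1}$ with $Z_\eta^{-1} = I - P_\eta + \Pi_\eta$; this produces the extra $\|I - \Pi_\eta + P_\eta\|_V \|Z_\eta\|_V$ multiplier together with a second power of $\mu_\eta^T V$ from the $\mu_\theta^T V$ bound above. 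Assembling the two contributions gives exactly $N_2(\theta,\eta)$.

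The main obstacle is the $V$-norm bookkeeping on a countable state space: every rearrangement of the infinite matrix products such as $\mu_\eta^T K (I-K)^{-1} K f$ must be justified by the associativity-in-$V$-norm result cited after Lemma~\ref{lem:Zeq}, and the Neumann expansion of $(I - K)^{-1}$ must be shown to converge in $V$-norm so that identities like $(\mu_\theta - \mu_\eta)^T = \sum_{n \ge 1} \mu_\eta^T K^n$ and the commutation of $K$ with $(I-K)^{-1}$ become rigorous. Once these technicalities are in place, the assembly of $N_1$ and $N_2$ is a direct consequence of the Poisson equation and sub-multiplicativity of the $V$-norm.
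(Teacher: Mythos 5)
Your proposal is correct, and it diverges from the paper's proof at the key estimate. Both arguments open the same way: the Poisson equation plus $\mu_\theta^T P_\theta=\mu_\theta^T$ give $\mu_\theta^Tg-\mu_\eta^Tg=\mu_\theta^T(P_\theta-P_\eta)h_\eta$, which splits into $N_1(\theta,\eta)$ and the residual $(\mu_\theta-\mu_\eta)^T(P_\theta-P_\eta)h_\eta$. From there the paper bounds $\|\mu_\theta-\mu_\eta\|_{1,\V}$ through the identity $\mu_\theta^T-\mu_\eta^T=\mu_\eta^T(P_\eta-P_\theta)Z_\theta$ and then must express $Z_\theta$ in terms of $\eta$-quantities via the countable-state extension of Schweitzer's resolvent identity (\ref{eq:Z}); that detour through $Z_\theta$ is precisely what generates the second, more elaborate factor in $N_2$ containing $\|I-\Pi_\eta+P_\eta\|_\V\|Z_\eta\|_\V$ and the extra power of $\mu_\eta^T\V$. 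You instead stay entirely with $\eta$-quantities: with $K=(P_\theta-P_\eta)Z_\eta$, $h_\eta=Z_\eta(g-(\mu_\eta^Tg)e)$ by Lemma \ref{lem:Zeq}, and $(\mu_\theta-\mu_\eta)^T=\mu_\eta^TK(I-K)^{-1}$ — which is exactly the content of (\ref{eq:mu12}) in the proof of Lemma \ref{lem:st}, so you may simply quote it rather than rederive it and thereby avoid the slight circularity of needing $\mu_\theta^T\V<\infty$ before your ``by-product'' bound — the residual is $\mu_\eta^TK(I-K)^{-1}Kf$ with $f=g-(\mu_\eta^Tg)e$, and Lemma \ref{lem:norms} gives $|{\rm residual}|\le \frac{D_{\theta,\eta}^2}{1-D_{\theta,\eta}}(\mu_\eta^T\V)\|f\|_{\infty,\V}$, which is at most $N_2(\theta,\eta)$ since the bracketed factor in (\ref{eq:M2}) is at least one. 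So your route proves the theorem (indeed a slightly sharper bound) without ever invoking $Z_\theta$ or the Schweitzer identity; what the paper's longer route buys is the countable-state generalization of that perturbation formula, while yours buys simplicity and a tighter constant. The one place your write-up overreaches is the claim that your splitting reproduces ``exactly $N_2$'': the mechanism you sketch for manufacturing the $\|I-\Pi_\eta+P_\eta\|_\V\|Z_\eta\|_\V$ term by reinserting $Z_\eta^{-1}$ is not needed and not really coherent as stated, but this is harmless because your direct bound is dominated by $N_2$, so inequality (\ref{eq:ineq2}) follows a fortiori; finiteness of $N_1$ and $N_2$ also falls out of the same norm estimates ($|N_1|=|\mu_\eta^TKf|\le D_{\theta,\eta}(\mu_\eta^T\V)\|f\|_{\infty,\V}$).
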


It follows from Theorem \ref{thm:main} that the negativity of the right side of inequality (\ref{eq:ineq2}) guarantees   that policy $\pi_\theta$  yields an improved performance compared  with the initial policy $\pi_{\eta}.$
Since
\begin{align*}
 \min\limits_{\theta \in \Theta: ~D_{\theta, \eta}<1}[N_1(\theta, \eta) +N_2(\theta, \eta)] \leq N_1(\eta, \eta) +N_2(\eta, \eta) = 0,
\end{align*}
we want to find  $\theta=\theta^*$:
\begin{align}
  \label{eq:argmin}
  \theta^* =\argmin \limits_{\theta \in \Theta: ~D_{\theta, \eta}<1}[N_1(\theta, \eta) +N_2(\theta, \eta)]
\end{align}
 to achieve the maximum improvement in the upper bound  (\ref{eq:ineq2}).    In the setting
of finite horizon and infinite discounted RL problems,
\cite{Kakade2002, Schulman2015} propose to  fix the maximum change
between policies $\pi_\theta$ and $\pi_{\eta}$ by bounding  the
$N_2(\theta, \eta)$ term and  to minimize  $N_1(\theta, \eta)$. Below, we discuss the motivation for developing the PPO algorithm proposed in Section \ref{sec:ppo},
leading to a practical algorithm to approximately solve optimization
(\ref{eq:argmin}).

First, we  observe that
\begin{align*}
|N_1(\theta, \eta)|:&= \left|\mu_{\eta}^T( g -(\mu_\eta^Tg) e   +P_{\theta}h_{\eta} - h_{\eta} ) \right|\\
 &\leq (\mu_{\eta}^T\V)\| g - (\mu_\eta^Tg) e  +P_{\theta}h_{\eta} - h_{\eta } \|^{ }_{\infty, \V}\\
& =  (\mu_{\eta}^T\V)\| (P_{\theta} - P_{\eta})h_{\eta} \|^{ }_{\infty, \V}\\
&= (\mu_{\eta}^T\V)\| (P_{\theta} - P_{\eta}) Z_{\eta}\left (g- (\mu_\eta^Tg) e\right)\|^{ }_{\infty, \V}\\
&\leq (\mu_{\eta}^T\V )\| g -(\mu_\eta^Tg) e \|_{\infty, \V}^{ } D_{\theta, \eta}.
\end{align*}
Therefore, when $\theta$ is close to $\eta$, we expect that $D_{\theta, \eta}$
in (\ref{eq:D}) is small and  $N_1(\theta, \eta) =O(D_{\theta, \eta})$.

From (\ref{eq:M2}), $N_2(\theta, \eta)$ is nonnegative and
$N_2(\theta, \eta) = O(D_{\theta, \eta}^2)$.  Therefore, when
$N_1({\theta, \eta})<0$ and $D_{\theta, \eta}$ is small enough,
$\pi_\theta$ is a strict improvement over $\pi_\eta$.   Lemma \ref{lem:policies}  shows
that the distance $D_{\theta, \eta}$ can be controlled by the
probability ratio
\begin{align}
  \label{eq:ratio}
 r_{\theta, \eta}(a|x): =\frac{\pi_\theta(a|x)}{ \pi_{\eta}(a|x)}
\end{align}
between the two policies.
\begin{lemma}\label{lem:policies}

Suppose that the Markov chain with transition matrix $P_{\eta}$ is  an irreducible  chain such that the drift condition (\ref{eq:drift}) holds for some function $\V\geq1$ and the cost function satisfies $|g|<\V$. Then for any $\theta\in \Theta$
   \begin{align*}
D_{\theta, \eta} \leq   \|Z_{\eta}\|^{ }_\V\sup\limits_{x\in \X}    \sum\limits_{a\in \A} \left| r_{\theta, \eta}(a|x)-  1 \right|    G_{\eta}(x, a),
\end{align*}

where $G_{\eta}(x, a): =   \frac{1}{\V(x)} \sum\limits_{y\in \X}  \pi_{\eta}(a|x) P(y|x, a) \V(y)$.
\end{lemma}

 Lemma \ref{lem:policies} implies that
$D_{\theta, \eta}$ is small when the ration  $r_{\theta, \eta}(a|x)$  in (\ref{eq:ratio}) is close to 1
for each state-action pair $(x, a)$. Note that
$r_{\theta, \eta}(a|x) = 1$ and $D_{\theta, \eta}=0$ when
$\theta=\eta.$ See Appendix Section \ref{sec:proofs} for the proof.

\subsection{Proximal Policy Optimization}\label{sec:ppo}

We rewrite the first term of the right-hand side of (\ref{eq:ineq2}) as:
\begin{align}\label{eq:MA}
N_1(\theta, \eta) & =  \mu_{\eta}^T(g -(\mu_\eta^Tg) e   +P_{\theta}h_{\eta} - h_{\eta} ) \nonumber \\
 & =  \underset{\substack{ x\sim \mu_{\eta}\\ a\sim \pi_{\theta}(\cdot|x) \\ y\sim P(\cdot|x, a)}  }{\E} \left[    g(x ) -(\mu_\eta^Tg) e   + h_{\eta}(y) - h_{\eta}(x)   \right]  \nonumber\\                  &
                    =  \underset{\substack{ x\sim \mu_{\eta}\\ a\sim \pi_{\theta}(\cdot|x)  }  }{\E} A_{\eta} (x, a)\nonumber\\
                  & =  \underset{\substack{ x\sim \mu_{\eta}\\ a\sim \pi_{\eta}(\cdot|x)  }  }{\E} \left[ \frac{\pi_{\theta}(a|x)}{\pi_{\eta}(a|x)}A_{ \eta} (x, a) \right]
 =  \underset{\substack{ x\sim \mu_{\eta}\\ a\sim \pi_{\eta}(\cdot|x)  }  }{\E}\Big[  r_{\theta, \eta}(a|x)A_{\eta} (x, a)\Big],
\end{align}
where we define an advantage function $A_{\eta}:\X\times \A\rightarrow \R$ of policy $\pi_\eta, $ $\eta\in \Theta$ as:
\begin{align}\label{eq:A}
	A_{\eta}(x, a): =   \underset{\substack{ y\sim P(\cdot|x, a)}  }{\E} \left[g(x) - \mu_\eta^T g + h_{\eta} (y) -h_{\eta} (x) \right].
\end{align}

Equation (\ref{eq:MA}) implies that if we want to minimize $N_1(\theta, \eta)$, then the ratio $r_{\theta, \eta}(a|x)$ should be minimized (w.r.t. $\theta$) when $A_\eta(x,a)>0$, and maximized when $A_\eta(x,a)<0$  for each $x\in \X$ .
The end of Section~\ref{sec:TRPOforAC} suggests
that we should strive to (a)
\begin{align}\label{eq:minM1}
\text{minimize } \quad N_1(\theta, \eta),
\end{align}
and (b) keep the ratio $r_{\theta, \eta}(a|x)$ in (\ref{eq:ratio})
close to 1. In \cite{Schulman2017} the authors propose to minimize
(w.r.t. $\theta\in \Theta$) the following clipped surrogate objective
\begin{align}\label{eq:PO}
L(\theta, \eta):=\underset{\substack{ x\sim \mu_{\eta}\\ a\sim \pi_{\eta}(\cdot|x)  }  }{\E}   \max \left[  r_{\theta, \eta}(a|x) A_{\eta} (x, a) ,  ~ \text{clip} (r_{\theta, \eta}(a|x),  1-\epsilon, 1+\epsilon)  A_{\eta} (x, a)  \right],
\end{align}
where  $\epsilon\in(0, 1)$ is a hyperparameter, and clipping function is defined as
\begin{align*}
  \text{clip}(c,  1-\epsilon, 1+\epsilon):= \begin{cases} \max(1-\epsilon, c),~\text{if } c\leq1,\\  \min(1+\epsilon, c),~\text{otherwise.}\end{cases}
\end{align*}
 In \cite{Schulman2017} the authors  coined the term,
proximal policy optimization (PPO), for their algorithm, and
demonstrated its ease of implementation and its ability to find good control
policies.

The objective term
$\text{clip} (r_{\theta, \eta}(a|x), 1-\epsilon, 1+\epsilon) A_{\eta}
(x, a) $ in (\ref{eq:PO}) prevents changes to the policy that
move $ r_{\theta, \eta}(a|x) $ far from 1. Then the objective function
(\ref{eq:PO}) is a upper bound (i.e. a pessimistic bound) on the
unclipped objective (\ref{eq:minM1}).  Thus, an improvement on the
objective (\ref{eq:PO}) translates to an improvement on
$N_1(\theta, \eta)$ only when $\theta\in \Theta$ satisfies
$r_{\theta, \eta}\in (1-\epsilon, 1+\epsilon)$. The alternative
heuristics proposed in \cite{Schulman2015, Wang2016, Schulman2017, Wu2017} to solve
optimization problem (\ref{eq:argmin}): each defines a loss function that controls $N_2(\theta, \eta)$ and
minimizes the $N_1(\theta, \eta)$ term.
Following \cite{Schulman2017}, we use loss function
(\ref{eq:PO})  because of its implementation simplicity.

To compute objective function in (\ref{eq:PO}) we first evaluate the expectation and precompute advantage functions in (\ref{eq:PO}). We assume that an approximation $\hat A_\eta:\X\times \A\rightarrow \R$ of the advantage function (\ref{eq:A}) is available and focus on estimating the objective from simulations.
See Section \ref{sec:M1} below for estimating $\hat A_\eta$.



 Given an episode with length $N$ generated  under policy $\pi_\eta$  we compute  the advantage function estimates $\hat A_{\eta}\left(x^{({k, q})}, a^{({k, q})}\right)$   at the observed state-action pairs:
\begin{align*}
D^{(0:N-1)}: = \left \{\left(  x^{(0)}, a^{(0)}, \hat A_{\eta}(x^{(0)}, a^{(0)})  \right),
\left (   x^{(1)}, a^{(1)}, \hat A_{\eta}(x^{(1)}, a^{(1)} )\right) ,\cdots,
\left(x^{({N-1})}, a^{({N-1})}, \hat A_{\eta}(x^{({N-1})}, a^{({N-1})}) \right)\right\},
   \end{align*}
   and estimate the loss function (\ref{eq:PO}) as a sample average over the state-action pairs from the episode:
 \begin{align}\label{eq:popt}
   \hat L\left(\theta, \eta,D^{(0:N-1)} \right) = \sum\limits_{ k=0}^{N-1} \max\left[ \frac{\pi_{\theta}\left( a^{({k})}| x^{({k})} \right)}{\pi_{\eta}\left( a^{({k})}|x^{({k})} \right)}  \hat A_{\eta}\left(x^{(k)}, a^{(k)}\right) ,\text{clip}\left(\frac{\pi_{\theta}\left(a^{(k)}| x^{(k)}\right)}{\pi_{\eta}\left(a^{(k)}|x^{(k)}\right)},     1-\epsilon, 1+\epsilon \right  ) \hat  A_{\eta}\left(x^{(k)}, a^{(k)}\right)  \right].
 \end{align}
 In theory, one long episode  under policy $\pi_\eta$  starting from any initial state $x^{(0)}$
 is sufficient because the following SLLN for Markov chains holds:
 with probability $1$,
 \begin{align*}
   \lim_{N\to\infty} \frac{1}{N}   \hat L\left(\theta, \eta,D^{(0:N-1)} \right)
   = L(\theta, \eta).
 \end{align*}

\section{Advantage function estimation}\label{sec:M1}

The computation of objective function (\ref{eq:PO}) relies on the availability of  an estimate of  advantage function
$A_{\eta} (x, a)$ in (\ref{eq:A}).  We assume our MDP model is
known. So the expectation on (\ref{eq:A}) can be computed
exactly, and we can perform the computation in a timely manner. In this section, we explain how to
estimate $h_\eta$, a solution to the Poisson equation
\eqref{eq:Poisson} with $P=P_\eta$ and
$\mu=\mu_\eta$.


To compute expectation $A\left(x^{(k)}, a^{(k)}\right)$ in (\ref{eq:A}) for a given
state-action pair $\left(x^{(k)},a^{(k)}\right)$, we need to evaluate $h_\eta(y)$ for each $y$ that
is reachable from $x^{(k)}$. This requires one to estimate
$h_\eta(y)$ for some states $y$ that have not been visited in the simulation. Our strategy is to
use Monte Carlo method to estimate $h_\eta(y)$ at a  selected
subset of $y$'s, and then use an approximator $f_\psi(y)$ to
replace $h_\eta(y)$ for an arbitrary $y\in \X$. The latter is standard
in deep learning. Therefore, we focus on finding a good estimator
$
  \hat h(y)
$
for $h_\eta(y)$.

\subsection{Regenerative estimation}

 Lemma~\ref{lem:poisson_sol} provides a representation of the fundamental solution
\eqref{eq:h} for $h_\eta$. Unfortunately, the known unbiased Monte Carlo estimators of the fundamental solution rely on obtaining  samples from the stationary distribution of the Markov chain \cite[Section 5.1]{Cooper2003}.

We define the following solution to the Poisson equation (\ref{eq:Poisson}).
\begin{lemma}\label{lem:poisson_sol2}

Consider the $\V$-uniformly ergodic  Markov chain with transition matrix $P$ and the stationary  distribution $\mu$. Let $x^*\in X$ be an arbitrary state of the positive recurrent Markov chain.
 For any cost function $g:\X \rightarrow \R$ such that $|g|\leq \V$, the Poisson's equation (\ref{eq:Poisson}) admits  a  solution
\begin{align}\label{eq:h2}
h^{(x^*)}(x) : = \E \left[\sum\limits_{k=0}^{\sigma(x^*)-1} \left(g(x^{(k)}) - \mu^Tg\right)\Big|~x^{(0)} = x\right] ~\text{for each }x\in \X,
\end{align}
where $\sigma(x^*) = \min\left\{k>0~|~x^{(k)} = x^*\right\}$ is the first future time when state $x^*$ is visited.
 Furthermore, the solution has a finite $\V$-norm: $\|h^{(x^*)}\|_{\infty, \V}<\infty.$
\end{lemma}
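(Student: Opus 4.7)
The plan is to reduce Lemma~\ref{lem:poisson_sol2} to the already-established fundamental solution $h^{(f)}$ of Lemma~\ref{lem:poisson_sol} by proving the identity
\[
h^{(x^*)}(x) = h^{(f)}(x) - h^{(f)}(x^*) \qquad \text{for every } x\in\X.
\]
Once this identity is in hand, both the Poisson equation (which is invariant under constant shifts of the solution) and the $\V$-norm bound $\|h^{(x^*)}\|_{\infty,\V} \le \|h^{(f)}\|_{\infty,\V} + |h^{(f)}(x^*)|$ follow immediately, using $\V\ge 1$. I intend to establish the identity through an optional-stopping argument applied at the regeneration time $\sigma(x^*)$.

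Two preliminary ingredients are needed. First, $\|h^{(f)}\|_{\infty,\V}<\infty$; this follows directly from the geometric ergodicity estimate (\ref{eq:geo}) applied to $g$, since $|h^{(f)}(x)|\le\sum_{k=0}^\infty R\V(x)r^k = \frac{R}{1-r}\V(x)$. Second, the Foster--Lyapunov-type estimates
\[
\E_x[\sigma(x^*)] < \infty \quad \text{and} \quad \E_x\!\left[\sum_{k=0}^{\sigma(x^*)-1}\V(x^{(k)})\right] < \infty \quad \text{for each } x\in\X.
\]
The latter is standard when $x^*$ belongs to the set $C$ of the drift condition (\ref{eq:drift}): iterating the drift inequality yields the bound $\V(x)/(1-b)$. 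For a general $x^*$, I would decompose a path up to $\sigma(x^*)$ into successive excursions that return to $C$ and then close the path with a final leg from some $y\in C$ to $x^*$. The latter expectations are finite by irreducibility and positive recurrence, and the supremum of such quantities over the finite set $C$ is therefore finite.

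With these ingredients, the Poisson equation for $h^{(f)}$ makes
\[
M_n := h^{(f)}(x^{(n)}) - h^{(f)}(x^{(0)}) + \sum_{k=0}^{n-1}\bigl(g(x^{(k)}) - \mu^T g\bigr)
\]
a zero-mean $\mathcal{F}_n$-martingale under $\Prob_x$. Optional stopping at the bounded time $n\wedge\sigma(x^*)$ gives $\E_x[M_{n\wedge\sigma(x^*)}]=0$, and I would pass to $n\to\infty$ by dominated convergence using the envelope
\[
|M_{n\wedge\sigma(x^*)}| \le \frac{R}{1-r}\!\left(\V(x^*) + \sum_{k=0}^{\sigma(x^*)-1}\V(x^{(k)}) + \V(x)\right) + \sum_{k=0}^{\sigma(x^*)-1}\V(x^{(k)}) + (\mu^T\V)\,\sigma(x^*),
\]
which is integrable by the second preliminary estimate. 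Since $x^{(\sigma(x^*))}=x^*$, the limiting identity $0 = h^{(f)}(x^*) - h^{(f)}(x) + h^{(x^*)}(x)$ rearranges to the desired relation.

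The main obstacle is the second preliminary estimate when $x^*\notin C$: the excursion argument has to combine the standard drift bound on excursions into $C$ with a finite-state analysis on $C$ to control the final hit of $x^*$. Everything else---identifying the martingale, carrying out dominated convergence, and reading off the two conclusions of the lemma from the displayed identity---is a short computation given the tools already developed earlier in the excerpt.
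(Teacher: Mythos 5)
Your proposal is correct in outline, but it takes a genuinely different route from the paper: the paper offers no in-house argument for this lemma and simply points to \cite[Proposition A.3.1]{Meyn2007}, where the cycle representation (\ref{eq:h2}) is verified \emph{directly} as a solution of (\ref{eq:Poisson}) via the strong Markov property and a first-entrance decomposition at $x^*$ (with Kac's formula giving $h^{(x^*)}(x^*)=0$), the $\V$-bound coming from $f$-regularity theory. You instead \emph{derive} the cycle formula from the fundamental solution $h^{(f)}$ of Lemma~\ref{lem:poisson_sol} by optional stopping of the martingale $M_n$ at $\sigma(x^*)$, obtaining the identity $h^{(x^*)}(x)=h^{(f)}(x)-h^{(f)}(x^*)$; this buys a self-contained reduction given the tools already stated (in particular the bound $|h^{(f)}(x)|\le \frac{R}{1-r}\V(x)$ from (\ref{eq:geo})), and it makes both conclusions of the lemma immediate, since the Poisson equation is invariant under constant shifts and $\V\ge 1$. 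The martingale identification, the optional stopping at the bounded time $n\wedge\sigma(x^*)$, the integrable envelope, and the passage to the limit are all correct.

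The one step you should tighten is the $\V$-modulated hitting estimate $\E_x\bigl[\sum_{k=0}^{\sigma(x^*)-1}\V(x^{(k)})\bigr]<\infty$ for an \emph{arbitrary} regeneration state $x^*$. Your excursion sketch appeals to ``irreducibility and positive recurrence,'' which by themselves control only plain hitting times, not $\V$-weighted sums, and the bookkeeping over a random number of returns to $C$ before the final leg is not spelled out. The claim is true and admits two clean justifications: (i) cite $f$-regularity, e.g. \cite[Theorem 14.2.3]{Meyn2009}, noting that (\ref{eq:drift}) is a drift of the form $P\V-\V\le -(1-b)\V+d\,\I_C$ and that every singleton of a countable irreducible chain is petite; or (ii) bypass $C$ entirely: Kac's cycle representation of $\mu$ at $x^*$ gives $\E_{x^*}\bigl[\sum_{k=0}^{\sigma(x^*)-1}\V(x^{(k)})\bigr]=(\mu^T\V)\,\E_{x^*}[\sigma(x^*)]<\infty$ by Lemma~\ref{lem:drift} and positive recurrence, and then the strong Markov property applied at the first visit to $x$, on the event that $x$ is reached before returning to $x^*$ (an event of positive $\Prob_{x^*}$-probability by irreducibility), transfers this bound to every initial state $x$. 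With that step repaired, your argument goes through.
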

See \cite[Proposition A.3.1]{Meyn2007} for the proof. Here, we refer to state $x^*$ as a \textit{regeneration state}, and to the times $\sigma(x^*)$ when the regeneration state is visited   as \textit{regeneration times}.

The value of  advantage function (\ref{eq:A}) does not depend on a particular choice of a solution of the Poisson equation (\ref{eq:Poisson}) since if $h_1$ and $h_2$ are two solutions  such that  $\mu^T(|h_1|+|h_2|)<\infty$, then there exists a constant $b\in \R$ such that $h_1(x) = h_2(x) +b$ for each $x\in \X$,  \cite[Proposition 17.4.1]{Meyn2009}. Therefore, we use representation (\ref{eq:h2})  for $h_\eta$ in computing (\ref{eq:A}).

We assume that an episode consisting  of $N$ regenerative cycles
\begin{align*}
  \left\{x^{(0)}, x^{(1)},\cdots, x^{(\sigma_1)},\cdots,   x^{(\sigma_{N}-1)}   \right \}
\end{align*}
has been generated  under policy $\pi_\eta$, where $x^{(0)}=x^*.$

We compute an estimate of the long-run average cost based on   $N$ regenerative cycles  as
\begin{align}\label{es_av}
\widehat{\mu^T_\eta  g}  : =\frac{1}{\sigma(N)}  \sum\limits_{k=0}^{\sigma(N)-1} g(x^{(k)}),
\end{align}
where $\sigma(n)$ is the $n$th time when  regeneration state $x^*$ is visited.
Next, we consider an arbitrary state $x^{(k)}$ from the generated episode.  We define a one-replication estimate of the solution  to the Poisson equation (\ref{eq:h2}) for a state $x^{(k)}$ visited at time $k$  as:
\begin{align}\label{eq:es1}
\hat h_k: =  \sum\limits_{t=k}^{\sigma_k-1} \left(g(x^{(t) }) - \widehat {\mu_\eta^Tg}  \right) ,
\end{align}
where $\sigma_k = \min\left\{t>k~|~x^{(t)}=x^*\right\}$  is the first time when the regeneration state $x^*$ is visited after time $k$.
We note that the one-replication estimate (\ref{eq:es1}) is computed for every timestep.
   The estimator (\ref{eq:es1}) was    proposed in  \cite[Section 5.3]{Cooper2003}.

We use   function $f_\psi:\X\rightarrow \R$ from  a family of function approximators $\{f_\psi, \psi\in \Psi \}$ to represent function $h_{  \eta}$ and choose function $f_\psi$  from $\{f_\psi, \psi\in \Psi \}$ to minimize the mean square distance to the one-replication estimates  $\{\hat  h_k\}_{k=0}^{\sigma(N)-1}:$
\begin{align}\label{eq:Vappr}
\psi^* = \arg\min\limits_{\psi \in \Psi} \sum\limits_{k=0}^{\sigma(N)-1} \left ( f_{\psi}(x^{(k)}) - \hat h_k   \right)^2.
\end{align}

With available function approximation $f_{\psi^*}$ for $h_\eta$,  we estimate the advantage function (\ref{eq:A})  as:
 \begin{align}\label{eq:Aes1}
 	\hat A_{\eta}(x^{(k)}, a^{(k)}): =   g(x^{(k)}) - \widehat{\mu_\eta^T g} + \sum\limits_{y\in \X} P\left(y|x^{(k)}, a^{(k)}\right) f_{\psi^*} (y) -  f_{\psi^*}(x^{(k)}).
 \end{align}

  We assume that $Q$ episodes, $Q\geq 1$, can be simulated in parallel, and each of $q=1, ..., Q$ (parallel) actors collect an episode
\begin{align}\label{eq:episodes}
\left\{x^{(0, q)}, a^{(0, q)}, x^{(1 , q)}, a^{(1, q)}, \cdots, x^{(k , q)}, a^{(k, q)},\cdots, x^{(\sigma^q(N)-1, q)}, a^{(\sigma^q(N)-1, q)}\right\}
\end{align}
with $N$ regenerative cycles, where $\sigma^q(N)$  is the $N$th regeneration time in the simulation of $q$th actor and $x^{(0, q)} = x^*$ for each $q=1, ..., Q$.
 Given  the episodes (\ref{eq:episodes})  generated under policy $\pi_\eta$,  we compute  the advantage function estimates $\hat A_{\eta}(x^{(k, q)}, a^{(k, q)})$   by (\ref{eq:Aes1}):
\begin{align*}
D^{ (0: \sigma^q(N)-1)_{q=1}^Q } = \Big\{\Big( x^{(0, q)}, a^{(0, q)},&  \hat A_{\eta}(x^{(0, q)}, a^{(0, q)}) \Big), \cdots, \\
 &\Big( x^{(\sigma^q(N)-1, q)}, a^{(\sigma^q(N)-1, q)}, \hat A_{\eta}(x^{(\sigma^q(N)-1, q)}, a^{(\sigma^q(N)-1, q)})  \Big)\Big\}_{q=1} ^{Q }.
\end{align*}
  We estimate the loss function (\ref{eq:PO})  as a sample average over these $\sum\limits_{q=1}^Q \sigma^q(N)$  data-points:
   \begin{align}\label{eq:popt}
    \hat L\left(\theta, \theta_i, D^{(1:Q), (0:\sigma^q(N)-1) }\right)  =\sum\limits_{q=1}^Q \sum\limits_{k=0}^{\sigma^q(N)-1} \max\Big[ &\frac{\pi_{\theta}(a^{(k, q)}| x^{(k, q)})}{\pi_{\theta_{i}}(a^{(k, q)}|x^{(k, q)})}  \hat A_{\theta_{i}}(x^{(k, q)}, a^{(k, q)}) ,\\ &\text{clip}\left(\frac{\pi_{\theta}(a^{(k,q)}| x^{(k, q)})}{\pi_{\theta_{i}}(a^{(k, q)}|x^{(k, q)})},     1-\epsilon, 1+\epsilon  \right) \hat A _{\theta_{i}}(x^{(k, q)}, a^{(k,q)})  \Big] \nonumber
   \end{align}
  Optimization of the loss function yields a new policy for the next iteration.

\begin{algorithm}[H]
\SetAlgoLined
\KwResult{policy $\pi_{\theta_I}$ }
 Initialize  policy $\pi_{\theta_0}$ \;
 \For{ policy iteration $i= 0, 1, ..., I-1$}{
  \For{ actor  $q= 1, 2, ..., Q$}{
  Run policy $\pi_{\theta_{i}}$  until it reaches $N$th regeneration time on $\sigma^q(N)$ step: collect an episode
   $\left\{x^{(0, q)}, a^{(0, q)}, x^{(1 , q)}, a^{(1, q)}, \cdots, x^{(\sigma^q(N)-1, q)}, a^{(\sigma^q(N)-1, q)}, x^{(\sigma^q(N), q)}\right\} $\;
  }

  Compute the average cost estimate $\widehat{\mu^T_{\theta_i}   g}$ by (\ref{es_av})\  (utilizing $Q$ episodes) ;

        Compute  $\hat h_{k, q}$,  the estimate of $h_{\theta_i}(x^{(k, q)})$, by (\ref{eq:es1})  for each $q = 1, .., Q$, $k=0, .., \sigma^q(N)-1$\;
        Update $\psi_{i}: = \psi$, where $\psi\in \Psi$ minimizes $  \sum\limits_{q=1}^Q \sum\limits_{k=0}^{\sigma^q(N)-1 } \left ( f_{\psi}(x^{(k, q)}) -\hat h_{k, q}   \right)^2$ following (\ref{eq:Vappr}) \;
  Estimate the advantage functions $\hat A_{\theta_{i}}\left(x^{(k, q)}, a^{(k, q)}\right)$ using (\ref{eq:Aes1}) for each $q = 1,..., Q$, $k=0,..., \sigma^q(N)-1$:
 \begin{align*}
D^{(1:Q), (0:\sigma^q(N)-1) } = \left\{\left( x^{(0, q)}, a^{(0, q)}, \hat A_{0,q}\right), \cdots,  \left( x^{(\sigma^q(N)-1, q)}, a^{(\sigma^q(N)-1, q)}, \hat A_{\sigma^q(N)-1,q}\right)\right\}_{q=1} ^{Q }.
\end{align*}\\
 Minimize the surrogate objective function w.r.t. $\theta\in \Theta$:
 \begin{align*}
    \hat L\left(\theta, \theta_i, D^{(1:Q), (0:\sigma^q(N)-1) }\right)  =\sum\limits_{q=1}^Q \sum\limits_{k=0}^{\sigma^q(N)-1} \max\Big[ &\frac{\pi_{\theta}(a^{(k, q)}| x^{(k, q)})}{\pi_{\theta_{i}}(a^{(k, q)}|x^{(k, q)})}  \hat A_{\theta_{i}}(x^{(k, q)}, a^{(k, q)}) ,\\ &\text{clip}\left(\frac{\pi_{\theta}(a^{(k,q)}| x^{(k, q)})}{\pi_{\theta_{i}}(a^{(k, q)}|x^{(k, q)})},     1-\epsilon, 1+\epsilon  \right) \hat A _{\theta_{i}}(x^{(k, q)}, a^{(k,q)})  \Big]
   \end{align*}\\
 Update $\theta_{i+1}: = \theta$.
 }
 \caption{Base proximal policy optimization algorithm  for long-run average cost problems}\label{alg1}
\end{algorithm}

%

%
%

In practice a naive (standard) Monte Carlo estimator  (\ref{eq:es1}) fails to improve in PPO policy iteration
  Algorithm~\ref{alg1}  because of the large variance (i.e. the estimator is unreliable).  Therefore, we progressively develop a sequence of estimators in the next subsections. We end this section with   two remarks.

\begin{remark}
For any state $x\in \X$ the one-replication estimate (\ref{eq:es1}) is computed each time  the state is visited (the \textit{every-visit} Monte-Carlo method). It is also possible to implement  a \textit{first-visit}  Monte Carlo method which implies that
a one-replication estimate is computed when state $x$ is visited for the first time within a cycle
 and that the next visits to state $x$ within the same cycle are ignored. See \cite[Section 5.1]{Sutton2018} for more details of  every-visit and first-visit Monte-Carlo methods.
\end{remark}
\begin{remark}
  In regression problem (\ref{eq:Vappr}), each data point
    $(x^{(k)}, \hat h_k)$ is used  in the quadratic loss function, despite
    that many of the $x^{(k)}$'s represent the same state. It is possible to  restrict that
    \emph{only distinct} $x^{(k)}$'s are used in the loss function, with
    corresponding $\hat h_k$'s properly averaged. It turns out that this
    new optimization problem yields the same optimal solution as the one in (\ref{eq:Vappr}).
The equivalence of the optimization problems follows from the fact that for an arbitrary sequence of real numbers $a_1, ..,a_n\in \R$:
\begin{align*}
 \arg\min_{x\in B}\left[ \sum\limits_{i=1}^n (x-a_i)^2 \right]= \arg\min_{x\in B}\left[ \left( x-\frac{1}{n}\sum\limits_{i=1}^n a_i\right)^2\right],
\end{align*}
where $B$ is an arbitrary subset of $\R$.
 \end{remark}

\subsection{Approximating martingale-process method}\label{sec:AMP}



Estimator (\ref{eq:es1}) of  the solution to the Poisson equation suffers from the high variance when the regenerative cycles are long  (i.e. the estimator is a sum of many random terms $g(x^{(k)}) - \mu_\eta^Tg$).  In this section we explain how to  decrease  the variance by reducing the magnitude of summands in (\ref{eq:es1}) if  an approximation $\zeta$  of the solution to Poisson's equation $ h_\eta$ is available.

We assume  an episode $\left\{x^{(0)}, a^{(1)}, x^{(1)}, a^{(2)}, \cdots, x^{(K-1)}, a^{(K-1)}, x^{(\sigma(N))}\right\}$ has been generated under policy $\pi_\eta$. From the definition of a solution to the Poisson equation (\ref{eq:Poisson}):
\begin{align*}
g(x^{(k) }) - \mu_\eta^Tg =  h_\eta(x^{(k)})-\sum\limits_{y\in \X}P_\eta (y| x^{(k)} ) h_\eta(y) \text{ for each state } x^{(k)}  \text{ in the simulated episode.}
 \end{align*}
If the approximation $\zeta$ is sufficiently close to $ h_\eta$, then the correlation between
\begin{align*}
g(x^{(k) }) -  \widehat {\mu^T_\eta g}  \quad \text{ and }\quad  \zeta(x^{(k)})-\sum\limits_{y\in X} P_\eta\left(y| x^{(k)}\right) \zeta(y)
\end{align*}
is positive and we can use the control variate  to reduce the variance. This idea gives rise to the approximating martingale-process (AMP) method proposed in \cite{Henderson2002}; also see \cite{Andradottir1993}.

Following \cite[Proposition 7]{Henderson2002}, for some approximation $\zeta$ such that   $\mu_\eta^T \zeta<\infty$ and $\zeta(x^*)=0$,
we consider the martingale process starting from an arbitrary state $x^{(k)}$  until the first regeneration time:
\begin{align}\label{eq:M}
M_{\sigma_k} (x^{(k)})=\zeta(x^{(k)}) +\sum\limits_{t=k}^{\sigma_k-1} \left[\sum\limits_{y\in \X} P_\eta\left(y|x^{(t)}\right)\zeta(y)  - \zeta(x^{(t)})\right],
\end{align}
where $\sigma_k = \min\left\{t>k~|~x^{(t)}=x^*\right\}$  is the first time when the regeneration state $x^*$ is visited after time $k$.
The martingale process (\ref{eq:M}) has zero expectation $\E M_n = 0$ for all $n\geq 0$; therefore we use it as   a control variate to define a new estimator. 
Adding $M_{\sigma_k}$ to estimator (\ref{eq:es1}) we get the AMP estimator of the solution to the Poisson equation:
\begin{align}\label{eq:es2}
\hat h_\eta^{AMP(\zeta)} (x^{(k)} )&: =\zeta(x^{(k)}) +  \sum\limits_{t=k}^{\sigma_k-1} \left(g(x^{(t) }) - \widehat {\mu_\eta^Tg} +\sum\limits_{y\in \X} P_\eta\left(y|x^{(t)}\right)\zeta(y) -\zeta(x^{(t)})  \right) .
\end{align}
We  assume that the estimation of the average cost is accurate (i.e. $ \widehat {\mu_\eta^Tg}  = \mu_\eta^Tg$). In this case estimator (\ref{eq:es2}) has zero variance if the approximation is exact $\zeta = h_\eta$.



Now we want to replace the standard regenerative estimator (\ref{eq:es1})  used in line 8 of Algorithm \ref{alg1} with AMP estimator (\ref{eq:es2}). As the approximation $\zeta$ needed in (\ref{eq:es2}), we use   $f_{\psi_{i-1}}$ that approximates a solution to the Poisson equation corresponding to previous policy $\pi_{\theta_{i-1}}$. In line 8  of Algorithm \ref{alg1} we replace $\hat h (x^{(k)})$ with the estimates $\hat h^{AMP(f_{\psi_{i-1}} )} (x^{(k)})$ that are computed by (\ref{eq:es2}).

 \begin{algorithm}[h]
\SetAlgoLined
\KwResult{policy $\pi_{\theta_I}$ }
 Initialize  policy $\pi_{\theta_0}$ and value function $f_{\psi_{-1}}\equiv 0$ approximators \;
 \For{ policy iteration $i= 0, 1, ..., I-1$}{
  \For{ actor  $q= 1, 2, ..., Q$}{
  Run policy $\pi_{\theta_{i}}$  until it reaches $N$th regeneration time on $\sigma^q(N)$ step: collect an episode
   $\left\{x^{(0, q)}, a^{(0, q)}, x^{(1 , q)}, a^{(1, q)}, \cdots, x^{(\sigma^q( N)-1, q)}, a^{(\sigma^q(N)-1, q)}, x^{(\sigma^q(N), q)}\right\} $\;
  }

  Compute the average cost estimate $\widehat{\mu^T_{\theta_i}  g}$ by (\ref{es_av})\;
        Compute  $\hat h^{AMP(f_{\psi_{i-1}})}_{k, q}$,  the estimate of $h_{\theta_i}(x^{(k, q)})$, by (\ref{eq:es2})  for each $q = 1, .., Q$, $k=0, .., \sigma^q(N)-1$\;
        Update $\psi_{i}: = \psi$, where $\psi\in\Psi$ minimizes $  \sum\limits_{q=1}^Q \sum\limits_{k=0}^{\sigma^q(N)-1 } \left ( f_{\psi}(x^{(k, q)}) -\hat h^{AMP(f_{\psi_{i-1}})}_{k, q}   \right)^2$ following (\ref{eq:Vappr})\;
  Estimate the advantage functions $\hat A_{\theta_{i}}(x^{(k, q)}, a^{(k, q)})$ using (\ref{eq:Aes1}) for each $q = 1, .., Q$, $k=0,..., \sigma^q(N)-1$:
 \begin{align*}
D^{ (0:\sigma^q(N)-1)_{q=1}^Q } = \left\{\left( x^{(0, q)}, a^{(0, q)}, \hat A_{0,q}\right), \cdots,  \left( x^{(\sigma^q(N)-1, q)}, a^{(\sigma^q(N)-1, q)}, \hat A_{\sigma^q(N)-1,q}\right)\right\}_{q=1} ^{Q }.
\end{align*}\\
 Minimize the surrogate objective function w.r.t. $\theta\in \Theta$:
 \begin{align*}
  \hat L\left(\theta, \theta_i, D^{ (0:\sigma^q(N)-1)_{q=1}^Q}\right) =\sum\limits_{q=1}^Q \sum\limits_{k=0}^{\sigma^q(N)-1} \max\Big[ &\frac{\pi_{\theta}(a^{(k, q)}| x^{(k, q)})}{\pi_{\theta_{i}}(a^{(k, q)}|x^{(k, q)})}  \hat A_{\theta_{i}}(x^{(k, q)}, a^{(k, q)}) ,\\ &\text{clip}\left(\frac{\pi_{\theta}(a^{(k,q)}| x^{(k, q)})}{\pi_{\theta_{i}}(a^{(k, q)}|x^{(k, q)})},     1-\epsilon, 1+\epsilon  \right) \hat A _{\theta_{i}}(x^{(k, q)}, a^{(k,q)})  \Big]
   \end{align*}\\
 Update $\theta_{i+1}: = \theta$.

 }
 \caption{Proximal policy optimization with AMP method}\label{alg1amp}
\end{algorithm}

\subsection{Variance reduction through discounting}\label{sec:ge}

 Unless an approximation $\zeta$ is exact, each term in  the summation in (\ref{eq:es2}) is random with nonzero variance.  When the expected length of a regeneration cycle is large, the   cumulative variance  of estimator (\ref{eq:es2})   can be devastating.

In this subsection, we describe a commonly used solution:  introduce a forgetting   factor $\gamma\in (0,1)$
 to discount the future relative costs,  \cite{Jaakkola1994, Baxter2001, Marbach2001, Kakade2001, Thomas2014, Schulman2016}. 

We let
\begin{align}\label{eq:r}
r(x^*):= (1-\gamma)\E \left[\sum\limits_{t=0}^\infty \gamma^t g(x^{(t)}) ~|~x^{(0)}=x^*\right]
\end{align}
be a \textit{present discounted value} at state $x^*$; the term
``present discounted value'' was proposed in \cite[Section
11.2]{Wagner1975}. We define the \textit{regenerative
  discounted relative value function}   as:
\begin{align}\label{eq:Vreg}
V^{(\gamma)}(x): = \E \left[ \sum\limits_{t=0}^{\sigma(x^*)-1} \gamma^{t}  \left(g(x^{(t)}) -  r(x^*) \right) ~\Big| ~x^{(0)} = x\right] \text{ for each }x\in \X,
\end{align}
where $x^{(k)}$ is the state of the Markov chain with transition matrix
$P$ at time $k$, $x^*$ is the prespecified regeneration
state, and $\gamma\in (0,1]$ is a discount factor.  We note that
$V^{(\gamma)}(x^*) = 0$ by definition.  It follows from
  \cite[Corollary 8.2.5.]{Puterman2005} that under the drift condition,
  $r(x^*)\to \mu^T g$ as $\gamma\uparrow 1$.  Furthermore, by
  Lemma \ref{lem:disc}, for each $x\in \X$,
  \begin{align*}
   V^{(\gamma)}(x) \to h^{(x^*)}(x)  \text{ as } \gamma\uparrow 1,
  \end{align*}
  where $h^{(x^*)}$ is a solution to the Poisson equation given in (\ref{eq:h2}).

  The proof of Lemma \ref{lem:disc} can be found in Appendix Section \ref{sec:disc}.

  \begin{lemma}\label{lem:disc}
  We consider  irreducible, aperiodic   Markov chain  with transition matrix $P$ that satisfies drift condition (\ref{eq:drift}).
We let (\ref{eq:Vreg})  be a regenerative
  discounted relative value function  for discount factor $\gamma$ and one-step cost function $g$,  such that  $|g(x)|<\V(x)$ for each $x\in \X$. We let $h^{(x^*)} $ be a solution of the Poisson equation (\ref{eq:Poisson}) defined by (\ref{eq:h2}).

 Then for some constants $R<\infty$ and $r\in(0,1)$  we have
\begin{align*}
\left|V^{(\gamma)}(x) - h^{(x^*)}(x) \right|\leq \frac{rR(1-\gamma)}{(1-r)(1-\gamma r)}(\V(x)+\V(x^*))
\end{align*}
for each $x\in \X.$
\end{lemma}

  We let
\begin{align}\label{eq:VdiscVarEst}
 \hat V^{(\gamma )}(x):=  \sum\limits_{k=0}^{\sigma(x^*)-1} \gamma^k\left(g (x^{(k)} )- r(x^*) \right )
\end{align}
   where $x^{(0)} = x$ and $x^{(k)}$ is the $k$th step of the Markov chain with transition matrix $P$, be an one-replication estimate of (\ref{eq:Vreg}).
    By Lemma \ref{lem:var} the variance of this estimator $Var[ \hat V^{(\gamma )}(x)] $
converges to zero with rate $\gamma^2$ as $\gamma\downarrow 0$ for each $x\in \X$. See Appendix Section \ref{sec:disc} for the proof.

   \begin{lemma}\label{lem:var}
   We consider  irreducible, aperiodic   Markov chain  with transition matrix $P$ that satisfies drift condition (\ref{eq:drift})
and assume that one-step cost function $g:\X\rightarrow \R$ satisfies $g^2(x)\leq\V(x)$ for each $x\in \X$.
We let (\ref{eq:Vreg})  be a regenerative
  discounted relative value function for discount factor $\gamma$, and assume that   regeneration state  $x^*$ is such that set \begin{align*}
\Big\{x\in \X:b\V(x)\leq \V(x^*)\Big\}
\end{align*} is a finite set, where function $\V$ and constant $b$ are from (\ref{eq:drift}). We consider an arbitrary $x\in \X$ and let (\ref{eq:VdiscVarEst})
 be an one-replication estimate of   (\ref{eq:Vreg}).

If $\gamma<1$, then there exist   constants $R< \infty$, $B<\infty$, and $r\in (0,1)$ independent of $\gamma$ such that the variance of estimate (\ref{eq:VdiscVarEst}) is bounded as
 \begin{align*}
 Var[ \hat V^{(\gamma )}(x)] \leq \gamma^2\left( R\V(x)\frac{1}{1-\gamma^2r} + (\mu^T\V)B \frac{1}{1-\gamma^2}  \right), \text{ for each }x\in \X,
\end{align*}
where   $\mu$ is a stationary distribution of transition matrix $P$.
\end{lemma}

For a fixed  $\gamma\in (0,1)$ and state $x\in \X$  any unbiased
  estimator of $V^{(\gamma)}(x)$ is a biased estimator of
  $h^{(x^*)}(x)$. It turns out that the discount counterparts of the
  estimators (\ref{eq:es1}) and (\ref{eq:es2}) for
  $V^{(\gamma)}(x)$ have smaller variances than the two estimators for $h^{(x^*)}(x)$. This variance
  reduction can be explained intuitively as follows.  Introducing the
discount factor $\gamma$ can be interpreted as a modification of the
original transition dynamics; under the modified dynamics, any action
produces a transition into a regeneration state with probability at
least $1 - \gamma$, thus shortening the length of   regenerative
cycles. See Appendix Section \ref{sec:disc} for details.


We define a \textit{discounted} advantage function for policy $\pi_\eta$ as:
\begin{align}\label{eq:Adisc}
A^{(\gamma)}_{\eta}(x, a):& = \underset{\substack{ y\sim P(\cdot|x, a)}  }{\E} \left[g(x) - \mu_\eta^T g + V_{\eta}^{(\gamma)}(y) -V_{\eta}^{(\gamma)} (x)\right].
\end{align}

For two policies $\pi_\theta$ and  $\pi_\eta$, $\eta, \theta \in \Theta$, we define an approximation of $N_1(\theta, \eta)$ (\ref{eq:MA}) as:
\begin{align}\label{eq:Ndisc}
N_1^{(\gamma)}(\theta, \eta):=   \underset{\substack{ x\sim \mu_{\eta} \\ a\sim \pi_{\theta}(\cdot|x)  }  }{\E}  \Big[ r_{\theta, \eta}(a|x) A^{(\gamma)}_{\eta} (x, a)\Big].
\end{align}


We use the function approximation   $f_\psi$ of $V_{\eta}^{(\gamma)}$ to   estimate the advantage function (\ref{eq:Adisc})  as (\ref{eq:Aes1}).

We now present the discounted version of the AMP estimator (\ref{eq:es2}).
We let $\zeta$ be an approximation of the discounted value function $V_{\eta}^{(\gamma)}$ such that   $ \mu_\eta^T \zeta <\infty$ and $\zeta(x^*)=0$.  We define the sequence $(M_{\eta}^{(n)}:n\geq 0)$:
\begin{align}\label{eq:mart}
M_{\eta}^{(n)}(x):=\sum\limits_{t=k}^{n-1} \gamma ^{t-k+1}\left[ \zeta   (x^{(t+1)}) - \sum\limits_{y\in \X} P_{\eta}\left(y|x^{(t)}\right)  \zeta(y)  \right],
\end{align}
where   $x = x^{(k)}$ and $x^{( t)}$ is a state of the Markov chain   after $ t$ steps.

We define a one-replication of the AMP estimator for the discounted value function:
\begin{align}\label{eq:es4}
\hat V^{ AMP( \zeta), (\gamma)}_{\eta}(x^{(k)} ) :&=  \sum\limits_{t=k}^{\sigma_k-1} \gamma^{t-k}  \left(g(x^{(t)}) - \widehat{ r_\eta(x^*)} \right)  - M^{(\sigma_k )}_\eta(x^{(k)})\\
&=    \zeta(x^{(k)}) +    \sum\limits_{t=k}^{\sigma_k - 1}\gamma^{t-k} \left(g(x^{(t)} ) - \widehat{r_\eta(x^*)}+  \gamma \sum\limits_{y\in \X} P_{\eta}\left(y|x^{(t)}\right)  \zeta(y)   -   \zeta(x^{(t)} )  \right)  - \gamma^{\sigma_k-k} \zeta(x^*) \nonumber\\
&=    \zeta(x^{(k)}) +    \sum\limits_{t=k}^{\sigma_k - 1}\gamma^{t-k} \left(g(x^{(t)} ) -\widehat{r_\eta(x^*)}+  \gamma \sum\limits_{y\in \X} P_{\eta}\left(y|x^{(t)}\right)  \zeta(y)   -   \zeta(x^{(t)} )  \right),  \nonumber
\end{align}
where $ \widehat{r_\eta(x^*)}$ is an estimation of $r(x^*),$ and $\sigma_k = \min\left\{t>k~|~x^{(t)}=x^*\right\}$ is the first time  the regeneration state $x^*$ is visited after time $k$.

The AMP estimator (\ref{eq:es4}) does not introduce any bias subtracting $M_\eta$ from $\hat V^{(\gamma)}_\eta$ since    $\E M_{\eta}^{(n)} =0$ for any $n>0$ by \cite{Henderson2002}.
Function $V_{\eta}^{(\gamma)}$ is a solution of the following equation (see Lemma  \ref{lem:2sol}):
\begin{align}\label{eq:Poiss_reg}
g(x) - r_\eta(x^*) + \gamma\sum\limits_{y\in \X}P_\eta (y|x) h(y) - h(x) =0 \quad \text{ for each }x\in \X.
\end{align}
Therefore, similar to (\ref{eq:es2}), estimator (\ref{eq:es4}) has zero variance if approximation is exact  $\widehat{r_\eta(x^*)} = r_\eta(x^*)$ and $\zeta = V _\eta$,  see Poisson equation (\ref{eq:Poiss_reg}).

Further variance reduction is possible via \textit{$T$-step truncation} \cite[Section 6]{Sutton2018}. We consider an   estimate of  the value function  (\ref{eq:Vreg})  at a state $x\in \X$ as   the sum of the discounted costs before time $T$, where $T<\sigma(x^*)$,   and the discounted costs after time $T$:

\begin{align}\label{eq:Vst}
\hat V^{(\gamma)}(x) =  \sum\limits_{t=0}^{T-1} \gamma^t \left(g(x^{(t)})-\widehat{r(x^*)}\right) + \gamma^T\sum\limits_{t=0}^{\sigma(x^*)-1} \gamma^t \left(g(x^{(T+t)})-\widehat{r(x^*)}\right),
\end{align}
where   $ x^{(0)}=x$,  $x^{(t)}$ is a state of the Markov chain  after   $t$  steps and $\sum\limits_{t=0}^{\sigma(x^*)-1} \gamma^t g(x^{(T+t)})$ is a standard one-replication estimation of the value function at state $x^{(T)}$.
Instead of estimating the value at state $x^{(T)}$ by a random roll-out (second term in (\ref{eq:Vst})), we can use the value of deterministic approximation function $\zeta$ at state $x^{(T)}.$ The $T$-step truncation reduces the variance of the standard estimator but introduces bias unless the approximation is exact $\zeta(x^{(T)}) =  V^{(\gamma)}(x^{(T)})$.

A \textit{$T$-truncated} version of the AMP estimator is
\begin{align}\label{eq:esT}
\hat V^{AMP( \zeta), (\gamma, T)}_k :&= \sum\limits_{t=k}^{T\wedge \sigma_k-1} \gamma^{t-k} \left(g(x^{(t)})-\widehat{r_\eta(x^*)}\right) + \gamma^{T\wedge \sigma_k -k }\zeta(x^{(T\wedge \sigma_k) }) - M_\eta^{(T\wedge \sigma_k )}(x^{(k)}) \\
                                     &=\sum\limits_{t=k}^{T\wedge \sigma_k -1} \gamma^{t-k} \left(g(x^{(t)})-\widehat{r_\eta(x^*)}\right) - \sum\limits_{t=k}^{T\wedge \sigma_k -1} \gamma^{t-k+1}\left(\zeta(x^{(t+1) }) - \sum\limits_{y\in \X} P_\eta\left(y|x^{(t)} \right)\zeta(y) \right) \nonumber \\
  & \quad { } + \gamma^{T\wedge \sigma_k -k}\zeta(x^{(T\wedge \sigma_k)}) \nonumber \\
&=   \zeta(x^{(k)}) + \sum\limits_{t=k}^{T\wedge \sigma_k -1}  \gamma ^{t-k} \left(g(x^{(t)}) -\widehat{r_\eta(x^*)}+  \gamma \sum\limits_{y\in \X} P_{\eta}\left(y|x^{(t)}\right)  \zeta(y)   -   \zeta(x^{(t)})  \right),\nonumber
\end{align}
where $T\wedge \sigma_k = \min(T, \sigma_k)$.
We note that if the value function approximation and present discounted value approximation  are exact, estimator (\ref{eq:esT}) is unbiased for $V^{(\gamma)}(x^{(k)})$ and has zero variance.  We generalize the $T$-truncated estimator by taking  the
  number of summands $T$  to follow the geometrical distribution with parameter $\lambda<1$ as in the TD($\lambda$) method \cite[Section 12]{Sutton2018}, \cite[Section 3]{Schulman2016}:
\begin{align}\label{eq:es4}
\hat V^{AMP( \zeta), (\gamma, \lambda)}_k  :&=  \E_{T\sim Geom(1-\lambda)} \hat V^{AMP( \zeta), (\gamma, T)}_k \\
&=  (1-\lambda)\Big( \hat V^{AMP( \zeta), (\gamma, 1)}_k +\lambda  \hat V^{AMP( \zeta), (\gamma, 2)}_k  + \lambda^2 \hat V^{AMP( \zeta), (\gamma, 3)}_k+\cdots\nonumber\\
&\quad\quad +\lambda^{\sigma_k}V^{AMP( \zeta), (\gamma, \sigma_k)}_k  + \lambda^{\sigma_k+1}V^{AMP( \zeta), (\gamma, \sigma_k)}_k  +\cdots\Big)\nonumber\\
&= \zeta(x^{(k)}) + \sum\limits_{t=k}^{\sigma_k-1} (\gamma\lambda)^{t-k} \left(g(x^{(t)}) -\widehat{r_\eta(x^*)}+  \gamma \sum\limits_{y\in \X} P_{\eta}\left(y|x^{(t)}\right)  \zeta(y)   -   \zeta(x^{(t)})  \right).\nonumber
\end{align}

The regenerative cycles can be very long.  In practice we want to control/predict the  time and memory amount  allocated for the algorithm execution. Therefore, the simulated episodes should have  finite lengths. We use the following estimation for the first $N$ timesteps if an episode with finite length $N+L$ is generated:
\begin{align}\label{eq:esf}
\hat V^{AMP( \zeta), (\gamma, \lambda, N+L)}_k  := \zeta(x^{(k)}) + \sum\limits_{t=k}^{(N+L) \wedge \sigma_k -1} (\gamma\lambda)^{t-k} \left(g(x^{(t)})  - \widehat{r_\eta(x^*)}+  \gamma \sum\limits_{y\in \X} P_{\eta}\left(y|x^{(t)}\right)  \zeta(y)   -   \zeta(x^{(t)})  \right),
\end{align}
 where $k=0,..., N-1$, and integer $L$ is large enough. We note that if an episode has a finite length $N+L$, regeneration $\sigma_k$  may have not been observed in the generated episode (i.e. $\sigma_k>N+L$).  In this case, we summarize  (\ref{eq:esf}) up to the end of the episode. 

We provide the PPO algorithm where each of $q=1, ..., Q$ parallel actors simulates an episode with length $N+L$: $\left\{x^{(0, q)}, a^{(0,q)}, x^{(1, q)}, a^{(1, q)}, \cdots, x^{(N+L-1, q)}, a^{(N+L-1, q)} \right\}.$   Since we need the approximation $\zeta$ in (\ref{eq:esf}), we use   $f_{\psi_{i-1}}$ that approximates a regenerative discounted value function corresponding to previous policy $\pi_{\theta_{i-1}}$. See Algorithm \ref{alg2}.

\begin{algorithm}[H]
\SetAlgoLined
\KwResult{policy $\pi_{\theta_I}$ }
 Initialize  policy $\pi_{\theta_0}$ and value function $f_{\psi_{-1}}\equiv 0$ approximators \;
 \For{ policy iteration $i=0, 1,  ..., I-1$}{
  \For{ actor  $q= 1, 2, ..., Q$}{
  Run policy $\pi_{\theta_{i}}$  for $N+L$ timesteps: collect an episode
   $\left\{x^{({0, q})}, a^{({0, q})}, x^{({1 , q})}, a^{({1, q})}, ...., x^{({ N+L-1, q})}, a^{({N+L-1, q})} \right\} $\;
  }
  Estimate the average cost   $\widehat{\mu^T_{\theta_i}  g}$ by (\ref{es_av}), the present discounted value $\widehat{r_{\theta_i}(x^*)}$ by (\ref{eq:es_r}) below\;
        Compute  $\hat V^{AMP( f_{\psi_{i-1}}), (\gamma, \lambda)}_{k, q}  $ estimates by (\ref{eq:esf})  for each $q = 1, ..., Q$, $k=0, .., N-1$\;
        Update $\psi_{i}: = \psi$, where $\psi\in \Psi$ minimizes $\sum\limits_{q=1}^Q \sum\limits_{k=0}^{N-1 } \left ( f_{\psi}\left(x^{({k, q})}\right) -\hat V^{AMP(  f_{\psi_{i-1}}), (\gamma, \lambda)}_{k, q} \right)^2$ following (\ref{eq:Vappr}) \;
  Estimate the advantage functions $\hat A_{\theta_{i}}\left(x^{({k, q})}, a^{({k, q})}\right)$ using (\ref{eq:Aes1}) for each $q = 1, .., Q$, $k=0, ..,N-1$:\
  \begin{align*}
  D^{(0:N-1)_{q=1}^Q} = \left\{\left(x^{({0, q})}, a^{({0, q})}, \hat A^{(\gamma)}_{\theta_{i}} (x^{({0, q})}, a^{({0, q})})\right),  \cdots, \left(x^{({N-1, q})}, a^{({N-1, q})}, \hat A^{(\gamma)}_{\theta_{i}} (x^{({N-1, q})}, a^{({N-1, q})})\right) \right\}_{q=1}^Q
   \end{align*}\\
 Minimize the surrogate objective function w.r.t. $\theta\in \Theta$:\
 \begin{align*}
   \hat L^{(\gamma)}\left(\theta, \theta_i,  D^{(0:N-1)_{q=1}^Q} \right) =\sum\limits_{q=1}^Q \sum\limits_{k=0}^{ N-1} \max\Big[ &\frac{\pi_{\theta}\left(a^{({k, q})}| x^{({k, q})}\right)}{\pi_{\theta_{i}}\left(a^{({k, q})}|x^{({k, q})}\right)}  \hat A^{(\gamma)}_{\theta_{i}}\left(x^{({k, q})}, a^{({k, q})}\right) ,\\ &\text{clip}\left(\frac{\pi_{\theta}\left(a^{({k,q})}| x^{({k, q})}\right)}{\pi_{\theta_{i}}\left(a^{({k, q})}|x^{({k, q})}\right)},     1-\epsilon, 1+\epsilon  \right) \hat A^{(\gamma)}_{\theta_{i}}\left(x^{({k, q})}, a^{({k,q})}\right)  \Big];
   \end{align*}\\
 Update $\theta_{i+1}: = \theta$.
 }
 \caption{Proximal policy optimization with discounting}\label{alg2}
\end{algorithm}

In Algorithm \ref{alg2}, we assume that state $x^*$ has been visited $N_q$ times in the $q$th generated episode, when $q=1,..., Q$ parallel actors are available. Therefore, we estimate the present discounted value $r(x^*)$ as:
\begin{align}\label{eq:es_r}
\widehat{r(x^*)} = (1-\gamma)\frac{1}{\sum_{q=1}^Q N_q} \sum\limits_{q=1}^Q\sum\limits_{n=1}^{N_q} \sum\limits_{k=\sigma^q(n)}^{\sigma^q(n)+L} \gamma^{k-\sigma^q(n)} g(x^{({k,q})}),
\end{align}
where $\sigma^q(n)$ is the $n$th time when state $x^*$ is visited in the $q$th episode, and integer $L$ is a large enough.
If many parallel actors are available, we recommend starting the episodes from state $x^*$ to ensure that  state $x^*$ appears in the generated episodes  a sufficient number of times.

 \begin{remark}\label{rem:regVSinf}
Use the following \textit{discounted value function} as an  approximation of the solution $h_\eta$ of the Poisson equation:
\begin{align}\label{eq:disc_inf}
J^{(\gamma)}(x):=\E\left[ \sum\limits_{k=0}^{\infty} \gamma^k\left(g(x^{(k)})- \mu^Tg \right)~|~x^{(0)}=x \right] \text{ for each }x\in \X.
\end{align}
 We note that  the discounted value function  (\ref{eq:disc_inf}) and the regenerative discounted value function (\ref{eq:Vreg}) are solutions of the same Poisson equation (Lemma \ref{eq:newPoisson}). Therefore, the bias of advantage function estimator (\ref{eq:Adisc}) does not change when $h_\eta$ in (\ref{eq:A}) is replaced   either by $J^{(\gamma)}$ or by $V^{(\gamma)}$.
  The variance of the  regenerative discounted value function estimator (\ref{eq:esf}) can be \textit{potentially} smaller than the variance of the analogous $J^{AMP(\zeta), (\gamma, \lambda, N+L)}$ estimator:
  \begin{align}\label{eq:Jesf}
\hat J^{AMP( \zeta), (\gamma, \lambda, N+L)}_k  := \zeta(x^{(k)}) + \sum\limits_{t=k}^{N+L-1} (\gamma\lambda)^{t-k} \left(g(x^{({t})})  - \widehat{\mu^T_\eta g}+  \gamma \sum\limits_{y\in \X} P_{\eta}\left(y|x^{({t})}\right)  \zeta(y)   -   \zeta(x^{({t})})  \right).
\end{align}
Since the upper bound of summation in  (\ref{eq:esf}) is $\min(\sigma_k, N+L)-1$, it includes fewer summands than the summation in (\ref{eq:Jesf}) if the regeneration  frequently occurs.
See Appendix Section \ref{sec:regVSinf}, which describes a numerical experiment for the criss-cross network implying that the choice of  $J^{AMP(\zeta), (\gamma, \lambda, N+L)}$ and  $V^{AMP(\zeta), (\gamma, \lambda,N+ L)}$  estimators can affect the PPO algorithm convergence rate  to the optimal policy.
\end{remark}


The connection between our proposed AMP estimator and the GAE estimator \cite{Schulman2016} suggests another motivation for introducing the GAE estimator.
An accurate estimation of the value function $V^{(\gamma)}_\eta$  by the standard estimator  may require  a large number of state-action pairs samples from the current policy $\pi_\eta$ \cite[Section 13]{Sutton2018}, \cite{Schulman2016, Ilyas2020}. In this paper we apply the  AMP method to propose the discounted AMP estimator in (\ref{eq:esf}) that has  a smaller variance than the estimators   (\ref{eq:Vst}). 
  The AMP method, however, requires knowledge  of transition probabilities $P(y|x, a)$ in order to exactly compute the expected values for each $(x,~a,~y)~\in~\X~\times~\A~\times~\X$:
  \begin{align*}\underset{\substack{ a\sim \pi_{\eta}(\cdot|x)  \\ y\sim P(\cdot|x, a)}}{\E}  \zeta(y) =  \sum\limits_{y\in \X} \sum\limits_{a\in \A}P(y|x , a)\pi_\eta(a|x )\zeta(y). \end{align*}

One can relax the  requirement by replacing each expected value
 $\underset{\substack{ a\sim \pi_{\eta}\left(\cdot|x^{(t)}\right)  \\ y\sim P(\cdot|x, a)}}{\E}  \zeta(y)$  by its \textit{one-replication estimate}  $\zeta\left(x^{(t+1)}\right)$, where $\left(x^{(t)}, x^{(t+1)}\right)$ are two sequential states from an episode \begin{align*}
 \left(x^{(0)}, \cdots, x^{(t)}, x^{(t+1)}, \cdots\right)
 \end{align*} generated under policy $\pi_\eta$.



In Section \ref{sec:ge} we propose  the   AMP estimator of the  regenerative discounted value function (\ref{eq:es4}):
\begin{align}\label{eq:Ves2}
\hat V^{AMP( \zeta), (\gamma, \lambda)}_k =   \zeta(x^{(k)}) + \sum\limits_{t=k}^{\sigma_k-1} (\gamma\lambda)^{t-k} \left(g(x^{(t)})  - \widehat{ r(x^*)}+  \gamma \sum\limits_{y\in\X} P_{\eta}(y|x^{(t)})  \zeta(y)   -   \zeta(x^{(t)})  \right).
\end{align}

Replacing expectations $\sum\limits_{y\in \X} P_{\eta}\left(y|x^{(t)}\right)  \zeta(y)$ by the one-replication estimates $\zeta\left(x^{(t+1)}\right)$  we obtain the following estimator for the value function:
\begin{align}\label{eq:GAE}
\hat V^{GAE( \zeta), (\gamma, \lambda)}_k :=  \zeta(x^{(k) }) + \sum\limits_{t=k}^{\sigma_k-1} (\gamma\lambda)^{t-k} \left(g(x^{(k+t)})  - \widehat{ r(x^*)}+  \gamma \zeta(x^{(t+1)})   -   \zeta(x^{(t)})  \right),
\end{align}
which is a part of  the general advantage estimation (GAE) method \cite{Schulman2016}. We note that the advantage function   is estimated as $
\hat A^{GAE( \zeta), (\gamma, \lambda)}_k : = \hat V^{GAE( \zeta), (\gamma, \lambda)}_k - \zeta\left(x^{(k)}\right)$ in  \cite{Schulman2016}.

If $\lambda=1$, the estimator (\ref{eq:es4}) is transformed into  the standard estimator and does not depend on $\zeta$:
\begin{align*}
\hat V_\eta^{(\zeta), (\gamma)} (x^{(k)}):&= \zeta(x^{(k)}) \nonumber
 +  \sum\limits_{t=k}^{\sigma_k-1} \gamma^{t-k} \left(g(x^{(t) }) -\widehat{ r(x^*)}  +  \gamma \zeta(x^{(t+1)}) -\zeta(x^{(t)})  \right) \\
 &=\sum\limits_{t=k}^{\sigma_k-1}\gamma^{t-k} \left(g(x^{(t)})-\widehat{ r(x^*)}\right) .
\end{align*}
 The martingale-based control variate has no effect on the
  GAE estimator when $\lambda=1$, but it can produce significant variance reductions in the AMP estimator. See Section \ref{sec:cc} for numerical experiments.


\section{Experimental results for multiclass queueing networks}\label{sec:experiments}
In this section we evaluate the performance of the proposed proximal policy optimization Algorithms \ref{alg1},  \ref{alg1amp}, and \ref{alg2} for the multiclass queueing networks control optimization task discussed in Section \ref{sec:MQN}.

We use two separate fully connected feed-forward neural networks to represent  policies $\pi_\theta,$ $\theta\in \Theta$ and value functions $f_\psi$, $\psi\in \Psi$ with the architecture details given in Appendix Section \ref{sec:nn}.  We  refer to the neural network used to represent a policy as \textit{the  policy NN} and to  the neural network used to approximate a value function as \textit{the value function NN}. We  run the algorithm for $I=200$ policy iterations for each experiment. The algorithm uses  $Q=50$ actors to simulate data in parallel for each iteration.    See Appendix Section \ref{sec:par} for the details.

\subsection{Criss-cross network}\label{sec:cc}

 We first study the PPO algorithm and compare its base version Algorithm \ref{alg1} and its modification Algorithm \ref{alg1amp} that incorporates the  AMP method.  We check the robustness of the algorithms for the criss-cross system  with  various load (traffic) intensity regimes, including    I.L. (imbalanced light), B.L. (balanced light), I.M. (imbalanced medium), B.M. (balanced medium), I.H. (imbalanced heavy), and B.H. (balanced heavy) regimes.   Table \ref{t:lp} lists the corresponding arrival and service rates. The criss-cross network in any of these traffic regimes is stable under any work-conserving policy  \cite{Dai1996}. Since we want an initial policy to be stable, we forbid each server in the network to idle unless all its associated buffers are empty.

Table \ref{tab:cc}  summarizes the control policies proposed in the literature. Column 1 reports the load regimes, column 2 reports the
optimal performance obtained by dynamic programming (DP), column 3 reports the performance of a target-pursuing
policy (TP) \cite{Paschalidis2004} , column 4 reports the performance of a threshold policy \cite{Harrison1990}, columns 5 and 6 report the performance of fluid (FP) and robust fluid (RFP) policies respectively \cite{Bertsimas2015}, and column 7 reports the performance and the half width of the $95\%$ confidence intervals (CIs) of the PPO policy  $\pi_{\theta_I}$  resulting from the last iteration of Algorithm  \ref{alg1amp}.

We initialize the policy NN parameters $\theta_0$  using standard Xavier initialization \cite{Glorot2010}.  The resulting policy $\pi_{\theta_0}$ is close to the policy that chooses actions uniformly at random.  We take the empty system state $x^* = (0,0,0)$ as a regeneration state and simulate $N=5,000$ independent regenerative cycles  per actor in each iteration of the algorithm. Although the number of generated cycles is fixed for all traffic regimes, the length of the regenerative cycles varies  and highly depends on the load.

To show the learning curves in Figure \ref{fig:cc_opt},  we  save policy parameters $\{\theta_i\}_{i=0, 10, ..., 200}$ every 10th iteration over the course of learning. After the algorithm terminates we independently
 simulate   policies $\{    \pi_{\theta_i}: i = 0, 10, ..., 200 \}$ (in parallel) starting from the regeneration state $x = (0,..,0)$ until a fixed number of regenerative events occurs. For light, medium, and heavy  traffic regimes, we run the simulations  for $5\times10^7$, $5\times 10^6$, and  $10^6$ regenerative cycles respectively.
 We compute the $95\%-$confidence intervals   using the strongly consistent estimator of asymptotic variance. See  \cite[Section VI.2d]{Asmussen2003}.

  \begin{figure}[H]
    \subfloat[Imbalanced low (IL) traffic \label{subfig-6:IL}]{%
       \includegraphics[ height=0.19\textwidth, width=0.35\textwidth]{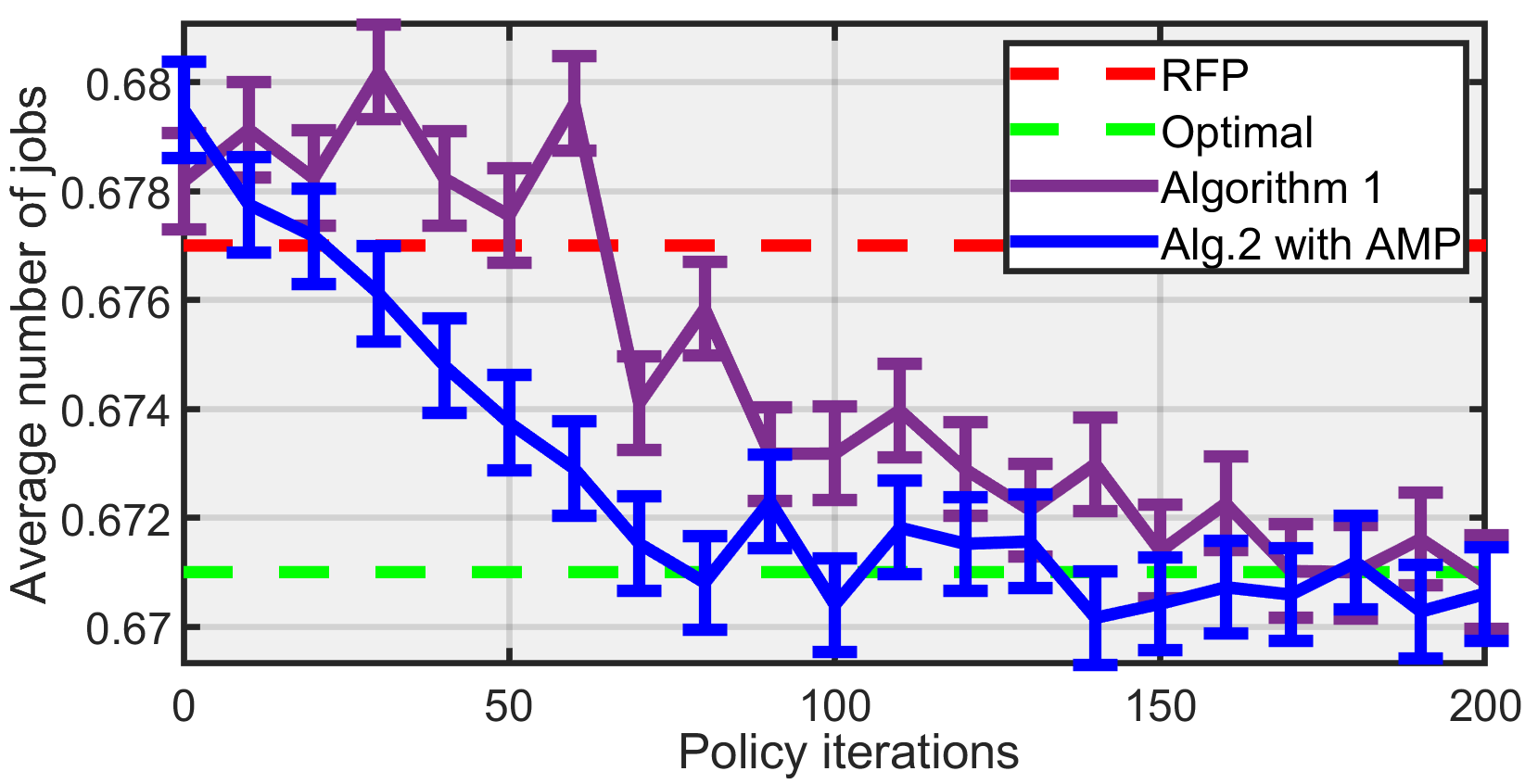}
     }
          \subfloat[Imbalanced medium (IM) traffic \label{subfig-4:IM}]{%
       \includegraphics[ height=0.19\textwidth, width=0.35\textwidth]{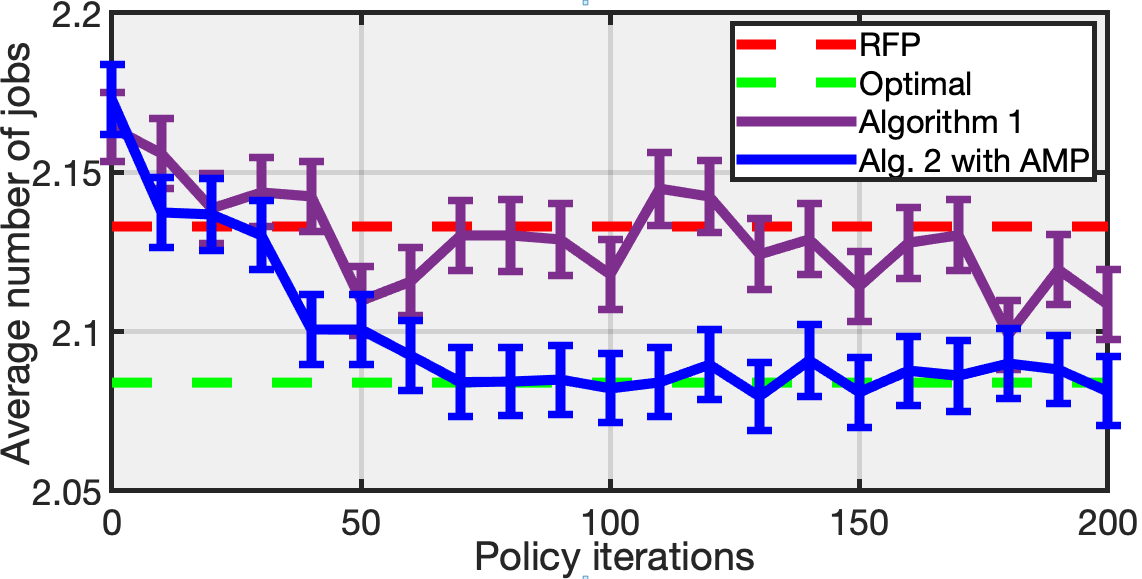}
     }
     \subfloat[Imbalanced heavy (IH) traffic \label{subfig-2:IH}]{%
       \includegraphics[ height=0.19\textwidth, width=0.35\textwidth]{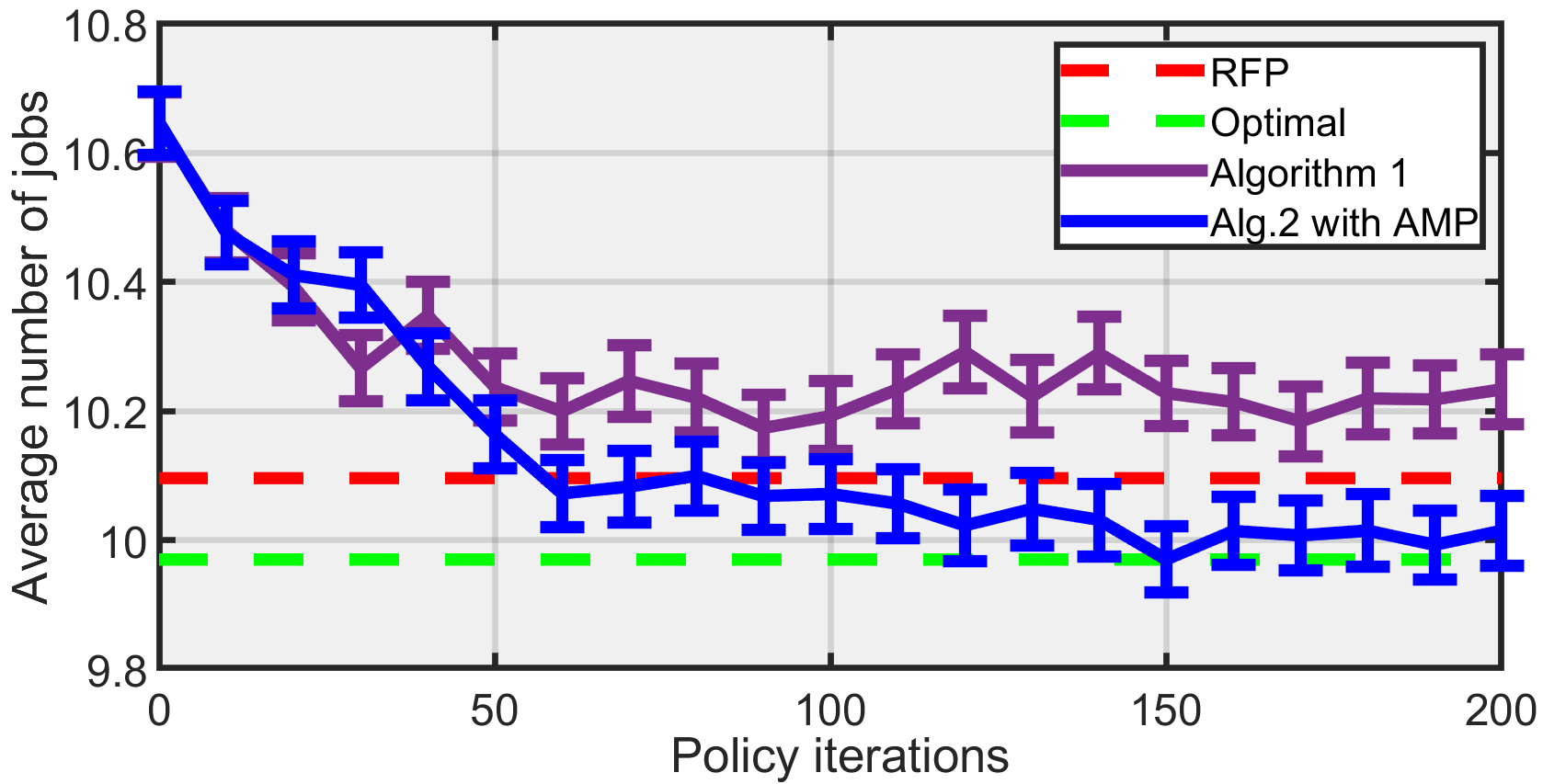}
     }\\
           \subfloat[Balanced low (BL) traffic\label{subfig-5:BL}]{%
       \includegraphics[  height=0.19\textwidth, width=0.35\textwidth]{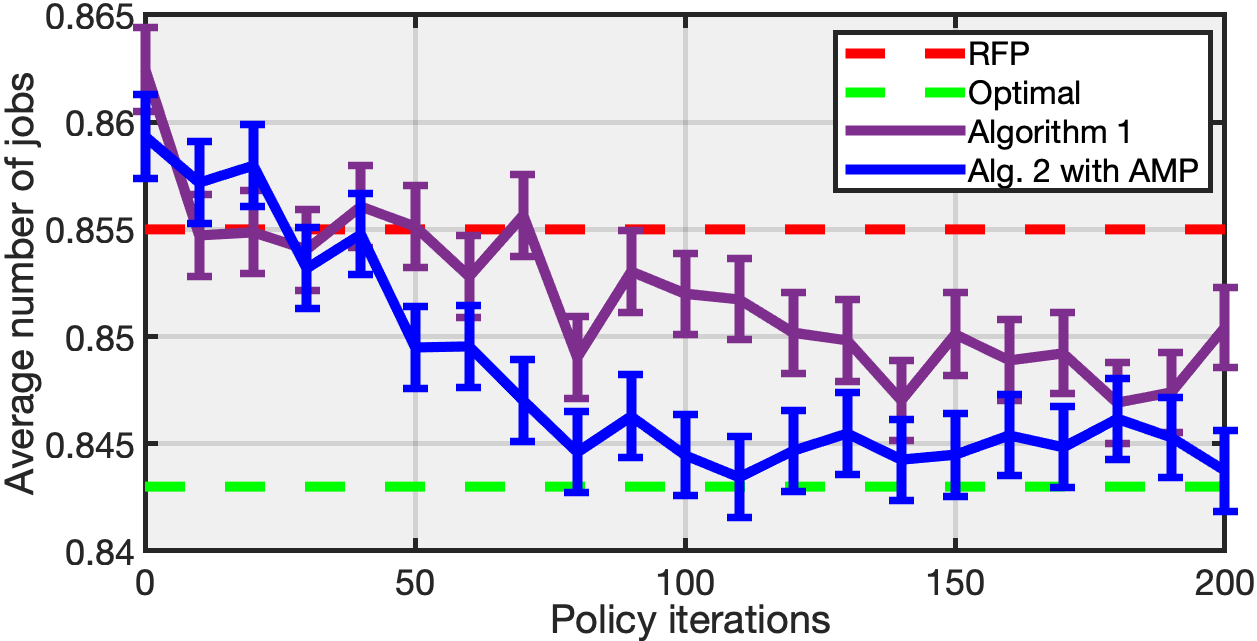}
     }
          \subfloat[Balanced medium (BM) traffic\label{subfig-3:BM}]{%
       \includegraphics[  height=0.19\textwidth, width=0.35\textwidth]{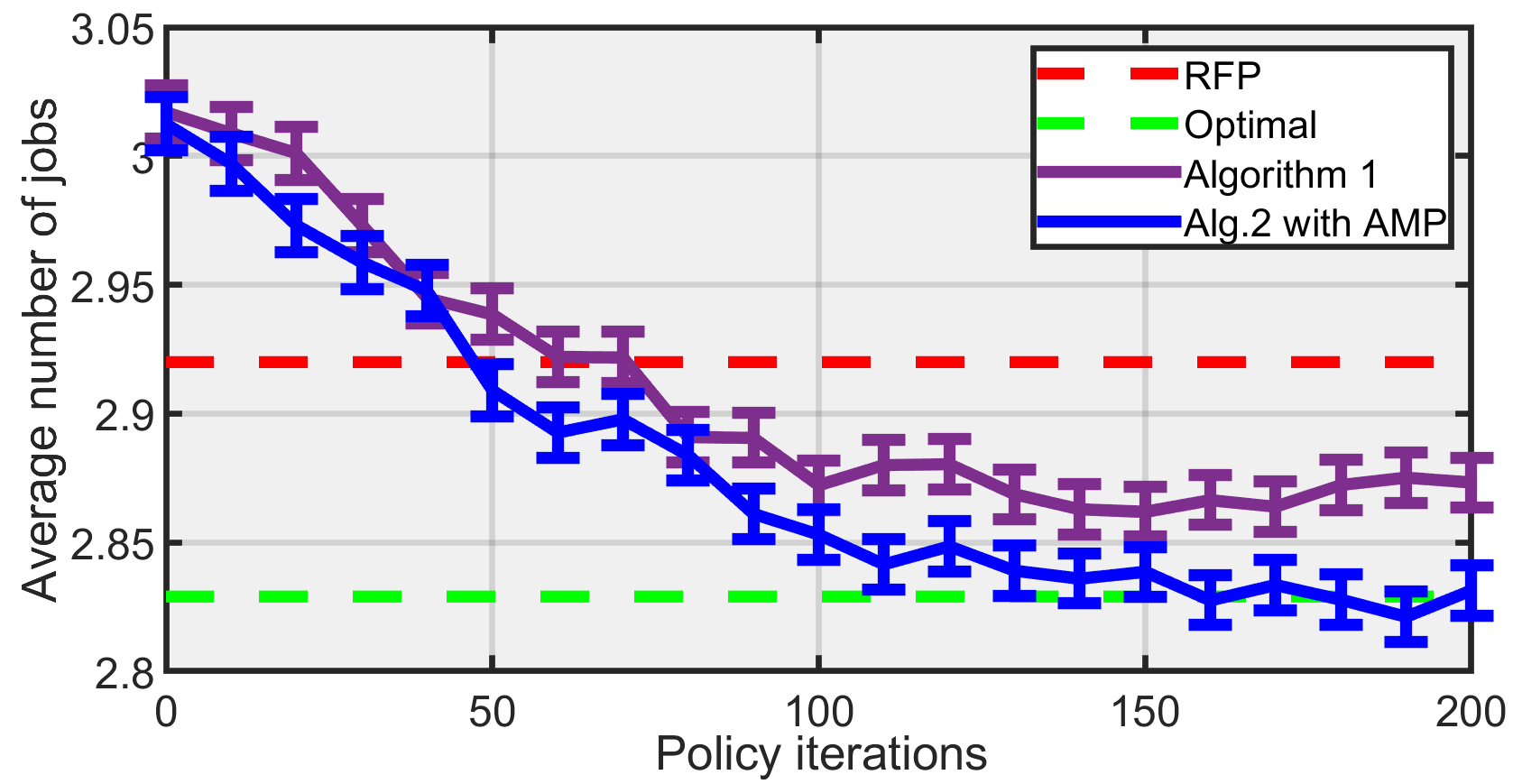}
     }
     \subfloat[Balanced heavy (BH) traffic\label{subfig-1:BH}]{%
       \includegraphics[ height=0.19\textwidth, width=0.35\textwidth]{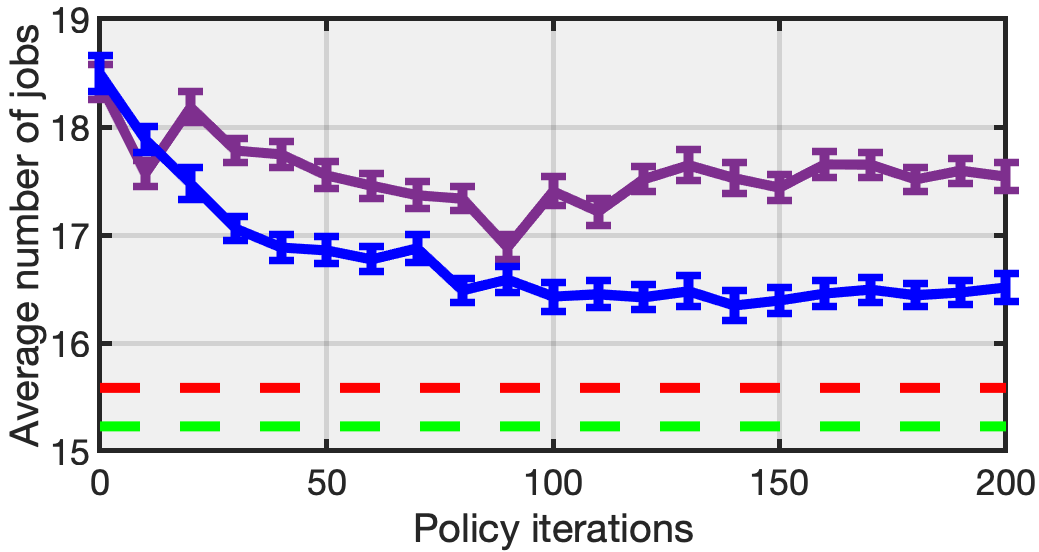}
     }

     \caption{
     Comparison of learning curves from Algorithm \ref{alg1} and Algorithm \ref{alg1amp}  on the criss-cross network with different traffic regimes.
    The solid purple and blue lines show the performance of  the PPO policies obtained at the end of every 10th iterations of Algorithm \ref{alg1} and Algorithm \ref{alg1amp}, respectively; the dashed red lines show the performance of the robust fluid policy (RFP), and the dashed green lines show the  performance of the optimal policy.  }
     \label{fig:cc_opt}
   \end{figure}
\begin{table}[H]
\centering
\begin{tabular}{|c|c|c|c|c|c|c|c|}
  \hline
  Load regime  & $\lambda_1$ & $\lambda_3 $ & $\mu_1$ & $\mu_2$ & $\mu_3$ & $\rho_1$ & $\rho_2$\\\hline
  I.L. & 0.3 & 0.3& 2 & 1.5 &2& 0.3&0.2\\\hline
  B.L. & 0.3 & 0.3 & 2 & 1 &2 &0.3&0.3\\\hline
  I.M. & 0.6 & 0.6  & 2 & 1.5 &2& 0.6&0.4\\\hline
  B.M. & 0.6 & 0.6  & 2 & 1 &2& 0.6&0.6\\\hline
  I.H. & 0.9 & 0.9 & 2 & 1.5 &2& 0.9&0.6\\\hline
  B.H. & 0.9 & 0.9 & 2 & 1&2&0.9&0.9 \\

  \hline

\end{tabular}
 \caption[]{Load parameters for the criss-cross network of Figure \ref{fig:cc}
 }\label{t:lp}
\end{table}

\begin{table}[H]
\centering
\begin{tabular}{|c|c|c|c|c|c|c|}
  \hline
  Load regime  & DP (optimal) & TP & threshold & FP & RFP & PPO (Algorithm \ref{alg1amp}) with CIs\\\hline
  I.L. & 0.671 & 0.678 & 0.679 & 0.678 &0.677&   $0.671\pm 0.001$ \\\hline
  B.L. & 0.843 & 0.856 & 0.857 & 0.857 &0.855&  $0.844\pm 0.004$\\\hline
  I.M. & 2.084 & 2.117 & 2.129 & 2.162 &2.133& $2.084\pm0.011$ \\\hline
  B.M. & 2.829 & 2.895 & 2.895 & 2.965 &2.920 &  $2.833\pm 0.010$\\\hline
  I.H. & 9.970 & 10.13 & 10.15 & 10.398 &10.096 & $10.014\pm0.055$\\\hline
  B.H. & 15.228 & 15.5 & 15.5 & 18.430 &15.585&   $16.513\pm 0.140$ \\

  \hline

\end{tabular}

 \caption[]{Average number of jobs per unit time in the criss-cross network under different policies.
  Column 1 reports the variances in the load regimes. }\label{tab:cc}%
\end{table}

   We observe that   Algorithm \ref{alg1amp} is not robust enough and converges to a  suboptimal policy when the criss-cross network operates in a balanced heavy load regime.
 We run Algorithm \ref{alg2} that uses discount factor $\gamma=0.998$, TD parameter $\lambda = 0.99$, and the AMP method for the value function estimation   at step 7.
  For each iteration we use $Q = 50$ parallel processes to generate trajectories, each with length $N=50,000$.
   We observe that Algorithm \ref{alg2} uses approximately 10 times fewer samples per iteration than  Algorithm \ref{alg1amp}. Algorithm \ref{alg2} outputs policy $\pi_{\theta_{200}}$ whose long-run average
   performance is $15.353\pm0.138$ jobs, which is lower than the RFP
     performance in \cite{Bertsimas2015}.
     We repeat the experiment with  Algorithm \ref{alg2} with the discounting, but we disable the AMP method in the value function estimation   at step 7.
      Figure \ref{fig:cc23} shows  that both variance reduction techniques (i.e. discounting, AMP method) have been necessary in Algorithm \ref{alg2} to achieve near-optimal performance.


\begin{figure}[H]
\centering%
\includegraphics[width=.5\linewidth]{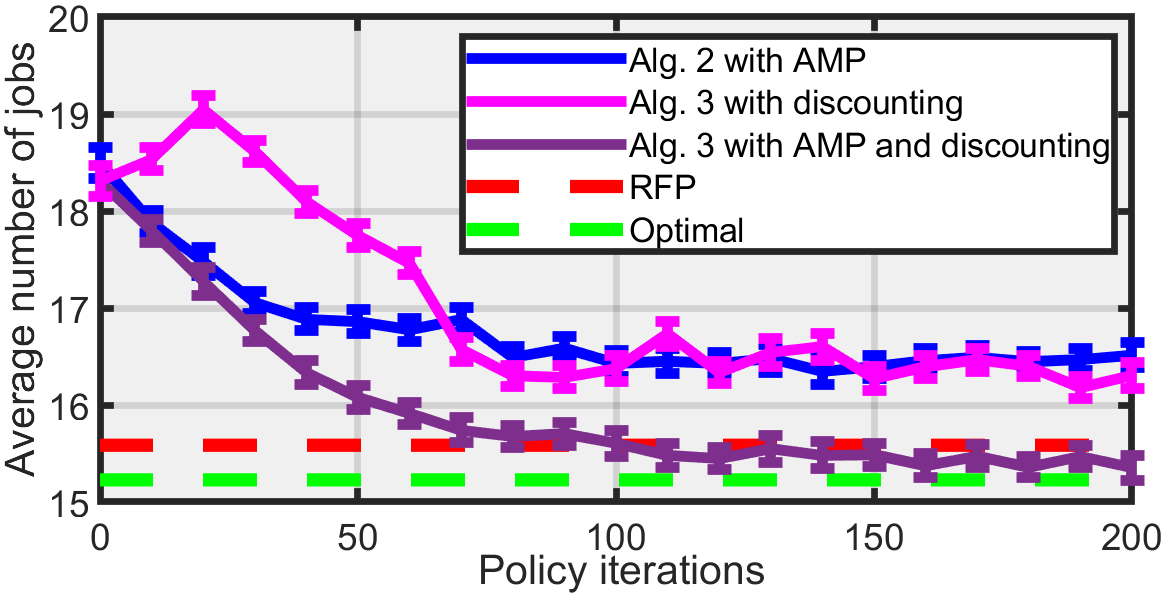}
\caption[]{ Comparison of learning curves from Algorithm \ref{alg1amp} and Algorithm \ref{alg2}  on the criss-cross network with the balanced heavy regime. The solid blue and purple lines  show the performance of  the PPO policies obtained at the end of every 10th iterations of Algorithm \ref{alg1amp} and Algorithm \ref{alg2}, respectively. The solid pink   line shows the performance of  the PPO policies obtained at the end of every 10th iterations of Algorithm \ref{alg2} without the  AMP method. The dashed red line shows the performance of the robust fluid policy (RFP), and the dashed green line shows the  performance of the optimal policy.}
\label{fig:cc23}%
\end{figure}

Recall that in the uniformization procedure, the transition matrix
$\tilde P$ in (\ref{eq:unif}) allows ``fictitious'' transitions when
$\tilde P(x|x)>0$ for some state $x\in \X$.  The PPO algorithm
approximately solves the discrete-time MDP (\ref{eq:obj4}), which
allows a decision at every state transition, including fictitious ones.
The algorithm produces randomized stationary Markovian
policies.  In evaluating the performance of any such policies, we
actually simulate a DTMC $\left\{x^{(k)}:k=0,1, 2, \ldots\right\}$ operating under
the policy, estimating the corresponding long-run average cost as in
(\ref{eq:obj4}).  There are two versions of DTMCs depending on how
often the randomized policy is sampled to determine the next
action. In version 1, the policy is re-sampled only when a \emph{real}
state transition occurs. Thus, whenever a fictitious state transition
occurs, no new action is determined and server priorities do not change in this version.
In version 2, the policy is re-sampled at  \emph{every}
transition. Unfortunately, there is no guarantee that these two
versions of DTMCs yield the same long-run average cost. See
\cite[Example 2.2]{Beutler1987} for a counterexample.

When the randomized stationary Markovian policy is optimal for the
discrete-time MDP (\ref{eq:obj4}), under an additional mild condition,
the two DTMC versions yield the same long-run average cost
\cite[Theorem 3.6]{Beutler1987}. Whenever simulation is used to estimate
the performance of a randomized policy in this paper, we use
version 1 of the DTMC. The reason for this choice is that the long-run
average for this version of DTMC is the same as the continuous-time
long-run average cost in (\ref{eq:obj3}), and the latter performance
has been used as  benchmarks in literature.
Although the final randomized stationary Markovian policy from our PPO
algorithm is not expected to be optimal, our numerical experiments
demonstrate that the performance results of the DTMC versions 1 and 2 are statistically
identical.  Table \ref{tab:cc2} reports  the performance of the DTMC versions 1 and 2  in the criss-cross network.   In Table \label{tab:cc2} column 2, the performance of Version 1 is identical to Table \ref{tab:cc} column 7.
 \begin{table}[H]
\centering
\begin{tabular}{|c|c|c|c|c|c|c|}
  \hline
  Load regime   & Version 1 performance with CIs &   Version 2 performance with CIs
  \\\hline
  I.L. &$0.671\pm 0.001$&       $0.671\pm 0.001$ \\\hline
  B.L. &  $0.844\pm 0.004$&    $0.844\pm 0.004$\\\hline
  I.M. & $2.084\pm0.011$ &       $2.085\pm0.011$ \\\hline
  B.M. &  $2.833\pm 0.010$ &      $2.832\pm 0.010$\\\hline
  I.H. & $10.014\pm0.055$ &   $9.998\pm0.054$\\\hline
  B.H. &   $16.513\pm 0.140$ &       $16.480\pm 0.137$ \\

  \hline

\end{tabular}

 \caption[]{Average number of jobs per unit time in the discrete-time MDP model of the criss-cross network under PPO policies.}\label{tab:cc2}%
\end{table}


\subsection{Extended  six-class queueing network}\label{sec:ext}

In this subsection, we consider the family of extended six-class networks from \cite{Bertsimas2015} and apply Algorithm \ref{alg2} to find good control policies.

Figure \ref{fig1} shows the structures of the extended six-class networks.
We run experiments for 6 different extended six-class queueing networks with the following traffic parameters: $\lambda_1 = \lambda_3 = 9/140$, the service times are exponentially distributed with service rates determined  by the modulus after dividing the class index  by 6 (i.e. classes associated with server 1 are served with rates $\mu_1 = 1/8$, $\mu_2 = 1/2$, $\mu_3 = 1/4$ and classes associated with server 2 are processed with service rates $\mu_4 = 1/6$, $\mu_5 = 1/7$, $\mu_6 = 1$). The service rates
for the odd servers $S_1, ..., S_{\lfloor L/2\rfloor+1}$ are the same as the service rates for server 1, while the service rates for the even
servers $S_2, ..., S_{\lfloor L/2 \rfloor}$ are the same as the service rates for server 2. The load is the same for each station and is equal to $\rho = 0.9$.

\begin{figure}[H]
  \begin{center}
 \begin{tikzpicture}[server/.style={rectangle, inner sep=0.0mm, minimum width=.8cm,
     draw=green,fill=green!10,thick},
  buffer/.style={rectangle, rounded corners=3pt,
    inner sep=0.0mm, minimum width=.9cm, minimum height=.6cm,
    draw=orange,fill=blue!10,thick}]

  \node[server,minimum width=.5in, minimum height=2in] at (6,4) (S1)
  {$S_1$};

  \node[server,minimum width=.5in, minimum height=2in] (S2)
  [right=.75in of S1.east] {$S_2$};

\node[server,minimum width=.5in, minimum height=2in] (S3)
  [right=3.0in of S1.east] {$S_L$};

  \node[buffer] (B1) [left=.2in of $(S1.west)!.5!(S1.north west)$ ] {$B_1$};
  \node[buffer] (B2) [left=.2in of S1.west ] {$B_2$};
  \node[buffer] (B3) [left=.2in of $(S1.west)!.5!(S1.south west)$ ] {$B_3$};
  \node[buffer] (B4) [left=.2in of $(S2.west)!.5!(S2.north west)$ ] {$B_4$};
  \node[buffer] (B5) [left=.2in of S2.west ] {$B_5$};
  \node[buffer] (B6) [left=.2in of $(S2.west)!.5!(S2.south west)$ ] {$B_6$};

  \node[buffer] (B7) [left=.2in of $(S3.west)!.5!(S3.north west)$ ] { $B_{3(L-1)+1}$ };
  \node[buffer] (B8) [left=.2in of S3.west ] {$B_{3(L-1)+2}$};
  \node[buffer] (B9) [left=.2in of $(S3.west)!.5!(S3.south west)$ ] {$B_{3(L-1)+3}$};

\coordinate (source1) at ($ (B1.west) + (-.2in, +.2in)$);
\coordinate (source3) at ($ (B3.west) + (-.2in, -.2in)$);
  \coordinate (departure1) at ($ (B8-|S3.east) + (+.8in, +.0in)$);
 \coordinate (departure3) at ($ (B9-|S3.east) + (+.8in, 0in)$);
  \coordinate (S3minus) at ($(B7-|S3.east)+(0.8in,.7in)$);
  \coordinate (B2plus) at ($(B2.east)+(-1in,0in)$);
 \coordinate (L11) at ($(B7-|S2.east)+(+.2in,0in)$);
 \coordinate (L12) at ($(B7.west)+(-.2in,0in)$);
 \coordinate (L21) at ($(B8-|S2.east)+(+.2in,0in)$);
 \coordinate (L22) at ($(B8.west)+(-.2in,0in)$);
 \coordinate (L31) at ($(B9-|S2.east)+(+.2in,0in)$);
 \coordinate (L32) at ($(B9.west)+(-.2in,0in)$);

  \path[->, thick]
 (B2.east) edge (B2-|S1.west)
 (B3.east) edge (B3-|S1.west)
(B1.east) edge (B1-|S1.west)
(B4.east) edge (B4-|S2.west)
 (B5.east) edge (B5-|S2.west)
(B6.east) edge (B6-|S2.west)
(B7.east) edge (B7-|S3.west)
 (B8.east) edge (B8-|S3.west)
(B9.east) edge (B9-|S3.west)
 (B2plus) edge (B2.west)
(B4-|S1.east) edge (B4.west)
(B5-|S1.east) edge (B5.west)
(B6-|S1.east) edge (B6.west);

\draw[thick, dashed]  (L11) -- (L12);
\draw[thick,->] (L12) |- (B7.west);
\draw[thick] (L11-|S2.east) |- (L11);
\draw[thick, dashed]  (L21) -- (L22);
\draw[thick,->] (L22) |- (B8.west);
\draw[thick] (L21-|S2.east) |- (L21);
\draw[thick, dashed]  (L31) -- (L32);
\draw[thick,->] (L32) |- (B9.west);
\draw[thick] (L31-|S2.east) |- (L31);
\draw [thick,->] (source1) |- (B1.west);
\draw [thick,->] (B8-|S3.east) |-(departure1);
\draw [thick,->] (source3) |- (B3.west);
\draw [thick,->] (B9-|S3.east) |- (departure3);
  \draw [thick] (S3minus) -| (B2plus);
\draw [thick] (B7-|S3.east) -| (S3minus);
  \draw [thick] (B8-|S3.east)  |-(departure1);

\node [above,align=left] at (source1.north) {class 1\\ arrivals};
\node [below,align=left] at (departure1.south) {class $3(L-1)+2$\\ departures};
\node [below,align=left] at (source3.south) {class 3\\ arrivals};
\node [below,align=left] at (departure3.south) {class $3(L-1)+3$\\ departures};

\end{tikzpicture}
  \end{center}
  \caption{The extended six-class queueing network.}
  \label{fig1}
\end{figure}
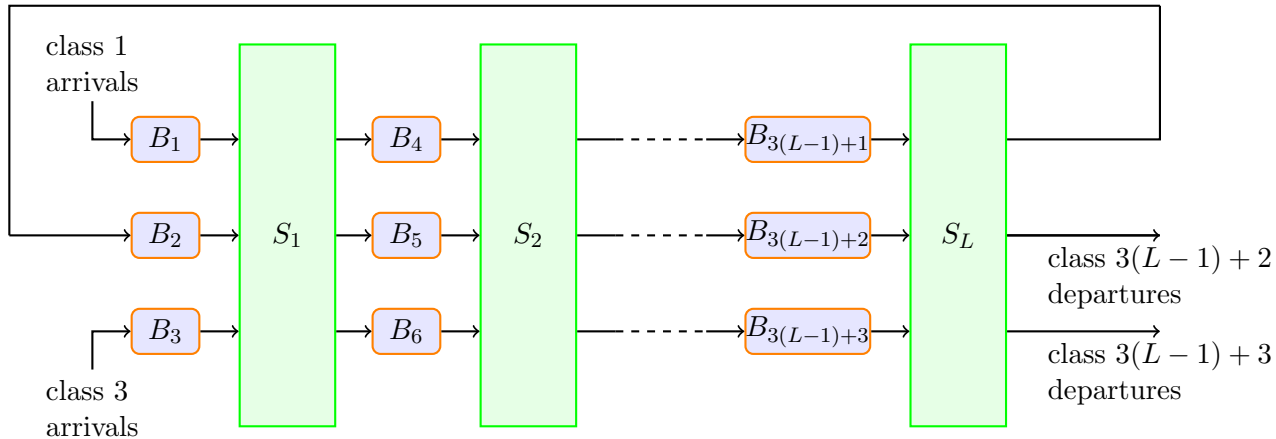

\begin{table}[tbh]
\centering%
\begin{tabular}{|c|c|c|c|c|c|}
  \hline
  No. of classes $3$L  & LBFS & FCFS & FP & RFP & PPO (Algorithm \ref{alg2}) with CIs\\\hline
  6 & 15.749 & 40.173 & 15.422 & 15.286 & $14.130\pm 0.208$\\\hline
  9 & 25.257 & 71.518 & 26.140 & 24.917& $23.269\pm0.251$ \\\hline
  12  & 34.660 & 114.860 & 38.085 & 36.857& $32.171\pm0.556$ \\\hline
  15  & 45.110  & 157.556  & 45.962 & 43.628& $39.300\pm0.612$  \\\hline
  18  & 55.724 & 203.418 & 56.857  & 52.980  & $51.472\pm 0.973$  \\\hline
  21  & 65.980 & 251.657 & 64.713 & 59.051 & $55.124\pm 1.807$\\
  \hline
\end{tabular}
\caption[]{Numerical results for the extended six-class queueing network in Figure \ref{fig1}.}
\label{tab:tab6extRes}
\end{table}

We vary the size of the network between 6 and 21 classes to test the robustness of the PPO policies. In all experiments we generate $Q=50$ episodes with $N=50,000$ timesteps.  Table \ref{tab:rt} in Appendix Section \ref{sec:par} reports the running time of the algorithm depending on the size of the queueing network. We set the discount factor and TD parameter  to $\gamma=0.998$ and $\lambda = 0.99$, respectively.
 Table \ref{tab:tab6extRes} shows the performance of the PPO policy and compares it with other heuristic methods for the extended six-class queueing networks.  In the table FP and RFP refer to fluid and robust fluid policies \cite{Bertsimas2015}.    Table \ref{tab:tab6extRes}  reports the performance of the best RFP corresponding to the best choice of policy  parameters for each class.
LBFS refers to the last-buffer first-serve policy,
where the priority at a server is given to jobs with highest index.  FCFS refers to the first-come first-serve policy, where the priority at a server is given to jobs with the longest waiting time for service.

 For each  extended six-class network we consider a corresponding discrete-time MDP.
We fix a stable, randomized policy for the discrete-time MDP,  since the use of Xavier initialization
could yield an unstable NN policy  for extended six-class networks. We refer to this  stable, randomized policy as an expert policy.
We simulate a long episode of the MDP operating under the expert policy.  At each timestep we save the state at the time
and the corresponding probability distribution over the actions.
We use this simulated data set to train the  initial NN policy $\pi_{\theta_0}$.
 In our numerical experiments, we use the \textit{proportionally  randomized (PR)}  policy as the expert policy.
 If the network operates under the PR policy, when an arrival or service completion event occurs  at station $k$, a nonempty buffer $j$ receives a
priority over other classes at the station with
probability
\begin{align}\label{eq:PRPprob}
\frac{x_j}{\sum\limits_{i\in \B(k)}  x_i},
\end{align}
 where  $\B(k)$ is a set of buffers associated with server $k$ and $x = (x_1,..., x_J)$ is the vector of jobcounts at the time of the event   after accounting for job arrivals and departures.
  The priority  stays fixed   until the next arrival or service completion event occurs.
  The PR policy is  maximally stable for open MQNs, meaning that if the system is unstable under the PR policy, no other policy can stabilize it. See Appendix Section \ref{sec:PR} for the details.  An initial NN policy  for  PPO algorithm plays an important role in its learning process.  PPO algorithm may suffer from an
exploration issue when the initial NN policy  is sufficiently far from the optimal one \cite{Wang2019}.


In  each plot in  Figure \ref{fig:ext_ac}, we save policy NN
parameters $\{\theta_i\}_{i=0, 10, ..., 200}$ every 10th policy
iteration.   For each saved policy NN, we conduct a separate long
simulation of the queueing network operating under the policy for accurate performance evaluation by providing a $95\%$
confidence interval of the long-run average cost.   For any of the six queueing networks, when the
load is high, the regeneration is rare.  Thus, we adopt the \textit{batch means} method to estimate the confidence interval
\cite[Section 6]{Henderson1997}.  For each policy from the set
$\{ \pi_{\theta_i}: i = 0, 10, ..., 200 \}$, we simulate an episode
starting from an empty state $x = (0,...,0)$ until $5\times10^6$
arrival events occur.  Then we estimate average performance of the
policy based on this episode. To compute the confidence interval from
the episode, we split the episode into 50 sub-episodes (batches), see
also \cite{Nelson1989}. Pretending that the obtained 50 mean estimates
are i.i.d., we compute the $95\%-$confidence intervals as shown
in Figure~\ref{fig:ext_ac}.
\begin{figure}[tbh]
    \subfloat[6-classes network \label{subfig-6:IL}]{%
       \includegraphics[height=0.2\textwidth, width=0.35\textwidth]{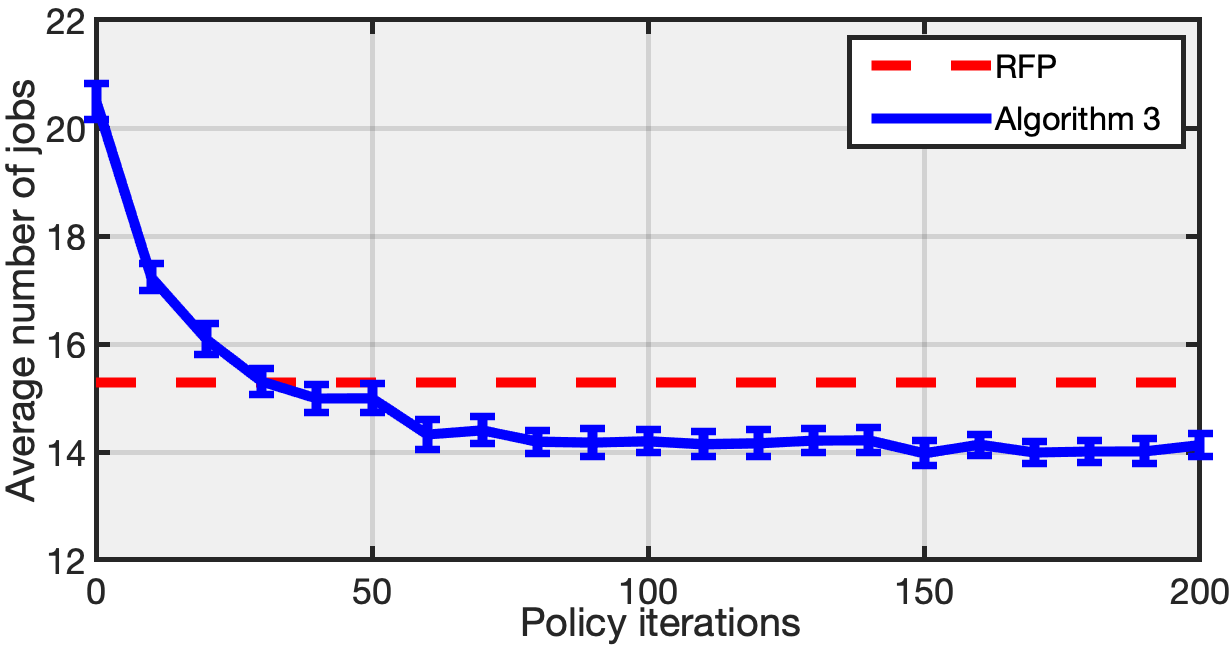}
     }
     \subfloat[9-classes network \label{subfig-4:IM}]{%
       \includegraphics[height=0.2\textwidth, width=0.35\textwidth]{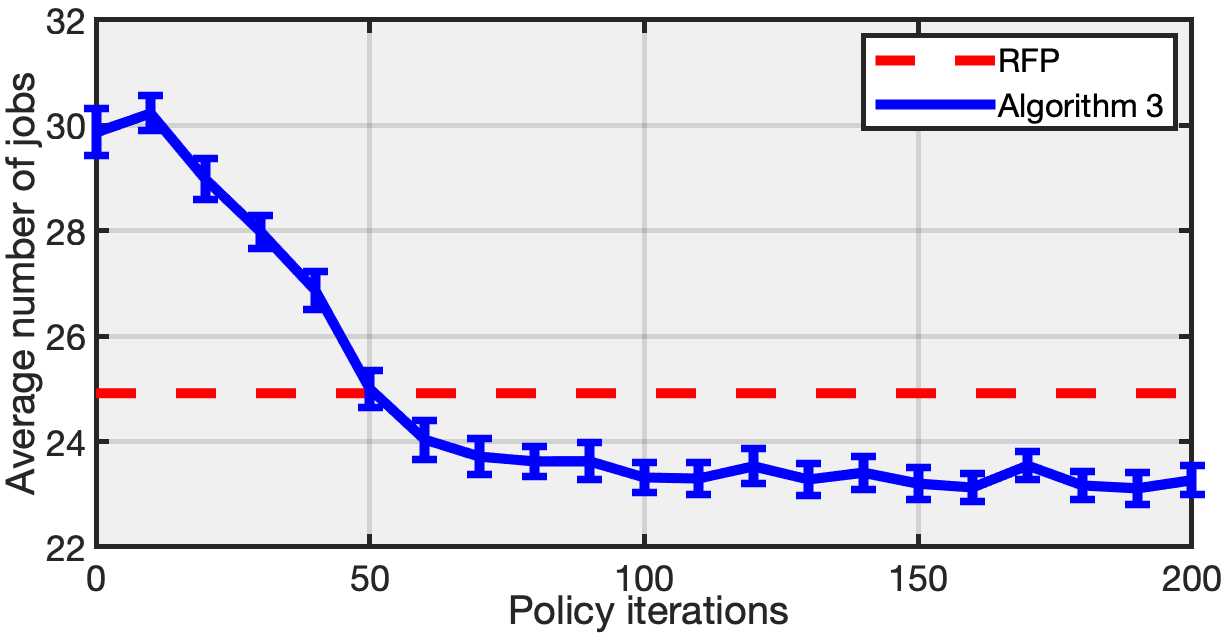}
     }
     \subfloat[12-classes network \label{subfig-2:IH}]{%
       \includegraphics[height=0.2\textwidth, width=0.35\textwidth]{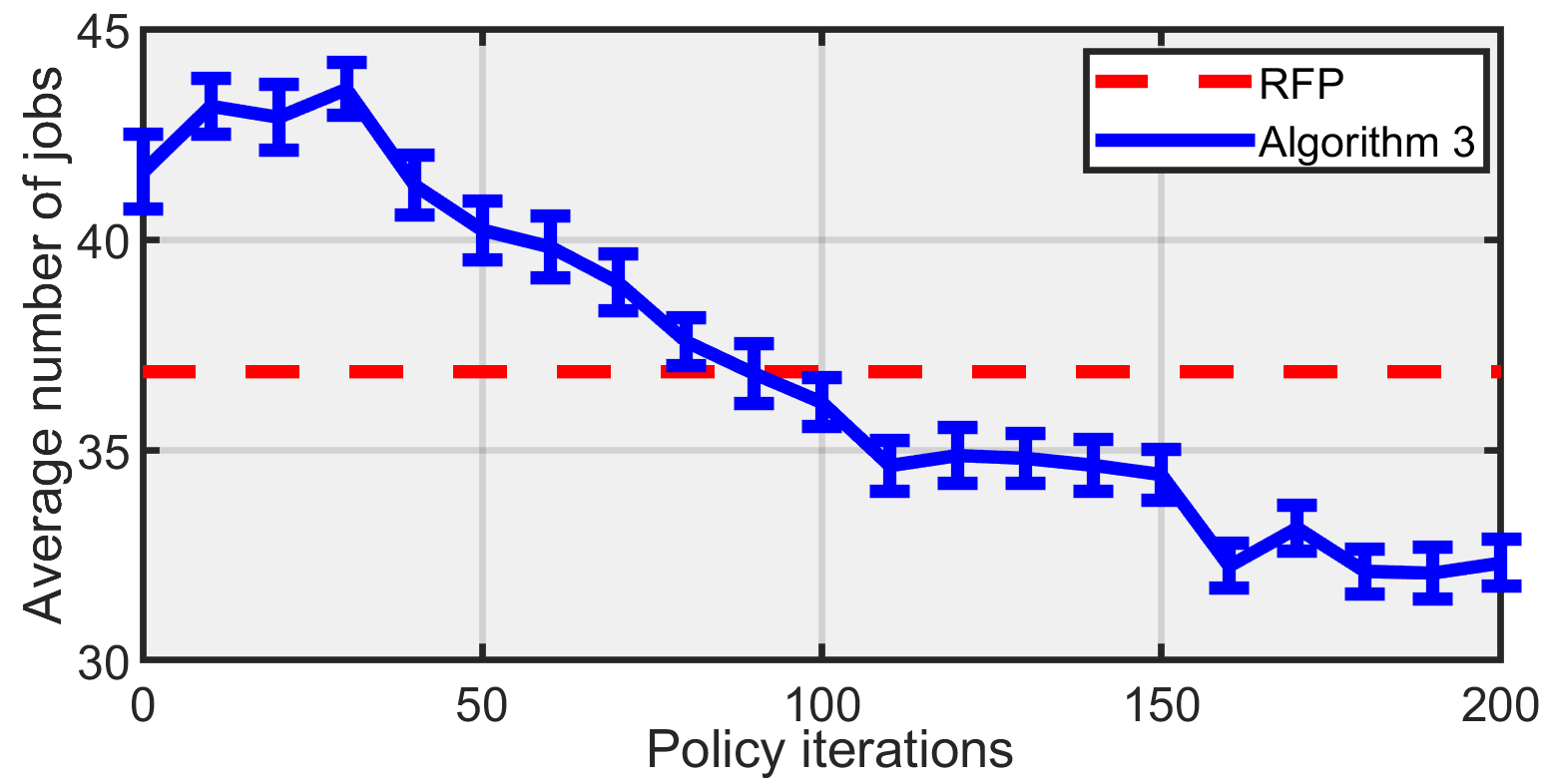}
     }\\
 \subfloat[15-classes network\label{subfig-5:BL}]{%
       \includegraphics[ height=0.2\textwidth, width=0.35\textwidth]{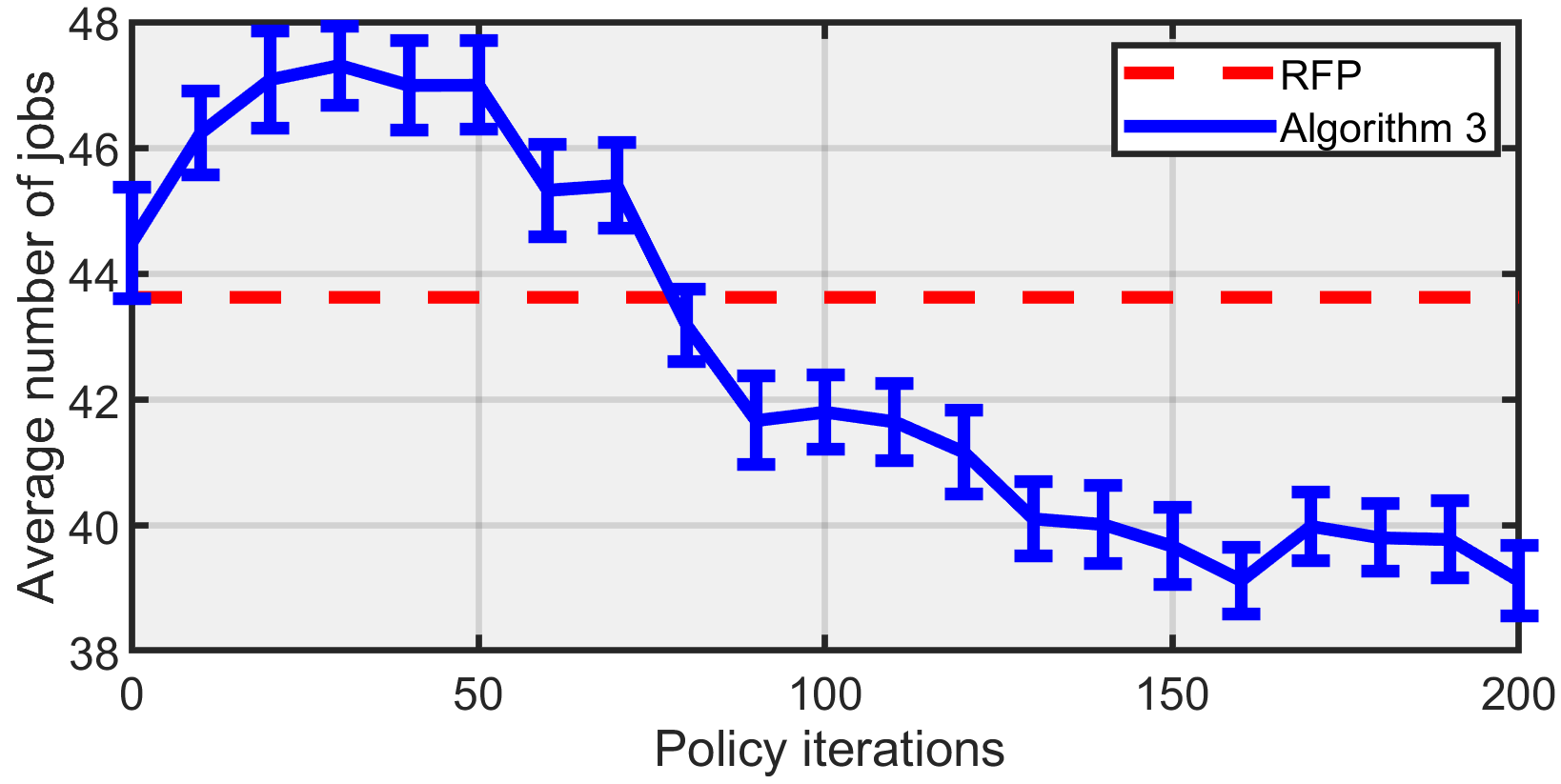}
     }
  \subfloat[18-classes network\label{subfig-3:BM}]{%
       \includegraphics[ height=0.2\textwidth, width=0.35\textwidth]{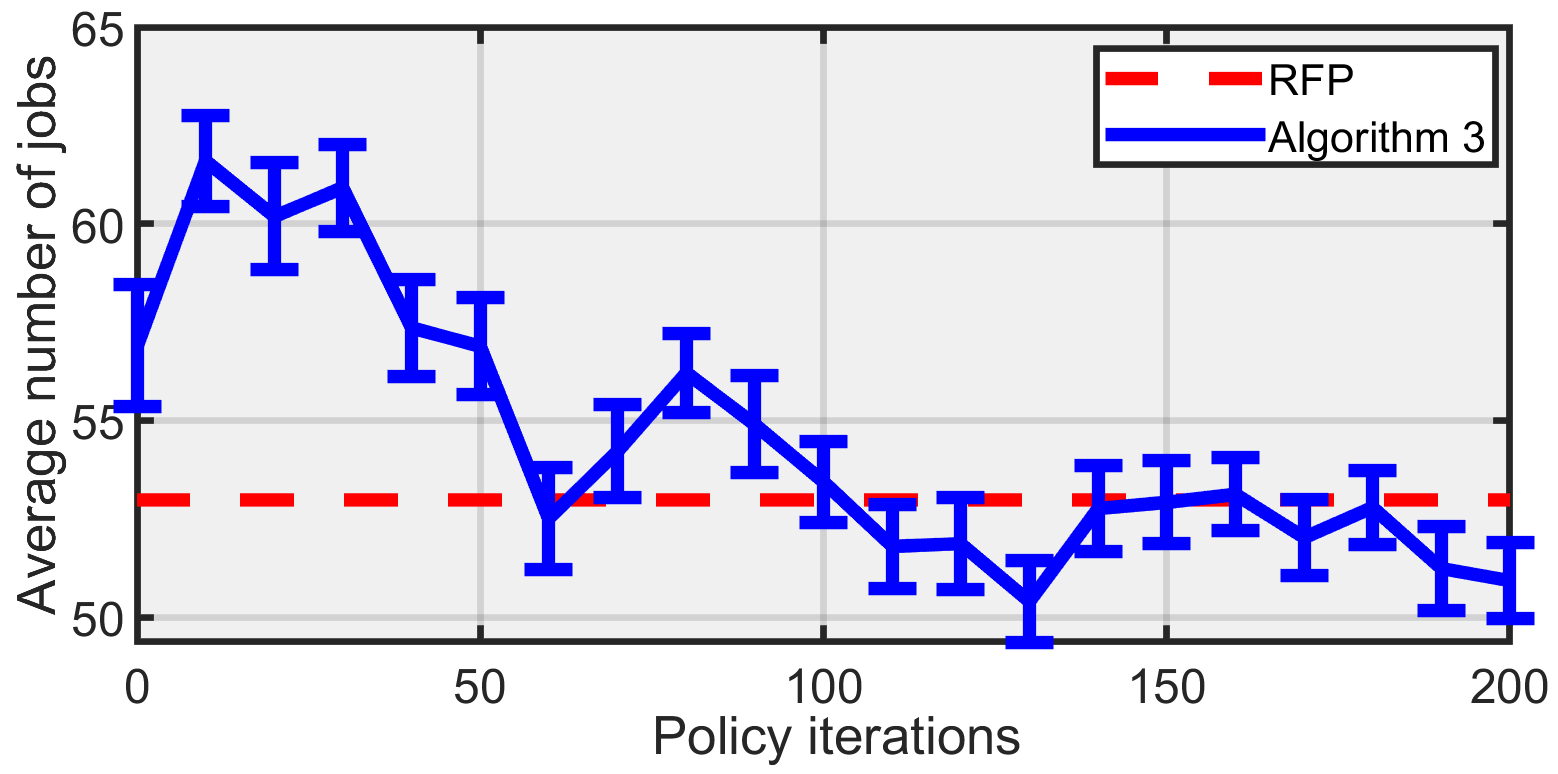}
     }
     \subfloat[21-classes network\label{subfig-1:BH}]{%
       \includegraphics[ height=0.2\textwidth, width=0.35\textwidth]{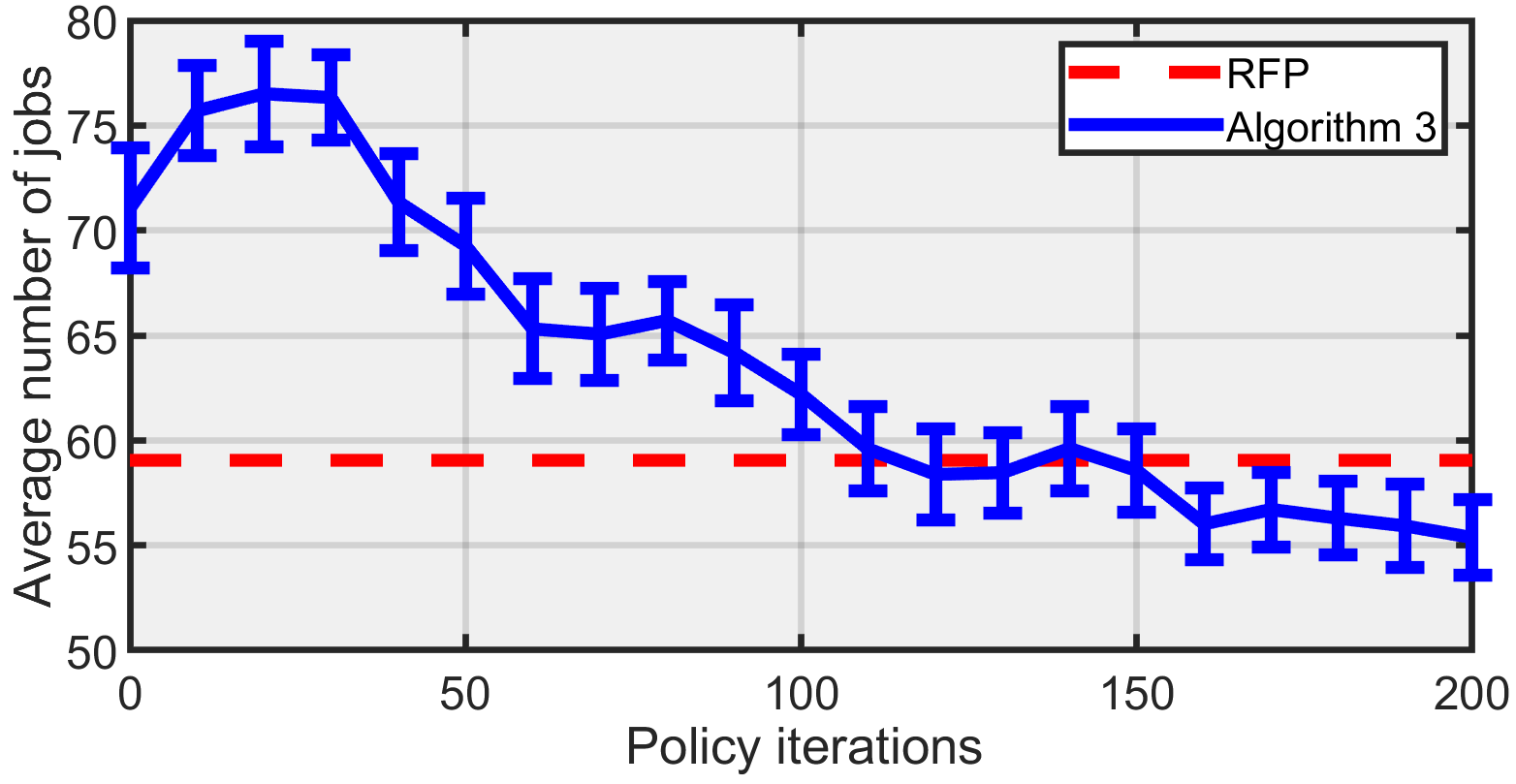}
     }
\caption{Performance of Algorithm~\ref{alg2} on six queueing networks. The solid   blue lines show the performance of  the PPO policies obtained at the end of every 10th iterations of Algorithm \ref{alg2}; the dashed red lines show the performance of the robust fluid policy (RFP).   }
     \label{fig:ext_ac}
   \end{figure}

In Section \ref{sec:ge} we discussed the relationship between the
GAE   (\ref{eq:GAE}) and  AMP  (\ref{eq:esf})  estimators.
 Figure \ref{fig:AMPvsGAE}  illustrates the benefits of using the AMP method in Algorithm \ref{alg2} on the 6-classes network.
The learning curve for the GAE estimator is obtained by replacing the value function estimation in line 6 of Algorithm \ref{alg2} with the  GAE estimator (\ref{eq:GAE}).

\begin{figure}[H]
\centering%
\includegraphics[width=.5\linewidth]{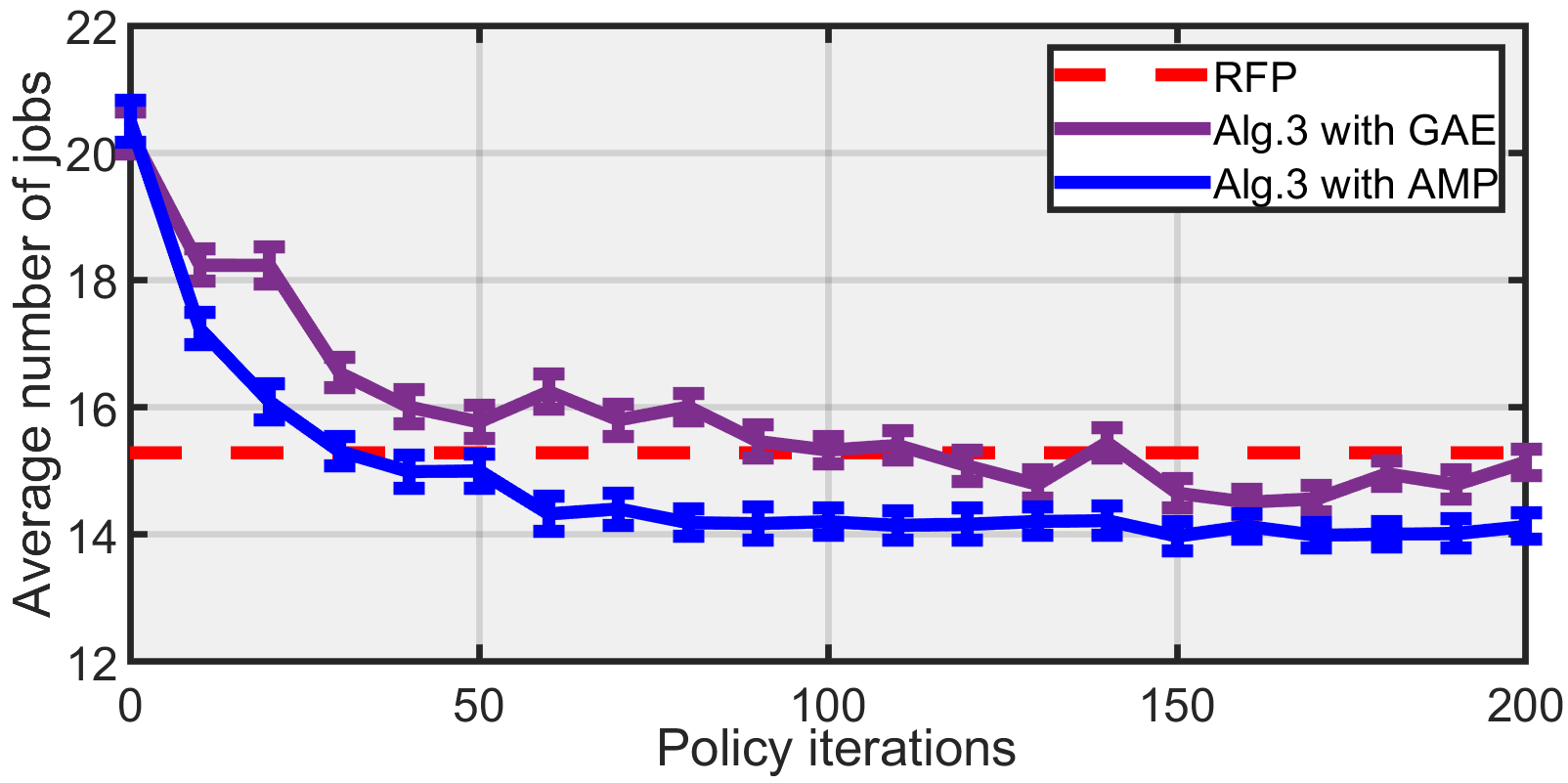}
\caption[]{Learning curves from Algorithm \ref{alg2} for the 6-class network.
The solid blue and purple lines show the performance of  the PPO policies obtained from Algorithm \ref{alg2} in which
the value function estimates are computed by the AMP method   and by the GAE method, respectively; the dashed red line shows the performance of the robust fluid policy (RFP).
}
\label{fig:AMPvsGAE}%
\end{figure}

\subsection{Parallel servers network}\label{sec:nmodel}
In this section we  demonstrate that PPO Algorithm~\ref{alg2} is also effective for a stochastic processing network that is outside the model class of multiclass queueing networks.
Figure \ref{fig:Nmodel}  shows a  processing network system  with two independent Poisson input arrival flows, two servers, exponential
service times, and linear holding costs. Known as the  \textit{N-model network}, it first appeared in \cite{Harrison1998}.

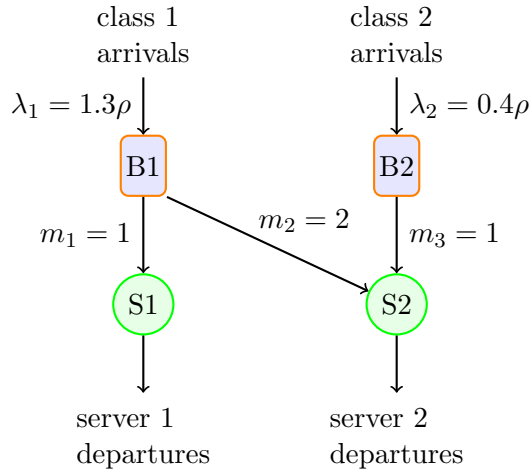
\begin{figure}[H]
  \centering
\begin{tikzpicture}[inner sep=1.5mm] 
  \node[server] at (6,4) (S1)  {S1};
  \node[server] (S2) [right=1in of S1] {S2};
  \node[vbuffer] (B1) [above=.4in of S1.north] {B1};
  \node[vbuffer] (B2) [above=.4in of S2.north] {B2};
  \coordinate (source1) at ($ (B1.north) + (0in, .3in)$) {};
  \coordinate (source2) at ($ (B2.north) + (0in, .3in)$) {};
  \coordinate (departure1) at ($ (S1.south) + (0in, -.3in)$) {};
  \coordinate (departure2) at ($ (S2.south) + (0in, -.3in)$) {};

 \draw[thick,->]    (S1) -- (departure1);
 \draw[thick,->]    (S2) -- (departure2);

  \path[->, thick]
  (source1) edge node [left] {$\lambda_1=1.3\rho$} (B1)
  (source2) edge node [right] {$\lambda_2=0.4\rho$} (B2)
   (B1) edge node [left] {$m_1=1$} (S1.north)
   (B1.south east) edge node [right,near start] {\quad $m_2=2$} (S2)
   (B2) edge node [right] {$m_3=1$} (S2.north);

   \node [inner sep=2.0mm,below,align=left] at (departure1.south east) {server 1 \\ departures};
   \node [inner sep=2.0mm,below,align=left] at (departure2.south west) {server 2\\ departures};
   \node [inner sep=2.0mm,above,align=left] at (source1.north) {class 1\\ arrivals};
   \node [inner sep=2.0mm, above,align=left] at (source2.north) {class 2\\ arrivals};
 \end{tikzpicture}
 \caption{N-model network}
  \label{fig:Nmodel}
\end{figure}

Jobs of class $i$ arrive according to a Poisson process at an average
rate of $\lambda_i$ such that $\lambda_1 = 1.3\rho$ and $\lambda_2 = 0.4\rho$ per unit time, where $\rho=0.95$ is a parameter that specifies the traffic intensity. Each job requires a single service
before it departs, and class 1 can be processed by either server 1 or server 2, whereas
class 2 can be processed only by server 2.   The jobs are processed by three different activities:

\begin{center}
activity 1 = processing of class 1 jobs by server 1,

activity 2 = processing of class 1 jobs by server 2,

activity 3 = processing of class 2 jobs by server 2.
\end{center}

We assume that  the  service at both servers is preemptive and work-conserving; specifically  we assume that activity 2 occurs only if there is at least one class 1 job in the system.

The service times for activity $i$ are exponentially distributed with mean $m_i$, where $m_1=m_3=1$ and $m_2=2$.
 The holding costs are continuously incurred at a rate of $h_j$ for each class $j$ job that remains within the system, with the specific
numerical values $h_1 = 3$ and $h_2 = 1.$ We note that all model parameter values   correspond to those in \cite{Harrison1998}.

We define $x = (x_1, x_2)\in \X$ as a system state, where $x_j$ is number of class $j$ jobs in the system. We use uniformization to convert the continuous-time control problem to a discrete-time control problem.
Under control $a = 1$ (class 1 has preemption high priority for  server 2) the transition probabilities are given by

\begin{align*}
&P\left((x_1+1, x_2)|(x_1, x_2)\right)= \frac{\lambda_1}{\lambda_1+\lambda_2+\mu_1+\mu_2+\mu_3},\\
&P\left((x_1, x_2+1)|(x_1, x_2)\right) = \frac{\lambda_2}{\lambda_1+\lambda_2+\mu_1+\mu_2+\mu_3},\\
&P\left((x_1-1, x_2)|(x_1, x_2),a= 1\right) = \frac{\mu_1\I_{\{x_1>0\}} + \mu_2\I_{\{x_1>1\}}}{\lambda_1+\lambda_2+\mu_1+\mu_2+\mu_3},\\
&P\left((x_1, x_2-1)|(x_1, x_2), a=1\right) = \frac{\mu_3\I_{\{x_2>0, x_1\leq 1\}}}{\lambda_1+\lambda_2+\mu_1+\mu_2+\mu_3},\\
&P\left((x_1, x_2 )|(x_1, x_2) \right) = 1 - P\left((x_1+1, x_2)|(x_1, x_2)\right)  - \\
 &\quad \quad -P\left((x_1, x_2+1)|(x_1, x_2)\right) -P\left((x_1-1, x_2)|(x_1, x_2) \right) - P\left((x_1, x_2 )|(x_1, x_2), 1\right),
\end{align*}
where $\mu_i  = 1/m_i$, $i=1, 2, 3.$

 Under control $a=2$ (class 2 has high priority), the only changes of transition probabilities are
 \begin{align*}
&P\left((x_1-1, x_2)|(x_1, x_2), a=2\right) = \frac{\mu_1\I_{\{x_1>0\}} + \mu_2\I_{\{x_1>1, x_2=0\}}}{\lambda_1+\lambda_2+\mu_1+\mu_2+\mu_3},\\
&P\left((x_1, x_2-1)|(x_1, x_2), a=2\right) = \frac{\mu_3\I_{\{x_2>0\}}}{\lambda_1+\lambda_2+\mu_1+\mu_2+\mu_3}.\\
\end{align*}

We define the cost-to-go function   as $g(x) := h_1x_1 +h_2x_2 = 3x_1+x_2.$  The objective is to find policy $\pi_\theta, \theta\in \Theta$ that minimizes the long-run average holding costs
\begin{align*}
\lim\limits_{N\rightarrow \infty} \frac{1}{N}  \E\left[\sum_{k=0}^{N-1}g (x^{(k)})\right],
\end{align*}
where $x^{(k)}$ is the system state after $k$ timesteps.

We use   Algorithm \ref{alg2} to find a near-optimal policy. Along with a learning curve from Algorithm \ref{alg2}, Figure \ref{fig:NmodelCurves}  shows the performance the best threshold policy with $T=11$ and the optimal policy. The threshold policy was proposed in \cite{Bell2001}. Server 2 operating under the  threshold policy  gives priority to class 1 jobs, if the number of class 1 jobs in the system is larger than a fixed threshold $T$.

 \begin{figure}[H]
\centering%
\includegraphics[width=.5\linewidth]{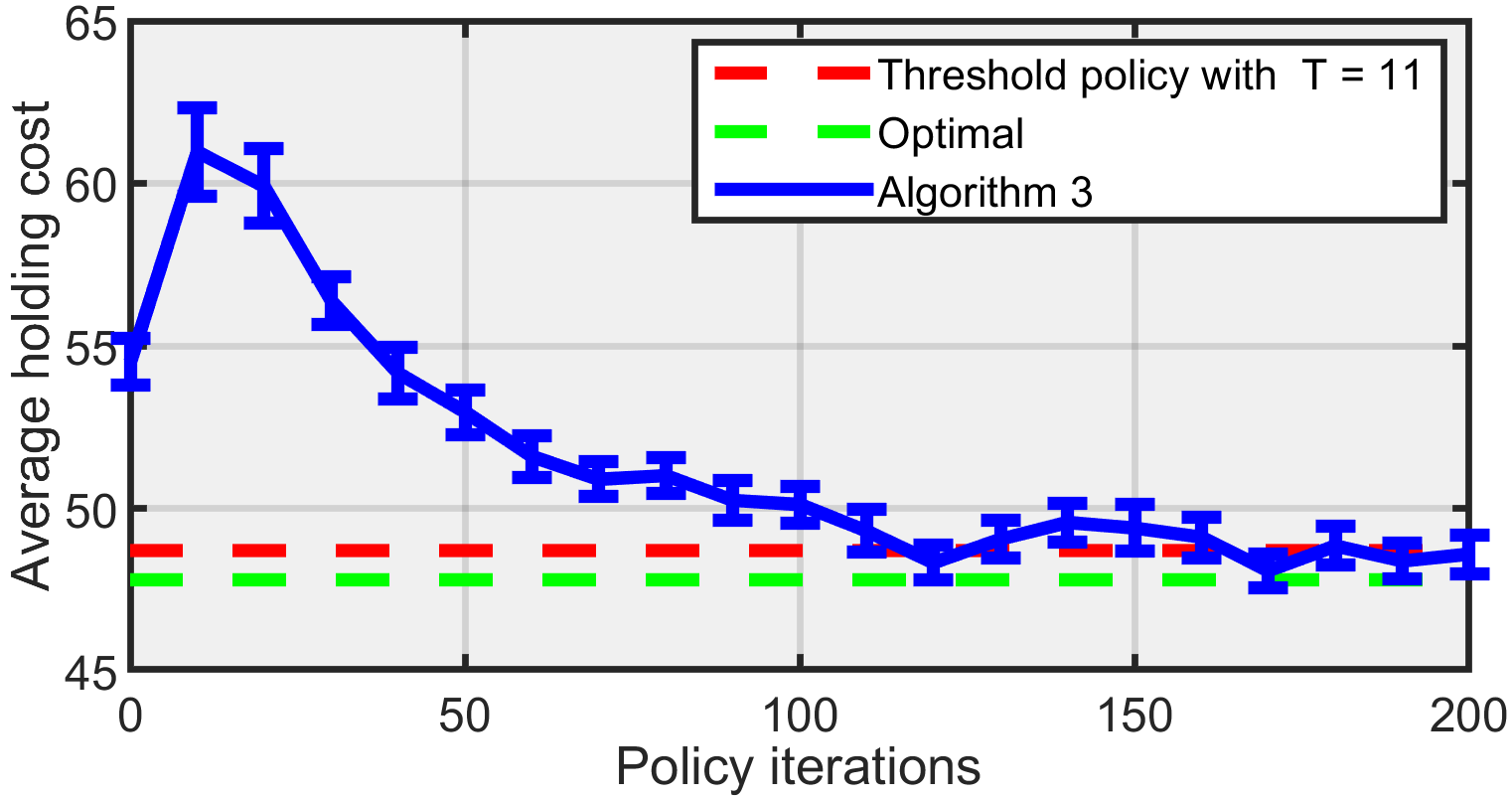}
\caption[]{Learning curves from Algorithm \ref{alg2} for the N-model network.  The blue solid line shows the performance of  PPO policies obtained from Algorithm \ref{alg2};       the dashed red line shows the performance of the threshold policy with $T=11$; the dashed greed line shows the performance of the optimal policy.}
\label{fig:NmodelCurves}%
\end{figure}

In Figure \ref{fig:NmodelPolicies} we show the control of randomized PPO policies obtained after 1, 50, 100, 150, and 200 algorithm iterations.  For each policy  we depict the probability distribution over two possible actions for states $x\in \X$ such that $0\leq x_j\leq 50,$ $j=1, 2$, and compare  the PPO, optimal, and  threshold policies.

\begin{figure}[H]
\begin{center}
    \subfloat[after 1 iteration \label{subfig-1:1}]{%
       \includegraphics[height=0.2\textwidth ]{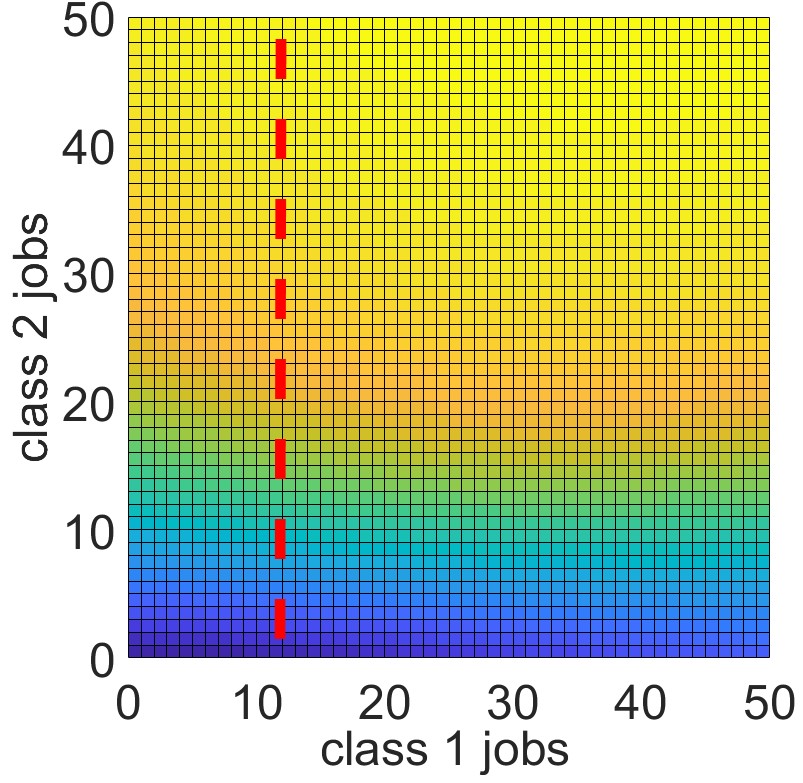}
     }\hspace{8pt}%
     \subfloat[after 50 iterations \label{subfig-2:50}]{%
       \includegraphics[height=0.2\textwidth ]{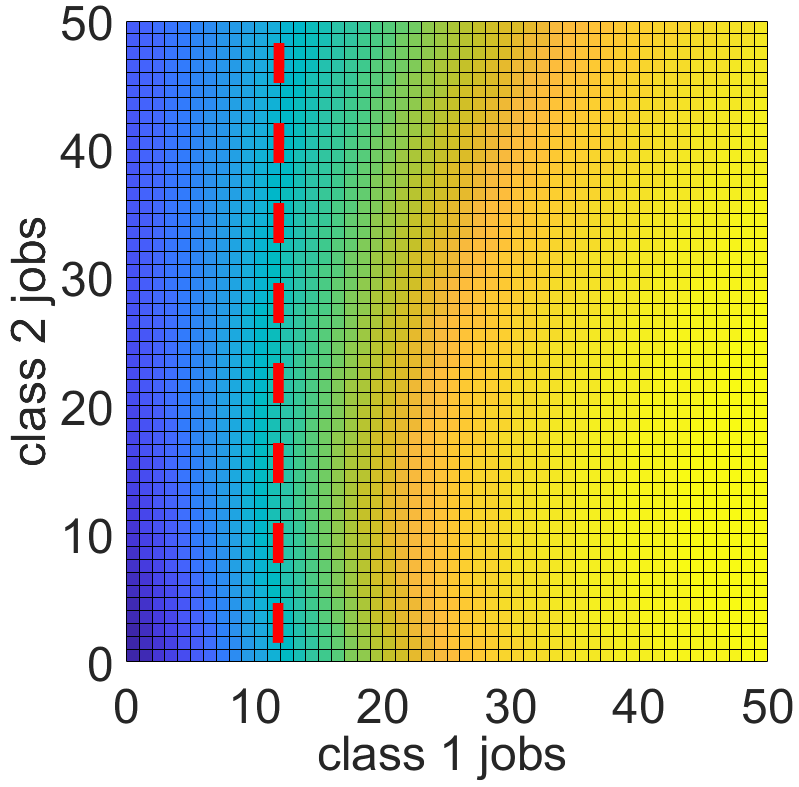}
     }\hspace{8pt}%
     \subfloat[after 100 iterations \label{subfig-3:100}]{%
       \includegraphics[height=0.2\textwidth ]{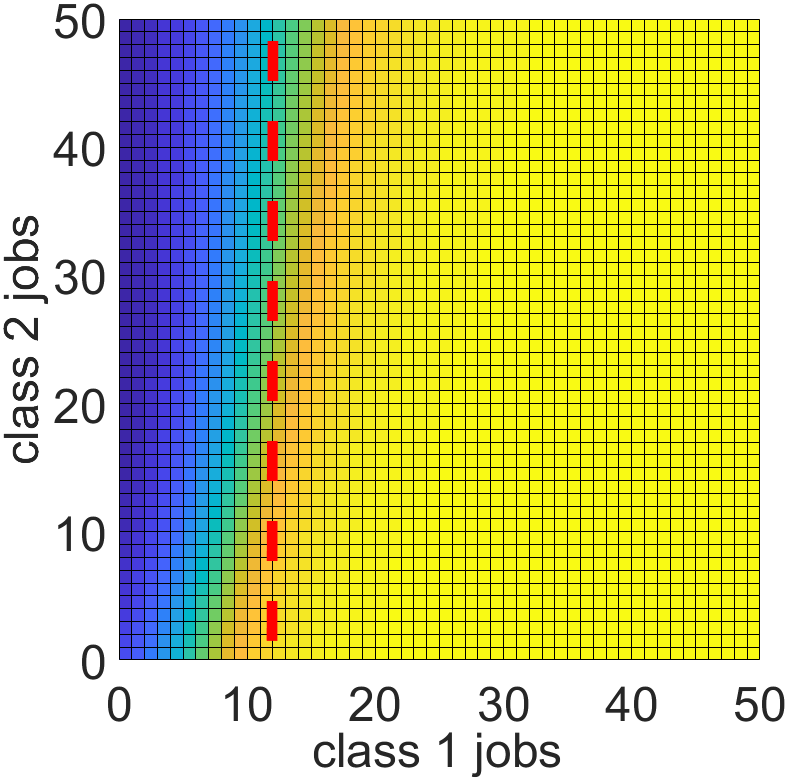}
     }\hspace{8pt}%
 \subfloat[after 150 iterations\label{subfig-4:150}]{%
       \includegraphics[ height=0.2\textwidth ]{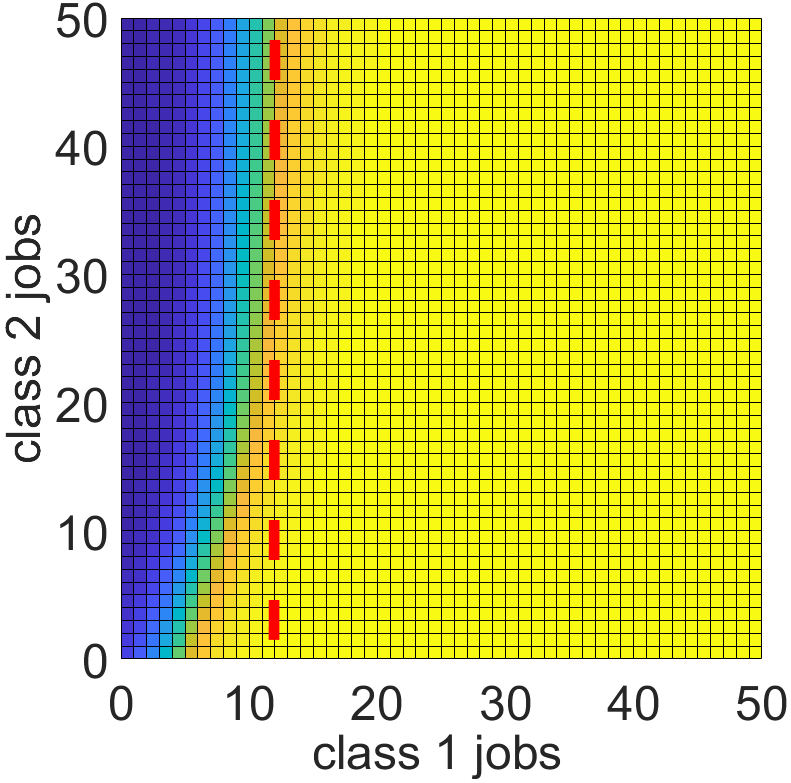}
     }\\
  \subfloat[after 200 iterations\label{subfig-5:200}]{%
       \includegraphics[ height=0.2\textwidth ]{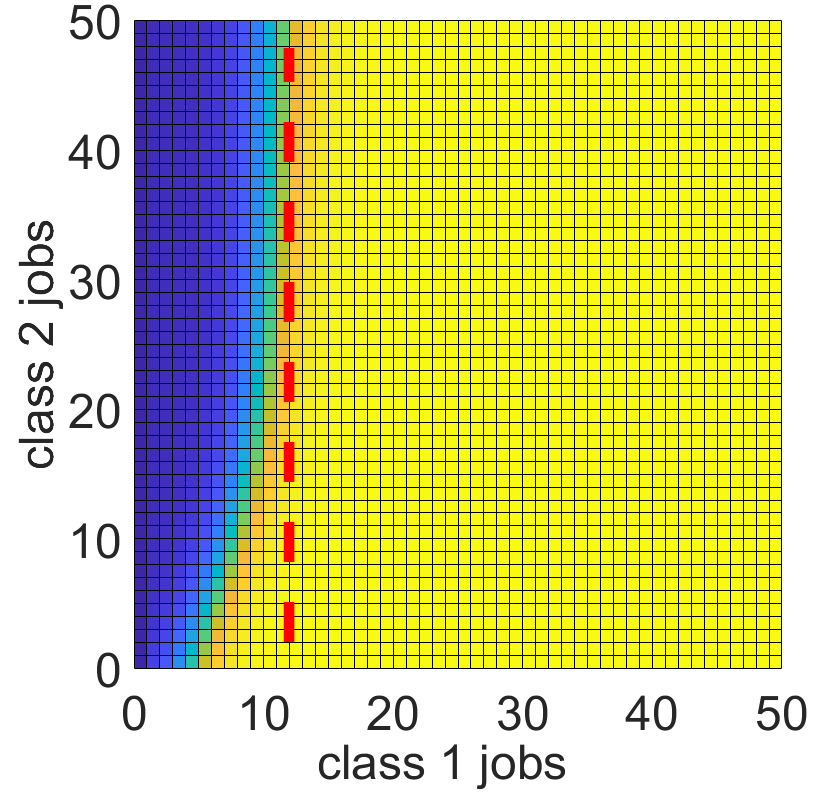}
     }\hspace{8pt}%
     \subfloat[Threshold policy \label{subfig-6:thr}]{%
       \includegraphics[height=0.2\textwidth ]{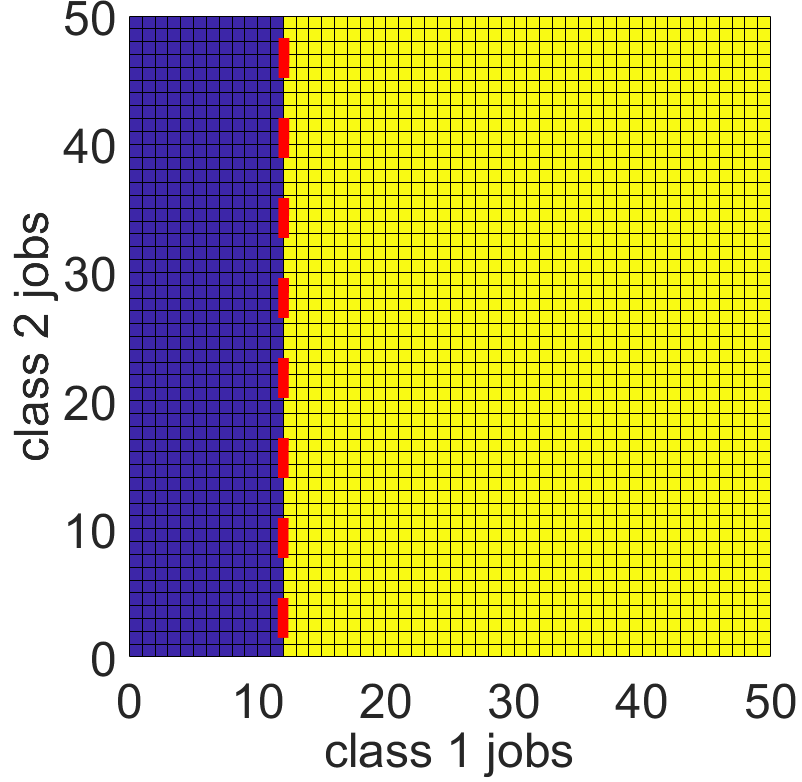}
     }\hspace{8pt}%
\subfloat[Optimal policy \label{subfig-7:opt}]{%
       \includegraphics[height=0.2\textwidth ]{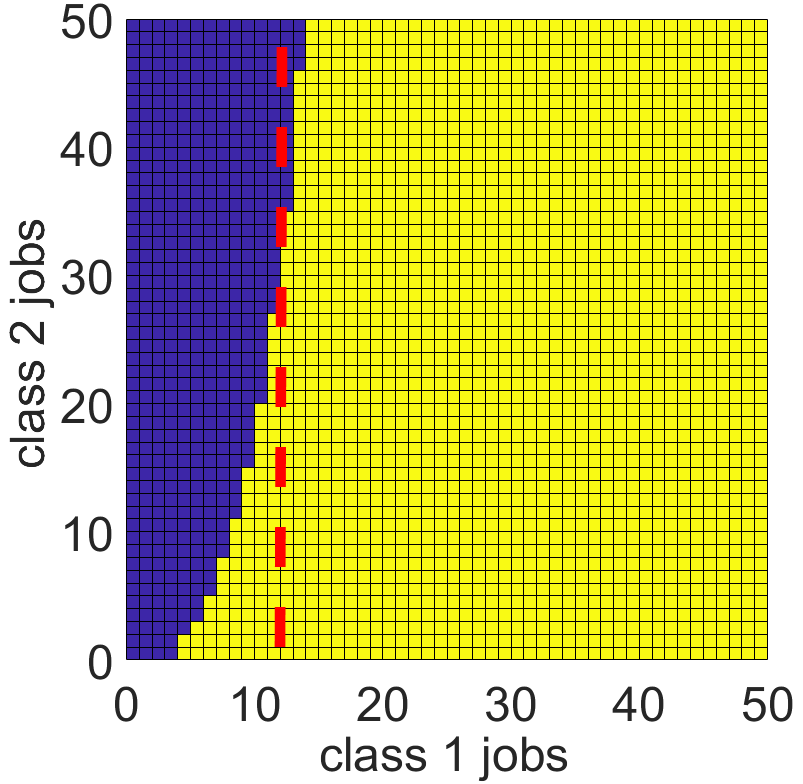}
     }\hspace{8pt}%
     \subfloat {%
       \includegraphics[height=0.2\textwidth ]{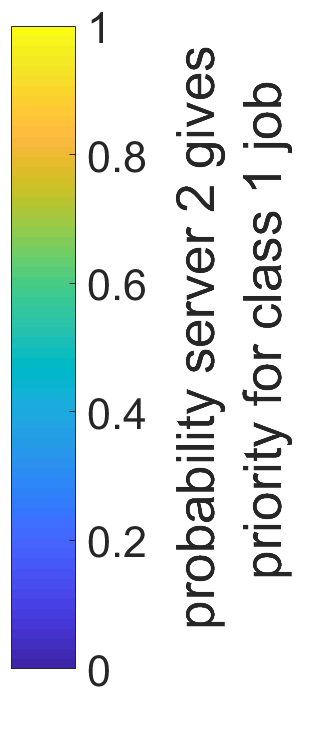}
     }
\end{center}

\caption{Evolution of the PPO policies over the  learning process and comparing them with threshold and optimal policies.  The probability that server 2 gives priority to class 1 jobs is shown by a color gradient for system states that have less than 50 jobs in each buffer:
server 2 gives priority to class 1 jobs (yellow), server 2 gives priority to class 2 jobs (blue), and the dashed red lines represent the threshold policy with $T=11.$
}
     \label{fig:NmodelPolicies}
   \end{figure}

\section{Conclusion}

This paper proposed a method for optimizing the long-run average performance in queueing network control problems. It provided a theoretical justification for extending the PPO algorithm in \cite{Schulman2017} for Markov decision problems with infinite state space,  unbounded costs, and the long-run average cost objective.  Our idea  of applying Lyapunov function approach  has a potential to be adapted for other advanced policy gradient methods. We believe that theoretical analysis and performance comparison   of policy gradient \cite{Marbach2001, Peters2008}, trust-region \cite{Schulman2015}, proximal \cite{Schulman2017}, soft actor-critic \cite{Haarnoja2018}, deep Q-learning \cite{Mnih2015, Hessel2018}, and other policy optimization algorithms  can be of great benefit for the research community and deserve a separate  study.

The success of    the PPO algorithm implementation  largely depends on the accuracy of Monte
  Carlo estimates of the relative value function in each policy
  iteration. Numerical experiments using sets of fixed hyperparameters  showed that introducing an appropriate discount factor has the largest effect on the variance reduction of a value function estimator, even though the discount factor introduces biases,  and that introducing a control variate by using the approximating martingale-process (AMP)  further reduces the variance. Moreover, when system loads vary between light and moderate, regeneration  estimators also reduce  variances. The experiments also implied that   AMP estimation, rather than   GAE estimation, is preferable when the transition probabilities are known. Most of hyperparameters, including the discount factor and length of episodes,  were  chosen by experimental tuning rather than a theoretically justified scheme. It is desirable to investigate how to select the  discount factor,  length of episodes,  and other hyperparameters  based on queueing network load, size, topology.

Our numerical  experiments demonstrated that Algorithm~\ref{alg1amp} applied for the criss-cross network control optimization can obtain policies with long-run average performance within $1\%$ from the optimal.  For large-size networks PPO Algorithm~\ref{alg2} produced effective control policies that either outperform or perform as well as the current alternatives. For an extended six-class queueing network, PPO policies outperform the robust fluid policies on average by $10\%$.  The algorithm can be applied for a processing network control problem if the  processing network admits a uniformization representation under any feasible control policy. A wide class of such processing networks  described in  \cite{DaiHarr2020}.  As an example, we provide the numerical experiment for the N-model network. Although this paper considered only queueing networks with preemptive service, the proposed algorithm can also be applied for queueing networks with non-preemptive service policies. Two modifications  to the MDP formulation in Section \ref{sec:MQN} are required  if  a queueing network operates under non-preemptive service policies. First, a service status of each server should be included into the system state representation along with the  jobcount vector. Second, at each decision time a set of feasible actions should be restricted based on each server status.

Several research questions should be pursued in future  studies. First, future research could examine the necessity  of $\V$-uniform ergodicity  assumption  in Theorem \ref{thm:main}, and whether this assumption can be replaced by $\V$-ergodicity.   Second, one of the key components  of our PPO algorithm for MQNs is the proposed PR expert policy. Further research is needed to design an algorithm that does not require the knowledge of a stable expert policy. Third, our numerical experiments show that the variance reduction techniques are important for a good
performance of the algorithm. Future investigations are desirable to develop more sample efficient simulation methods, potentially, incorporating problem structure knowledge.

Complexity of the queueing control optimization problems highly depends
not only on the network topology, but on the traffic
intensity. As the network traffic intensity
  increases the long-term effects of the actions become less predictable that presents a challenge  for any reinforcement
  learning algorithm.  We believe that multiclass queueing networks should serve as  useful benchmarks for testing reinforcement learning methods.

\section*{Acknowledgment}

This research  was supported in part by  National Science Foundation Grant CMMI-1537795.

\appendix

\section*{Appendix}

\section{Proofs of the theorems in Section \ref{sec:countable}}\label{sec:proofs}

 \begin{proof}[\textbf{Proof of Lemma \ref{lem:Zeq}}]
  We define vector $h:= Z\left(g -(\mu^T g) e\right)$. Matrix $Z$ has a finite $\V-$norm, therefore the inverse matrix of $Z$ is unique and equal to $I-P+\Pi$. Then by definition vector $h$ satisfies
\begin{align}\label{eq:extPois}
(I-P+\Pi) h = g -(\mu^T g) e.
\end{align}

Multiplying both sides of (\ref{eq:extPois}) by $\mu$ we get $\mu^T h = 0$ (and $\Pi h = 0$). Hence, vector $h$ is a solution of the Poisson equation  (\ref{eq:Poisson}) such that $\mu^T h = 0$. It follows from Lemma \ref{lem:poisson_sol} that $h=h^{(f)}.$
 \end{proof}

\begin{proof}[\textbf{Proof of Lemma \ref{lem:st}}]
We denote \begin{align}\label{eq:U}U_{\theta, \eta}: = (P_{\theta} - P_{\eta}) Z_{\eta}\end{align} and define matrix $H_{\theta, \eta}$ as
\begin{align}\label{eq:H}
H_{\theta,\eta} := \sum\limits_{k=0}^\infty U^k_{\theta, \eta}.
\end{align}
The convergence in the $\V$-weighted norm in definition (\ref{eq:H}) follows  from assumption $\|U_{\theta, \eta}\|_\V<1$.

The goal of this proof is to show that the Markov chain  has a unique stationary distribution $\mu_{\theta}$ such that
\begin{align}\label{eq:mu12}
\mu_{\theta}^T = \mu_{\eta}^T H_{\theta,\eta}.
\end{align}

We let $\nu^T :=  \mu_{\eta}^T H_{\theta,\eta}$.  We use $e = (1, 1, ...,1,...)^T$ to denote  the unit vector.  First, we  verify that $\nu^T e = \sum\limits_{x\in \X} \nu(x)=1.$ We note that  $Z_{\eta} e = e.$ Then
 \begin{align*}
 \nu ^T e = \mu_{\eta}^T H_{\theta,\eta} e =  \mu_\eta^T \sum\limits_{k=0}^\infty \left( (P_{\theta} - P_{\eta})Z_{\eta}\right)^k e  = \mu_\eta^T I e = 1.
 \end{align*}

  Second, we verify that $\nu^T P_{\theta} = \nu^T.$ We prove it by first assuming
  that \begin{align}\label{eq:Pfinite}
  P_{\theta}- P_{\eta} + U_{\theta, \eta}  P_{\eta} = U_{\theta, \eta}\end{align}  holds. Indeed,
 \begin{align*}
\nu^T P_{\theta} &=\mu_{\eta}^T \sum\limits_{k=0}^\infty U_{\theta, \eta}^k P_{\theta} \\
&=   \mu_{\eta}^T \sum\limits_{k=0}^\infty U_{\theta, \eta}^k P_{\theta}- \mu_{\eta}^T \sum\limits_{k=0}^\infty U_{\theta, \eta}^k P_{\eta} + \mu_{\eta}^T \sum\limits_{k=0}^\infty U_{\theta, \eta}^{k} P_{\eta}\\
& =  \mu_{\eta}^T +   \mu_{\eta}^T \sum\limits_{k=0}^\infty U_{\theta, \eta}^k P_{\theta}- \mu_{\eta}^T \sum\limits_{k=0}^\infty U_{\theta, \eta}^k P_{\eta} + \mu_{\eta}^T \sum\limits_{k=0}^\infty U_{\theta, \eta}^{k+1} P_{\eta} \\
& =  \mu_{\eta} ^T+ \mu_{\eta}^T \sum\limits_{k=0}^\infty U_{\theta, \eta}^k (P_{\theta}- P_{\eta} + U_{\theta, \eta}  P_{\eta})\\
& =\mu_{\eta}^T + \mu_{\eta}^T \sum\limits_{k=0}^\infty U_{\theta, \eta}^{k+1}\\
& =\mu_{\eta}^T \sum\limits_{k=0}^\infty U_{\theta, \eta}^{k}\\
& = \nu^T.
 \end{align*}

 It remains to prove (\ref{eq:Pfinite}).   Indeed,
      \begin{equation*}
\begin{aligned}[c]
   P_{\theta}- P_{\eta} + U_{\theta, \eta}  P_{\eta}& = P_{\theta}- P_{\eta} +  (P_{\theta} - P_{\eta}) Z_{\eta}   P_{\eta}\\
   & =  (P_{\theta}- P_{\eta} )(I + Z_{\eta}   P_{\eta} )\\
   & = (P_{\theta}- P_{\eta} )(I -\Pi_{\eta} + Z_{\eta}   P_{\eta} )   \\
   & =   (P_{\theta}- P_{\eta} )(I - Z_{\eta}  \Pi_{\eta} + Z_{\eta}   P_{\eta} )    \\
   & =  (P_{\theta}- P_{\eta} ) Z_{\eta} \\
   & = U_{\theta, \eta},
\end{aligned}
\end{equation*}
where the second equality follows from
  \begin{align*}
    \Big((P_{\theta} - P_{\eta}) Z_{\eta} \Big)  P_{\eta} =     (P_{\theta} - P_{\eta}) \Big(Z_{\eta}  P_{\eta} \Big),
  \end{align*}
  which holds   by \cite[Corollary 1.9]{Kemeny1976}, the third equality holds due to $(P_{\theta}- P_{\eta}) \Pi_{\eta}=0$, the fourth equality holds because $ Z_{\eta}  \Pi_{\eta} = \Pi_{\eta}$,  and the fifth equality follows from $ I = Z_{\eta}(I + \Pi_\eta - P_\eta) = Z_\eta + Z_\eta \Pi_\eta - Z_\eta P_{\eta}$.

The uniqueness of the stationary distribution follows from the fact that the Markov chain with transition matrix $P_{\theta}$ is assumed to be irreducible.

\end{proof}

 The following Lemma \ref{lem:norms} will be used in the proofs of Theorem \ref{thm:main} and Lemma  \ref{lem:policies} below. We believe the claim of  Lemma \ref{lem:norms} should be a well-known mathematical fact, but we have not found its proof in any textbook. For completeness, we present it here.

 \begin{lemma}\label{lem:norms}

Let $\M_{\X, \X}$ be a set of all matrices on the countable space $\X\times \X$.
 The operator norm $\|\cdot\|_\V$ on  $\M_{\X\times \X}$ is equivalent to the operator norm  induced from  vector norm $\|\cdot\|_{1, \V}$ and to the operator norm  induced from vector norm $\|\cdot\|_{\infty, \V}$ in the following sense:
\begin{align*}
\|T\|_\V = \sup\limits_{\nu:\|\nu\|_{1, \V}=1} \|\nu T \|_{1, \V} = \sup\limits_{h:\|h\|_{\infty,\V} = 1} \| Th \|_{\infty, \V}\quad\text{ for any }T\in \M_{\X\times \X},
\end{align*}
where $\|\nu\|_{1, \V}=\sum\limits_{x\in \X} |\nu(x)|\V(x)$, $\|\nu\|_{\infty, \V} = \sup\limits_{x\in \X} \frac{|\nu(x)|}{\V(x)}$, $\|T\|_\V=\sup\limits_{x\in \X} \frac{1}{\V(x)} \sum\limits_{y\in \X} |T(x, y)|\V(y)$.

 Furthermore, for any vectors $\nu_1, \nu_2$ on $\X$ and matrices $T_1,T_2\in \M_{\X\times \X}$ the following inequalities hold:
\begin{align}\label{eq:norms1}
\|\nu_1^T T \nu_2\|_\V\leq \|\nu_1\|_{1, \V}\|T\|_\V\| \nu_2 \|_{\infty, \V}
\end{align}
and
\begin{align}\label{eq:norms2}
\|  T_1 T_2\|_\V\leq \|T_1\|_{\V}\|T_2\|_\V.
\end{align}

\begin{proof}

First, we show that $\sup\limits_{h:\|h\|_{1, \V} = 1} \| Th \|_{1, \V} = \|T\|_\V$. On the one hand,
\begin{align*}
 \sup\limits_{\nu:\|\nu\|_{1, \V}=1} \|\nu T \|_{1, \V}  &= \sup\limits_{\nu: \X\rightarrow \R} \frac{1}{\|\nu\|_{1, \V}} \|\nu T \|_{1, \V}  \\
 &\geq\sup\limits_{ \nu\in \{e_x\}} \frac{1}{\|\nu\|_{1, \V}} \|\nu T \|_{1, \V}   \\
 &=  \sup\limits_{x\in \X} \frac{1}{\V(x)} \sum\limits_{y\in X} \V(y) |T(x, y)| = \|T\|_\V,
\end{align*}
where in the second step we choose a set $\{e_x\}$ of unit vectors $ e_x = (0,...,0,1,0,...)$, where the $x$ coordinate is $1$ and the other coordinates are $0$s.

On the other hand,
\begin{align*}
\|T\|_\V &= \sup\limits_{x\in X} \sum\limits_{y\in X}   \frac{1}{\V(x)} |T(x, y)|  \V(y) \sup\limits_{\nu:\|\nu\|_{1, \V}=1}  \sum\limits_{x\in X} |\nu(x)| \V(x)\\
&\geq \sup\limits_{\nu:\|\nu\|_{1, \V}=1}   \sum\limits_{y\in X}   \sum\limits_{x\in X}  \frac{1}{\V(x)}~|T(x, y)| ~ \V(y) ~ |\nu(x)|~ \V(x)\\
& =\sup\limits_{\nu:\|\nu\|_{1, \V}=1}   \sum\limits_{y\in X}   \sum\limits_{x\in X} |T(x, y) \nu(x)| \V(y) \\
&=\sup\limits_{\nu:\|\nu\|_{1, \V}=1} \|\nu T \|_{1, \V}.
\end{align*}

Similarly, we can show the equivalency of  $\sup\limits_{h:\|h\|_{\infty, \V} = 1} \| Th \|_{\infty, \V}$ and $\|T\|_\V$ norms.

Inequalities (\ref{eq:norms1}) and (\ref{eq:norms2}) follow from the properties of a linear operator norm.
\end{proof}
\end{lemma}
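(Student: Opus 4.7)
My plan is to separate the lemma into its two parts: first, the two operator-norm identifications asserting that $\|T\|_\V$ agrees with the operator norm of $T$ acting on the weighted $\ell^1_\V$ space (by left multiplication) and with the operator norm on the weighted $\ell^\infty_\V$ space (by right multiplication); second, the submultiplicativity-type inequalities \eqref{eq:norms1} and \eqref{eq:norms2}. Since the inequalities follow essentially by assembling the two operator-norm characterizations, the real work is in the two equalities. These are weighted analogues of the classical facts that the $\ell^1\to\ell^1$ operator norm of a matrix equals its largest (weighted) column sum and the $\ell^\infty\to\ell^\infty$ operator norm equals its largest (weighted) row sum; since $\|T\|_\V$ is \emph{defined} as a weighted row-sum supremum, both equalities should be provable by the standard two-sided argument.

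For $\|T\|_\V = \sup_{\|\nu\|_{1,\V}=1}\|\nu T\|_{1,\V}$ I would prove the ``$\ge$'' direction by expanding $\|\nu T\|_{1,\V}$, applying the triangle inequality inside the outer sum, swapping the order of summation by Tonelli (all terms are nonnegative), and finally factoring each row sum to bring it under the row-sum supremum $\|T\|_\V$. For the ``$\le$'' direction I would test against the rescaled indicator vectors $\nu_x := e_x/\V(x)$: these have $\|\nu_x\|_{1,\V}=1$, and $\|\nu_x T\|_{1,\V}$ equals the $x$-th weighted row sum divided by $\V(x)$, so taking the supremum in $x$ recovers exactly $\|T\|_\V$.

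The $\ell^\infty_\V$ identification proceeds in parallel with the row and column roles interchanged at the test-vector step. The ``$\ge$'' direction uses the triangle inequality on $(Th)(x)$, divides through by $\V(x)$, and invokes $|h(y)|\le \|h\|_{\infty,\V}\V(y)$; the ``$\le$'' direction selects, at a near-maximizing row $x$, the signed weight vector $h(y) := \mathrm{sgn}(T(x,y))\V(y)$, which has $\|h\|_{\infty,\V}=1$ and makes $(Th)(x)/\V(x)$ equal to the full weighted row sum at $x$. With the two characterizations in hand, inequality \eqref{eq:norms2} is the standard submultiplicativity proof: for any $h$ with $\|h\|_{\infty,\V}=1$, chain $\|T_1(T_2 h)\|_{\infty,\V}\le \|T_1\|_\V\|T_2 h\|_{\infty,\V}\le \|T_1\|_\V\|T_2\|_\V$ and take sup over $h$. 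Inequality \eqref{eq:norms1} I would obtain by writing $\nu_1^T T\nu_2 = \sum_x \nu_1(x)(T\nu_2)(x)$, bounding the sum by $\|\nu_1\|_{1,\V}\cdot\|T\nu_2\|_{\infty,\V}$, and then applying the $\ell^\infty_\V$ characterization to $T$ acting on $\nu_2$; here the left-hand ``$\V$-norm'' of a scalar is naturally read as absolute value.

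The only genuine obstacle is bookkeeping in the countably-infinite setting: one must verify that the products $\nu T$ and $Th$ are well-defined (absolute convergence of the defining series) and that all sum-interchanges are legitimate. Because every intermediate estimate involves sums of nonnegative terms, Tonelli's theorem does the job, and the hypotheses $\|\nu\|_{1,\V}<\infty$, $\|h\|_{\infty,\V}<\infty$, together with whichever side is assumed finite in a given step, furnish the needed majorant. No deeper functional-analytic subtlety is expected; the lemma is essentially a weighted repackaging of well-known duality between $\ell^1$ and $\ell^\infty$ operator norms for matrices.
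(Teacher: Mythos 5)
Your proposal is correct and follows essentially the same route as the paper: one direction of the $\ell^1_\V$ identity by testing against the normalized coordinate vectors $e_x/\V(x)$, the other by the triangle inequality and an interchange of (nonnegative) sums, with the $\ell^\infty_\V$ case handled symmetrically and the inequalities \eqref{eq:norms1}--\eqref{eq:norms2} deduced from the operator-norm characterizations. The only difference is that you spell out the $\ell^\infty_\V$ case (via the signed test vector $h(y)=\mathrm{sgn}(T(x,y))\V(y)$) and the two inequalities, which the paper dispatches with ``similarly'' and ``properties of a linear operator norm.''
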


\begin{proof}[\textbf{Proof of Theorem \ref{thm:main}}]
 We denote $U_{\theta, \eta}:=(P_\theta - P_\eta)Z_\eta$. Under assumption $\|U_{\theta, \eta}\|_\V = D_{\theta, \eta} <1$  operator $H_{\theta, \eta}:=\sum\limits_{k=0}^{\infty}U^k_{\theta, \eta} $ is well-defined and
 \begin{align}\label{eq:normH}
 \|H_{\theta, \eta}\|_\V\leq \frac{1}{1 - D_{\theta, \eta}}.
 \end{align}

We represent the stationary distribution of the Markov chain with transition matrix $P_\theta$  as  $\mu_\theta^T = \mu_{\eta}^T H_{\theta, \eta}$, see   (\ref{eq:mu12}).  We get
\begin{align}\label{eq:mu_theta}
 \|\mu_\theta\|^{ }_{1, \V} = \mu_\theta^T\V = \mu_\eta H_{\theta, \eta}\V\leq\|H_{\theta, \eta}\|_\V^{ } (\mu_\eta^T \V)<\infty,
\end{align}
 since $H_{\theta,\eta}\V\leq \|H_{\theta, \eta}\|_\V^{ }\V$   by definition of the $\V$-norm.

The long-run average costs difference is equal to
\begin{equation*}
\begin{aligned}[c]
\mu_\theta^Tg - \mu_\eta^Tg &= \mu_\theta^Tg+ \mu_{\theta}^T\left( (P_\theta-I)h_{\eta} \right)- \mu_\eta^Tg \\
&=\mu_{\theta}^T(g + P_\theta h_{\eta}  - h_{\eta} ) -\mu_\eta^Tg\\
&=\mu_{\eta}^T(g  - (\mu_\eta^Tg ) e + P_\theta h_{\eta}  -h_{\eta}) + (\mu_{\theta}^T - \mu_{\eta}^T)(g+P_\theta h_{\eta}  - h_{\eta} )\\
& = \mu_{\eta}^T(g  -(\mu_\eta^Tg)  e + P_\theta h_{\eta}  -h_{\eta}) + (\mu_{\theta}^T - \mu_{\eta}^T)(g - (\mu_\eta^Tg ) e +P_\theta h_{\eta}  - h_{\eta} ).
\end{aligned}
 \end{equation*}

Now we are ready to bound the last term:
\begin{equation*}
\begin{aligned}[c]
 \left|(\mu_{\theta}^T - \mu_{\eta}^T)   (g - (\mu_\eta^Tg)e +P_\theta h_{\eta}  - h_{\eta} )\right|&\leq \|\mu_{\theta} - \mu_{\eta}  \|_{1, \V}~ \|g - (\mu_{\eta}^T g)e +P_\theta h_{\eta}  - h_{\eta} \|_{\infty, \V} \\
 &=\|\mu_{\theta} - \mu_{\eta}  \|_{1, \V}~\|( P_\theta - P_{\eta}) h_{\eta}  \|_{\infty, \V}\\
 &= \|\mu_{\theta} - \mu_{\eta}  \|_{1, \V}~\| (P_\theta - P_{\eta})Z_{\eta} (g - (\mu_{\eta}^T g)e)  \|_{\infty, \V} \\
 &\leq  \|\mu_{\theta} - \mu_{\eta}  \|_{1, \V}~\| (P_\theta - P_{\eta}) Z_{\eta}\|_\V~ \| g - (\mu_\eta^Tg) e  \|_{\infty, \V}\\
 &=  D_{\theta, \eta}  \|\mu_{\theta} - \mu_{\eta}  \|_{1, \V}~\| g -(\mu_{\eta}^T g) e  \|_{\infty, \V}   \\
 &  =  D_{\theta, \eta} \|\mu_\theta^T U_{\theta, \eta} \|_{1, \V}~\| g - (\mu_\eta^Tg) e  \|_{\infty, \V},\\
 &\leq D_{\theta, \eta}  \|\mu_\theta\|_{1,\V}^{  } \|U_{\theta, \eta} \|_{ \V}~\| g - (\mu_\eta^Tg) e  \|_{\infty, \V},\\
  &\leq  D_{\theta, \eta}^2 \|\mu_\theta\|_{1,\V}^{ } \| g - (\mu_\eta^Tg) e  \|_{\infty, \V },\\
  &\leq D^2_{\theta, \eta} \|H_{\theta, \eta}\|_\V^{ }  \| g - (\mu_\eta^Tg) e  \|_{\infty, \V}  (\mu_\eta^T \V),\\
 &\leq  \frac{D^2_{\theta, \eta}}{1-D_{\theta, \eta}} \| g - (\mu_\eta^Tg) e  \|_{\infty, \V} (\mu_\eta^T \V),
\end{aligned}
 \end{equation*}
 where the first, second and third inequalities follow from Lemma  \ref{lem:norms}, the second equality follows from Lemma \ref{lem:Zeq},  the last equality holds due to
$\mu_{\theta}^T - \mu_{\eta}^T  =  \mu_\theta^T U_{\theta, \eta}$
from (\ref{eq:mu12}),   the fourth inequality follows from  (\ref{eq:D}), the fifth inequality follows from (\ref{eq:mu_theta}),  and the last inequality holds due to  (\ref{eq:normH}).

\end{proof}

\begin{proof}[\textbf{Proof of Lemma \ref{lem:policies}}]
    \begin{equation*}
\begin{aligned}[c]
     \|  (P_{\theta} - P_{\eta}) Z_{\eta}\|_\V&\leq  \|  P_{\theta} - P_{\eta} \|_\V\|Z_{\eta}\|_\V\\
     & = \|Z_{\eta}\|_\V\sup\limits_{x\in \X}\frac{1}{\V(x)} \sum\limits_{y\in \X} |P_{\theta} - P_{\eta}|_{x, y} \V(y)\\
     & = \|Z_{\eta}\|_\V\sup\limits_{x\in \X}\frac{1}{\V(x)} \sum\limits_{y\in \X} | \sum\limits_{a\in \A} P(y|x, a)\pi_\theta(a|x) - \sum\limits_{a\in \A} P(y|x, a)\pi_{\eta}(a|x)| \V(y)\\
          & \leq \|Z_{\eta}\|_\V\sup\limits_{x\in \X}\frac{1}{\V(x)} \sum\limits_{y\in \X} \sum\limits_{a\in \A} P(y|x, a) | \pi_\theta(a|x) -  \pi_{\eta}(a|x)| \V(y)\\
          & =\|Z_{\eta}\|_\V\sup\limits_{x\in \X}    \sum\limits_{a\in \A} | \pi_\theta(a|x) -  \pi_{\eta}(a|x)|    \frac{\sum\limits_{y\in \X} P(y|x, a) \V(y) }{\V(x)} \\
          &= \|Z_{\eta}\|_\V\sup\limits_{x\in \X}    \sum\limits_{a\in \A} \Big| \frac{\pi_\theta(a|x)}{ \pi_{\eta}(a|x)} - 1\Big|   G(x, a)
\end{aligned}
 \end{equation*}
\end{proof}

 \section{ Proofs of the theorems in Section \ref{sec:M1} }\label{sec:disc}

 We consider the Poisson equation for a Markov chain with the transition kernel $P$, stationary distribution $\mu$, and cost function $g:\X\rightarrow \R$:
\begin{align*}
g(x) - \mu^Tg +\sum\limits_{y\in \X} P(y|x) h(y) - h(x) = 0, \text{ for each }x\in \X,
\end{align*}
which admits a solution
\begin{align*}
h^{(x^*)}(x):=\E\left[ \sum\limits_{k=0}^{\sigma(x^*)-1} \left(g (x^{(k)} )-\mu^Tg\right)|x^{(0)}=x \right] \text{ for each }x\in \X,
\end{align*}
where $\sigma(x^*) = \min\left\{k>0~|~x^{(k)}=x^*\right\}$ is the first time when state $x^*$ is visited.

Since regenerative cycles can be long in large-size systems, we propose to change the original dynamics and increase the probability of transition to the regenerative state $x^*$ from each state $x\in \X.$

We let $P(y|x)$ be an original transition probability from state $x$ to state $y$, for each $x,y\in \X$. We consider a new Markov reward process with cost function $g$ and a modified transition kernel $\tilde P^{(\gamma)}$:
\begin{align}\label{eq:newP}
\begin{cases}
\tilde P^{(\gamma)}(y|x) := \gamma P(y|x)\quad \text{ for }y\neq x^*,\\
\tilde P^{(\gamma) }(x^*|x) := \gamma P(x^*|x)+(1-\gamma),
\end{cases}
\end{align}
for each $x\in \X.$

We  modified the transition kernel so that the probability of transition to the regenerative state $x^*$ is at least $1-\gamma$ from any state.

The Poisson equation for the modified problem is equal to:
\begin{align}\label{eq:newPoisson}
g(x) - \tilde \mu^T g +\sum\limits_{y\in \X}  \tilde P^{(\gamma)}(y|x) \tilde h(y) - \tilde h(x) = 0, \text{ for each }x\in \X,
\end{align}
 where $\tilde \mu$ is the stationary distribution of the Markov chain $\tilde P^{(\gamma)}$.

 Equation (\ref{eq:newPoisson}) admits a solution
\begin{align}\label{eq:newSol}
\tilde h^{(x^*)}(x):=\E\left[ \sum\limits_{k=0}^{\tilde \sigma(x^*)-1} \left (g (x^{(k)} )- \tilde \mu^Tg \right )~|~x^{(0)}=x \right] \text{ for each }x\in \X,
\end{align}
where   $x^{(k)}$ is the state of the Markov chain with transition matrix $\tilde P^{(\gamma)}$ after $k$ timesteps,  and $\tilde \sigma(x^*) = \min\left\{k>0~|~x^{(k)} = x^*\right\}$.
According to the new dynamics the regeneration occurs more frequently and we can estimate solution (\ref{eq:newSol})   by using  fewer replications of the regenerative simulation.

\begin{lemma}\label{lem:2sol}
Consider the Poisson equation for the Markov chain with the transition kernel $\tilde P^{(\gamma)}$ defined by (\ref{eq:newP}), stationary distribution $\tilde \mu$, and cost function $g:\X\rightarrow \R$:
\begin{align}\label{eq:newPoisson1}
g(x) - \tilde \mu^T g +\sum\limits_{y\in \X}  \tilde P^{(\gamma)}(y|x) \tilde h(y) - \tilde h(x) = 0, \text{ for each }x\in \X.
\end{align}
Equation (\ref{eq:newPoisson1}) admits solutions:
\begin{align*}
J^{(\gamma)}(x):=\E\left[ \sum\limits_{k=0}^{\infty} \gamma^k\left(g(x^{(k)})- \mu^Tg \right)~|~x^{(0)}=x \right] \text{ for each }x\in \X,
\end{align*}
and
\begin{align*}
V^{(\gamma )}(x):=\E\left[ \sum\limits_{k=0}^{\sigma(x^*)-1} \gamma^k\left(g (x^{(k)} )-r(x^*)\right )~|~x^{(0)}=x \right] \text{ for each }x\in \X,
\end{align*}
where   $x^{(k)}$ is the state of the Markov chain with transition matrix $P$   after $k$ timesteps.
\begin{proof}
We substitute the definition of $ \tilde P^{(\gamma)}$ (\ref{eq:newP}) and rewrite equation (\ref{eq:newPoisson1}) as
 \begin{align}\label{eq:newPoisson2}
g(x) - (\tilde \mu^T g - (1-\gamma) \tilde h(x^*)) + \gamma \sum\limits_{y\in \X} P(y|x) \tilde h(y)  - \tilde h(x) = 0, \text{ for each }x\in \X.
\end{align}

Equation (\ref{eq:newPoisson2}) admits infinitely many solutions, but we specify a unique solution fixing $\tilde h(x^*).$
Next, we consider two options.

First, we let $ \tilde h(x^*) = \frac{1}{1-\gamma}(\mu^Tg -\tilde \mu^T g )$. Then the Poisson equation (\ref{eq:newPoisson2}) becomes
 \begin{align*}
g(x) - \mu^T g + \gamma \sum\limits_{y\in \X} P(y|x) \tilde h(y)  - \tilde h(x) = 0, \text{ for each }x\in \X,
\end{align*}
and admits   solution
\begin{align*}
J^{(\gamma)}(x):=\E\left[ \sum\limits_{k=0}^{\infty} \gamma^k\left(g (x^{(k)} )- \mu^Tg \right)~|~x^{(0)}=x \right] \text{ for each }x\in \X.
\end{align*}

Second, we let $\tilde h(x^*) = 0$ in equation (\ref{eq:newPoisson2}). We note that $\tilde \mu^T g = r(x^*)$, where $r(x^*) = (1-\gamma)\E\left[\sum\limits_{k=0}^\infty \gamma^k g\left(x^{(k)}\right)~|~x^{(0)}=x^*\right]$ is a present discounted value at $x^*.$

We get  the Poisson equation (\ref{eq:Poiss_reg})
\begin{align*}
g(x) - r(x^*) +\gamma \sum\limits_{y\in \X} P(y|x) \tilde  h(y) -\tilde  h(x) = 0, \text{ for each }x\in \X,
\end{align*}
which admits   solution  (\ref{eq:Vreg})
\begin{align*}
V^{(\gamma )}(x):=\E\left[ \sum\limits_{k=0}^{\sigma(x^*)-1} \gamma^k\left(g (x^{(k)} )-r(x^*) \right)|x^{(0)}=x \right] \text{ for each }x\in \X.
\end{align*}

Indeed,
\begin{align*}
V^{(\gamma )}(x) &= \E\left[ \sum\limits_{k=0}^{\infty} \gamma^k\left(g (x^{(k)} )-r(x^*)\right )|x^{(0)}=x \right]\\
&=\E\left[ \sum\limits_{k=0}^{\sigma(x^*)-1} \gamma^k\left(g (x^{(k)} )-r(x^*) \right)|x^{(0)}=x \right]+ \E\left[ \gamma^{\sigma(x^*)}\E\left[\sum\limits_{k=0}^{\infty} \gamma^k\left(g(x^{(k)} )-r(x^*)\right)|x^{(0)}=x^* \right]\right]\\
&=\E\left[ \sum\limits_{k=0}^{\sigma(x^*)-1} \gamma^k\left(g (x^{(k)} )-r(x^*) \right)|x^{(0)}=x \right].
\end{align*}

\end{proof}
\end{lemma}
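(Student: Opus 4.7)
My approach is to substitute the definition of $\tilde P$ from (\ref{eq:newP}) into the Poisson equation (\ref{eq:newPoisson1}) and reduce it to a discounted Bellman equation for the original chain $P$. Since
\begin{align*}
\sum_{y\in \X} \tilde P(y|x) \tilde h(y) = \gamma \sum_{y\in \X} P(y|x) \tilde h(y) + (1-\gamma)\tilde h(x^*),
\end{align*}
equation (\ref{eq:newPoisson1}) rewrites as
\begin{align*}
g(x) - \bigl[\tilde \mu^T g - (1-\gamma)\tilde h(x^*)\bigr] + \gamma \sum_{y\in \X} P(y|x) \tilde h(y) - \tilde h(x) = 0,
\end{align*}
which is a $\gamma$-discounted Bellman equation for $P$ whose effective constant depends on the single free parameter $\tilde h(x^*)$. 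I plan to pin down this value in two different ways, each matching one of the proposed solutions.

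First I would set $\tilde h(x^*) = (\tilde \mu^T g - \mu^T g)/(1-\gamma)$, making the effective constant equal to $\mu^T g$. The reduced equation becomes the standard discounted Bellman equation for $P$ with one-step cost $g - \mu^T g$. Iterating it gives the series $J^{(\gamma)}(x) = \E[\sum_{k=0}^{\infty}\gamma^k(g(x^{(k)}) - \mu^T g)\,|\,x^{(0)} = x]$, with absolute convergence guaranteed by the $\V$-uniform ergodicity assumed in Section~\ref{sec:MC}. This immediately verifies $J^{(\gamma)}$ is a solution.

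For the second solution I would set $\tilde h(x^*) = 0$, so the effective constant becomes $\tilde \mu^T g$. The critical step is to establish the identity $\tilde \mu^T g = r(x^*)$; once that is in place, the reduced equation is $h(x) = g(x) - r(x^*) + \gamma \sum_{y} P(y|x) h(y)$, and I would verify $V^{(\gamma)}$ satisfies it by one-step conditioning on $x^{(1)}$ and invoking the strong Markov property at $\sigma(x^*)$. A useful intermediate observation is that the regenerative finite-sum form of $V^{(\gamma)}$ coincides with the infinite-sum form $\E[\sum_{k=0}^{\infty}\gamma^k(g(x^{(k)}) - r(x^*))\,|\,x^{(0)} = x]$, since $\E[\sum_{k\ge 0}\gamma^k(g(x^{(k)}) - r(x^*))\,|\,x^{(0)} = x^*] = 0$ by the definition of $r(x^*)$. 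From the infinite-sum form, the Bellman identity is routine.

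The main obstacle is verifying $\tilde \mu^T g = r(x^*)$. I would do this via a coupling argument: a $\tilde P$-trajectory starting from $x^*$ can be realized by running the original chain $P$ until an independent geometric killing time $\tau \sim \mathrm{Geom}(1-\gamma)$, at which point the chain is reset to $x^*$. Consequently, the first $\tilde P$-return time to $x^*$ equals $\min(\tau, \sigma(x^*))$. Applying the regenerative representation
\begin{align*}
\tilde \mu^T g = \frac{\E\bigl[\sum_{k=0}^{\sigma(x^*)-1}\gamma^k g(x^{(k)}) \mid x^{(0)}=x^*\bigr]}{\E\bigl[\sum_{k=0}^{\sigma(x^*)-1}\gamma^k \mid x^{(0)}=x^*\bigr]},
\end{align*}
and decomposing the series defining $r(x^*)$ at the first original return time $\sigma(x^*)$ via the strong Markov property, both expressions reduce to the same ratio, giving $\tilde \mu^T g = r(x^*)$ and completing the proof.
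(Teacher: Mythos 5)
Your proof is correct and follows essentially the same route as the paper: substitute (\ref{eq:newP}) to turn the Poisson equation for $\tilde P$ into a $\gamma$-discounted equation for $P$ whose constant is controlled by the free value $\tilde h(x^*)$, then make the two choices of that value which yield $J^{(\gamma)}$ and $V^{(\gamma)}$, verifying the latter by the same finite-sum/infinite-sum identity the paper uses. The one place you go beyond the paper is the identity $\tilde \mu^T g = r(x^*)$: the paper merely notes it, while your geometric-killing coupling combined with the regenerative ratio formula gives a correct proof of it, and this identity is precisely what makes either choice of $\tilde h(x^*)$ consistent with the value of the resulting discounted solution at $x^*$. Note also that your choice $\tilde h(x^*) = (\tilde\mu^T g - \mu^T g)/(1-\gamma)$ is the one that actually makes the effective constant equal to $\mu^T g$ and agrees with $J^{(\gamma)}(x^*)$; the paper's displayed choice has the sign reversed, which appears to be a typo.
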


\begin{proof}[\textbf{Proof of Lemma \ref{lem:disc}}]
By Lemma \ref{lem:2sol} function $ V^{(\gamma )}$ is a solution of Poisson equation (\ref{eq:newPoisson}). We consider the discounted value function  $J^{(  \gamma)}  = \E\left[ \sum\limits_{k=0}^{\infty} \gamma^k\left(g\left(x^{(k)}\right)- \mu^Tg \right)~|~x^{(0)}=x \right] $ that is another solution.

Then, for an arbitrary $x\in \X$,
\begin{align}\label{eq:VJ}
\left|V^{(\gamma)}(x) - h^{(x^*)}(x)\right| \leq \left|J^{(\gamma)}(x) - h^{(f)}(x)\right| + \left|V^{(\gamma)}(x) - h^{(x^*)}(x)  - \left(J^{(\gamma)}(x) - h^{(f)}(x)\right)\right|,
\end{align}
where $h^{(f)}$ is the fundamental solution of the Poisson equation (\ref{eq:Poisson}).

First, we  bound $\left|J^{(\gamma)}(x) - h^{(f)}(x)\right|$:
\begin{align*}
|J^{(\gamma)}(x) - h^{(f)}(x) | &\leq \sum\limits_{t=0}^\infty |\gamma^{t} - 1| \Big| \sum\limits_{y\in \X} P^t(y|x) (g(y) - \mu^T g)   \Big| \\
&\leq R \V(x) \sum\limits_{t=0}^\infty (1  - \gamma^{t}) r^t\\
&= R\V(x)r\frac{1-\gamma}{(1-r)(1-\gamma r) },
\end{align*}
where the second inequality follows from (\ref{eq:geo}).

 Since  $V^{(\gamma)}$ and $J^{(\gamma)}$ are both solutions of the Poisson equation (\ref{eq:newPoisson}), therefore,
\begin{align*}
J^{(\gamma)}(x) - V^{(\gamma)}(x) =J^{(\gamma)}(x^*) - V^{(\gamma)}(x^*) = \frac{1}{1-\gamma}(\mu^T g - r(x^*)) \text{ for each }x\in \X.
\end{align*}

Similarly, $ h^{(f)}(x) - h^{(x^*)}(x) = h^{(f)}(x^*)$ for each $x\in \X.$

Second, we bound the last term in inequality (\ref{eq:VJ}):
\begin{align*}
\Big|V^{(\gamma)}(x) - h^{(x^*)}(x) & - \left(J^{(\gamma)}(x) - h^{(f)}(x)\right)\Big|\\
&=\left|h^{(f)}(x^*) - \frac{1}{1-\gamma}(r(x^*)-\mu^T g ) \right|\\
&=\left| \sum\limits_{t=0}^{\infty}\sum\limits_{y\in \X} P^t(y|x^*)(g(y)-\mu^Tg) - \sum\limits_{t=0}^{\infty}\sum\limits_{y\in \X}  \gamma^tP^t(y|x^*)(g(y)-\mu^Tg)  \right|\\
&=\left|  \sum\limits_{t=0}^{\infty}\sum\limits_{y\in \X}  (1-\gamma^t) P^t(y|x^*)(g(y)-\mu^Tg) \right|\\
&\leq    \sum\limits_{t=0}^{\infty} |1-\gamma^t| \left|\sum\limits_{y\in \X} P^t(y|x^*)(g(y)-\mu^Tg) \right|  \\
&\leq R\V(x^*)r\frac{1-\gamma}{(1-r)(1-\gamma r) },
\end{align*}
where the last inequality holds due to (\ref{eq:geo}).
\end{proof}

\begin{proof}[\textbf{Proof of Lemma \ref{lem:var}}]

We denote $\overline g(x^{(k)}): = \left(g (x^{(k)} )- r (x^*) \right )$.
 Following   \cite[Section 17.4.3]{Meyn2009}, we can show that
\begin{align*}
Var[ \hat V^{(\gamma )}(x)] &= \E\left[  \left(\sum\limits_{k=0}^{\sigma(x^*)-1} \gamma^k \overline  g (x^{(k)} ) \right)^2~\Big|~x^{(0)}=x \right] -\left( V^{(\gamma)}(x)\right)^2 \\
&=\E\left[ -\sum\limits_{k=0}^{\sigma(x^*)-1} \gamma^{2k}\overline g^2(x^{(k)}) + 2\sum\limits_{k=0}^{\sigma(x^*)-1} \sum\limits_{j=k}^{\sigma(x^*)-1}\gamma^{k+j}\overline g(x^{(k)})\overline g(x^{(j)}) ~\Big|~x^{(0)}= x \right] -\left( V^{(\gamma)}(x)\right)^2 \\
&=\E\left[ \sum\limits_{k=0}^{\sigma(x^*)-1}  \E\left[ 2  \sum\limits_{j=k}^{\sigma(x^*)-1}\gamma^{k+j}\overline g(x^{(k)})\overline g(x^{(j)})  - \gamma^{2k}\overline g^2(x^{(k)}) ~|~\F_k\right]~\Big|~x^{(0)}= x \right] -\left( V^{(\gamma)}(x)\right)^2 \\
&=\E\left[ \sum\limits_{k=0}^{\sigma(x^*)-1}  2\gamma^{2k}\overline g(x^{(k)})\E\left[   \sum\limits_{j=k}^{\sigma(x^*)-1}\gamma^{j-k}\overline g(x^{(j)})   ~|~\F_k\right]- \gamma^{2k}\overline g^2(x^{(k)})~\Big|~x^{(0)}= x \right] -\left( V^{(\gamma)}(x)\right)^2 \\
&= \E\left[ \sum\limits_{k=0}^{\sigma(x^*)-1}  2\gamma^{2k}\overline  g(x^{(k)} ) V^{(\gamma)}(x^{(k)})-\gamma^{2k} \overline g^2(x^{(k)}) ~\Big|~x^{(0)}= x \right] -\left( V^{(\gamma)}(x)\right)^2,
\end{align*}
where $\F_k$ is a $\sigma-$algebra generated by $x_0, x_1, ..., x_k$.

Hereafter, we denote $\sum\limits_{y\in \X}P(y|x) V^{(\gamma)}(y)$ and $\sum\limits_{y\in \X}P(y|x) \left(V^{(\gamma)}(y)\right)^2$  as $PV^{(\gamma)}(x)$ and $P\left(V^{(\gamma)}\right)^2(x)$, respectively,  to improve readability. We use the Poisson equation (\ref{eq:Poiss_reg}) and replace $ \overline g(x^{(k)} )$ by $V^{(\gamma)}(x^{(k)}) -\gamma P V^{(\gamma)}(x^{(k)})$.
\begin{align*}
  &\E\left[ \sum\limits_{k=0}^{\sigma(x^*)-1}  2\gamma^{2k}\overline  g(x^{(k)} ) V^{(\gamma)}(x^{(k)})-\gamma^{2k} \overline g^2(x^{(k)})  ~\Big|~x^{(0)}= x \right] -\left( V^{(\gamma)}(x)\right)^2\\
&=\E\left[ \sum\limits_{k=0}^{\sigma(x^*)-1}  \gamma^{2k}\left(2\left(V^{(\gamma)}(x^{(k)}) -\gamma P V^{(\gamma)}(x^{(k)})\right)V^{(\gamma)}(x_k)-  \left(V^{(\gamma)}(x^{(k)}) - \gamma PV^{(\gamma)}(x^{(k)})\right)^2\right)~\Big|~x^{(0)}= x \right] -\left( V^{(\gamma)}(x)\right)^2\\
&=\E\left[ \sum\limits_{k=0}^{\sigma(x^*)-1}  \gamma^{2k}\left(\left(V^{(\gamma)}(x^{(k)})\right)^2  -  \left(\gamma PV^{(\gamma)}(x^{(k)})\right)^2 \right)~\Big|~x^{(0)}= x \right] -\left( V^{(\gamma)}(x)\right)^2.
\end{align*}
Next, we subtract the expectation of  a martingale
\begin{align*}
\E\left[  -\left( V^{(\gamma)}(x)\right)^2 +\gamma^{2\sigma(x^*)}\left(V^{(\gamma)}(x^{(\sigma(x^*))})\right)^2 +\sum\limits_{k=0}^{\sigma(x^*)-1}  \gamma^{2k}\left(\left(V^{(\gamma)}(x^{(k)})\right)^2  -  \gamma^2P\left(V^{(\gamma)}\right)^2(x^{(k)}) \right)~\Big|~x^{(0)}= x \right]
\end{align*}
 that is equal to zero by \cite[Proposition 1]{Henderson1997}. Since $V^{(\gamma)}(x^{(\sigma(x^*))})=0$, we get
\begin{align*}
Var[ \hat V^{(\gamma )}(x)] &=\E\left[ \sum\limits_{k=0}^{\sigma(x^*)-1}  \gamma^{2k}\left(\gamma^2 P\left(V^{(\gamma)}\right)^2(x^{(k)}) -    \left( \gamma PV^{(\gamma)}(x^{(k)})\right)^2\right)~\Big|~x^{(0)}= x \right]\\
  &=\gamma^2\E\left[ \sum\limits_{k=0}^{\sigma(x^*)-1}  \gamma^{2k}\left( Var\left[V^{(\gamma)}(x^{(k+1)})~|~x^{(k)}\right]\right)~\Big|~x^{(0)}= x \right],
\end{align*}
where $Var\left[V^{(\gamma)}(x^{(k+1)})~|~x^{(k)}\right] = \sum\limits_{y\in \X}P(y|x^{(k)}) \left(V^{(\gamma)}(y)\right)^2 - \left(\sum\limits_{y\in \X}P(y|x^{(k)}) V^{(\gamma)}(y)\right)^2.$

Next, we want to show that there exists   constant $B_1>0$ such that for any  $\gamma\in [0,1]$
 \begin{align*}\left(V^{(\gamma)}(x)\right)^2\leq B_1 \V(x)\quad \text{for each }x\in \X.\end{align*}
First, we recall that function $V^{(\gamma)}$ is a solution of Poisson equation (\ref{eq:newPoisson}) for the system with modified dynamics (\ref{eq:newP}) and cost function $g$. Given that transition matrix $P$ satisfies the drift condition (\ref{eq:drift}), for the modified dynamics we have the following drift inequality
\begin{align*}
\sum\limits_{y\in \X}\tilde P^{(\gamma)}(y|x)\V(y) \leq b \V(x) +(\V(x^*)+d)\I_{C\cup \{x:b \V(x)\leq \V(x^*)\}}(x)\quad \text{for each }x\in \X,
\end{align*}
for any $\gamma\in [0,1].$
Indeed,
\begin{align*}
\sum\limits_{y\in \X}\tilde P^{(\gamma)}(y|x)\V(y) &=\gamma \sum\limits_{y\in \X} P(y|x)\V(y) +(1-\gamma)\V(x^*)\\
&\leq \gamma b  \V(x) +(1-\gamma)\V(x^*) +d\I_C\\
&\leq b \V(x) +(\V(x^*)+d)\I_{C\cup \{x\in \X:b \V(x)\leq \V(x^*)\}}(x),
\end{align*}
where the first inequality follows from the drift condition (\ref{eq:drift}), and the second inequality follows from the fact that $ \gamma b \V(x) +(1-\gamma)\V(x^*) \leq b \V(x)$ if $b \V(x)\geq \V(x^*)$, and $ \gamma b\V(x) +(1-\gamma)\V(x^*) \leq   \V(x^*)$ otherwise.

Second, we use Jensen's inequality and get that function $\sqrt{V}$ is also a Lyapunov function for the modified system:
\begin{align*}
\sum\limits_{y\in \X}\tilde P^{(\gamma)}(y|x) \sqrt{\V(y)} \leq \sqrt{b \V(x)} +\sqrt{\V(x^*)+d}~\I_{C\cup \{x\in \X:b \V(x)\leq \V(x^*)\}}(x)\quad \text{for each }x\in \X.
\end{align*}
This drift inequality and the assumption that $|g(x)|\leq \sqrt{\V(x)}$ for each $x\in \X$  allow us to apply \cite[Theorem 17.7.1]{Meyn2009}, see also \cite[equation (17.39)]{Meyn2009},   and conclude that, for some $c_0>0$ independent of $\gamma$, Poisson equation (\ref{eq:newPoisson}) admits the fundamental solution $J^{(\gamma)}:\X\rightarrow \R$ such that
\begin{align*}
|J^{(\gamma)}(x)|\leq c_0(\sqrt{\V(x)}+1), \text{ for each } x\in \X.
\end{align*}
 Function $V^{(\gamma)}$ is another solution of Poisson equation (\ref{eq:newPoisson}), such that  $J^{(\gamma)}(x) =V^{(\gamma)}(x)+J^{(\gamma)}(x^*)$, for each $x\in \X$, because $V^{(\gamma)}(x^*)=0$. Since $\V\geq 1$, there exists   constant $B_1>0$ such that
\begin{align*}
|V^{(\gamma)}(x)|\leq |J^{(\gamma)}(x)| +|J^{(\gamma)}(x^*)|\leq c_0(\sqrt{\V(x)}+1) +c_0(\sqrt{\V(x^*)}+1)\leq \sqrt{B_1 \V(x)}, \quad \text{for each }x\in \X.
\end{align*}
We have proved that, for some   constant $B_1>0$,
  \begin{align*}\left(V^{(\gamma)}(x)\right)^2\leq B_1 \V(x)\quad \text{for each }x\in \X,\end{align*} for any $\gamma\in [0,1].$

We let $G^{(\gamma)}(x) := Var\left[V^{(\gamma)}(x^{(1)})~|~x^{(0)}=x\right].$  Then there exists a positive constant $B$ such that
  \begin{align}\label{eq:var_ineq1}
  G^{(\gamma)}(x) \leq  \sum\limits_{y\in \X}P(y|x) \left(V^{(\gamma)}(y)\right)^2\leq B_1P\V(x)\leq B\V(x),
  \end{align}
where the last inequality follows from the drift condition (\ref{eq:drift}).
By  \cite[Theorem 15.0.1]{Meyn2009} we have that there exist constants $R>0$ and $r\in (0,1)$ such that
\begin{align}\label{eq:var_ineq2}
\Big|  P^k G^{(\gamma)}(x) -  \beta^{(\gamma)}\Big|\leq R\V(x) r^k,
\end{align}
where  $\beta^{(\gamma)} := \sum\limits_{x\in \X} \mu(x) G^{(\gamma)}(x)$ is a discounted asymptotic variance.

Then
\begin{align*}
Var[ \hat V^{(\gamma )}(x)] &=\gamma^2\E\left[ \sum\limits_{k=0}^{\sigma(x^*)-1}  \gamma^{2k}\left( Var\left[V^{(\gamma)}(x^{(k+1)})~|~x^{(k)}\right]\right)~\Big|~x^{(0)}= x \right]\\
&\leq \gamma^2 \sum\limits_{k=0}^\infty \gamma^{2k}\E[G^{(\gamma)}(x_k)~|~x_0=x ]\\
&\leq  \gamma^2 \sum\limits_{k=0}^\infty  \gamma^{2k}\left( R\V(x) r^k + \beta^{(\gamma)} \right)\\
&=\gamma^2\left( R\V(x)\frac{1}{1-\gamma^2r} + \beta^{(\gamma)} \frac{1}{1-\gamma^2}  \right)\\
&\leq \gamma^2\left( R\V(x)\frac{1}{1-\gamma^2r} + (\mu^T\V) B\frac{1}{1-\gamma^2}  \right),
\end{align*}
where the second inequality follows from (\ref{eq:var_ineq2}) and the last inequality follows from  (\ref{eq:var_ineq1}).

\end{proof}

\section{Maximal stability of the proportionally randomized policy}\label{sec:PR}

We assert that a discrete-time MDP obtained by
uniformization of the multiclass queueing network SMDP model is stable
under the proportionally randomized (PR) policy if the load conditions
(\ref{eq:load}) are satisfied. We illustrate the proof for the criss-cross
queueing network. We let $x\in \Z_+^3$ be a state for the discrete-time
MDP. The proportionally randomized policy $\pi$ is given by
\begin{align*}
  & \pi(x) =
    \begin{cases}
      \Big(\frac{x_1}{x_1+x_3}, 1, \frac{x_3}{x_1+x_3}\Big) & \text{ if } x_2\ge 1 \text{ and } x_1+x_3\ge 1, \\
      \Big(\frac{x_1}{x_1+x_3}, 0, \frac{x_3}{x_1+x_3}\Big) & \text{ if } x_2= 0 \text{ and } x_1+x_3\ge 1, \\
    \big(0, 1, 0\big) & \text{ if } x_2\ge 1 \text{ and } x_1+x_3=0 , \\
     \big(0, 0, 0\big) & \text{ if } x_2=0 \text{ and } x_1+x_3=0.
      \end{cases}
\end{align*}
Recall the transition probabilities $\tilde P$ defined by (\ref{eq:unif}).  The
discrete-time MDP operating under policy $\pi$ is a DTMC. Now we can  specify its
transition matrix.  For $x\in \Z_+^3$ with $x_1\ge 1$, $x_3\ge 1$, and $x_2\ge 1$,
\begin{align*}
  P(y|x)& = \pi_1(x)\tilde P\big(y|x,(1,2)\big)+\pi_3(x)\tilde P\big(y|x,(3,2)\big) \quad \text{ for each } y\in \Z_+^3.
\end{align*}
For $x\in \Z_+^3$ on the boundary with $x_1\ge 1$, $x_3\ge 1$, and $x_2= 0$,
\begin{align*}
  P(y|x)& = \pi_1(x)\tilde P\big(y|x,(1,0)\big)+\pi_3(x)\tilde P\big(y|x,(3,0)\big) \quad \text{ for each } y\in \Z_+^3.
\end{align*}
For $x\in \Z_+^3$ on the boundary with $x_1= 0$, $x_3\ge 1$, and $x_2\ge 1$,
\begin{align*}
  P(y|x)& = \tilde P\big(y|x,(3,2)\big) \quad \text{ for each } y\in \Z_+^3.
\end{align*}
Similarly, we write the transition probabilities for other boundary cases. One can verify that
\begin{align}
  & P\big((x_1+1,x_2, x_3)| x\big)  = \frac{\lambda_1}{B},
    \quad    P\big((x_1,x_2, x_3+1)| x\big)  = \frac{\lambda_3}{B}, \label{eq:pr1}\\
  &  P\big((x_1-1,x_2+1, x_3)| x\big) =\frac{\mu_1}{B}\frac{x_1}{x_1+x_3}  \quad \text{ if } x_2\ge 1, \\
  & P\big((x_1,x_2, x_3-1)| x\big) =\frac{\mu_3}{B}\frac{x_3}{x_1+x_3} \quad \text{ if } x_3\ge 1, \\
  &  P\big((x_1,x_2-1, x_3)| x\big) =\frac{\mu_2}{B} \quad \text{ if } x_2\ge 1, \\
  & P(x|x) = 1- \sum_{y\neq x} P(y|x),\label{eq:pr5}
\end{align}
where $B=\lambda_1+\lambda_3+\mu_1+\mu_2+\mu_3$. This transition
matrix $P$ is irreducible. Now we consider the continuous-time
criss-cross network operating under the head-of-line
proportional-processor-sharing (HLPPS) policy  defined in
\cite{Bramson1996}. Under the HLPPS policy, the  jobcount  process
$\{Z(t), t\ge 0\}$ is a CTMC. Under the load condition (\ref{eq:load_cc}),
\cite{Bramson1996} proves that the CTMC is positive recurrent. One can
verify that the transition probabilities in
(\ref{eq:pr1})-(\ref{eq:pr5}) are identical to the ones for a
uniformized DTMC of this CTMC. Therefore, the DTMC corresponding to the
transition probabilities (\ref{eq:pr1})-(\ref{eq:pr5}) is positive
recurrent,  and proves  the stability of the discrete-time MDP operating under the
proportionally randomized policy.

\section{Additional experimental results}\label{sec:regVSinf}

In Remark \ref{rem:regVSinf} we discussed two possible biased
estimators of the solution to the Poisson equation. In this section we
compare the performance of the PPO algorithm with these two
estimators.  We consider two versions of line 7 in Algorithm \ref{alg2}: Version 1 uses the regenerative discounted value function
(VF) estimator (\ref{eq:esf}), and Version 2 uses the discounted value
function estimator (\ref{eq:Jesf}). We apply two versions of the PPO
algorithm for the criss-cross network operating under the  balanced medium (B.M.) load
regime.  The queueing network parameter setting is identical to
  the one detailed in Section \ref{sec:cc}, except that the quadratic
  cost function $g(x) = x_1^2+x_2^2+x_3^2$ replaces the linear
  cost function that is used to minimize the long-run average cost, where
$x_i$ is a number of jobs in buffer $i$, $i=1, 2, 3.$

We use  Xavier initialization to initialize
the policy NN parameters $\theta_0$. We take the empty system state $x^* = (0,0,0)$ as a regeneration state. Each episode in each iteration starts at the regenerative state and runs for $6,000$ timesteps. We compute the one-replication estimates of a value function (either regenerative discounted VF or discounted VF)   for the first $N=5,000$ steps at each episode. In this experiment we simulated $Q=20$ episodes in parallel.  The values of the remaining  hyperparameters (not mentioned yet) are  the same as  in Table \ref{tab:par2}.

In Figure \ref{fig:regVSinf} we compare the learning curves of PPO algorithm \ref{alg2}  empirically to demonstrate the benefits of using the regenerative discounted VF estimator over the discounted VF estimator when the system regeneration occurs frequently.

\begin{figure}[H]
\centering%
\includegraphics[width=.5\linewidth]{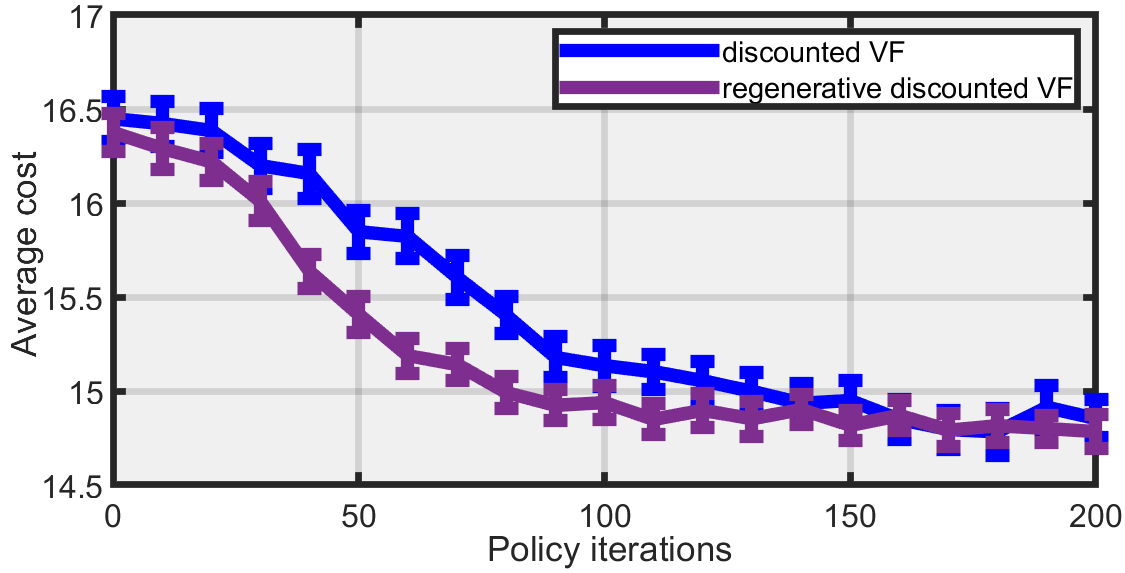}
\caption[]{
Learning curves from Algorithm \ref{alg2} for the criss-cross network with the B.M. load and quadratic cost function.
The solid purple and blue lines show the performance of  the PPO policies obtained from Algorithm \ref{alg2}  in which
the solutions to the Poisson equations are estimated  by the discounted VF estimator and by the regenerative discounted VF estimator, respectively.
}

 \label{fig:regVSinf}
\end{figure}

\section{Neural network structure}\label{sec:nn}

In the experiments we parameterized the RL policy with a neural
network. Here, we use $\theta$ to denote the vector of weights and
  biases of the neural network. For a fixed parameter $\theta$, the
  neural network outputs deterministically distribution
  $\pi_\theta(\cdot|x)$ over the action space for each state
  $x\in \X$. Therefore, the resulting policy $\pi_\theta$ is a randomized policy as explained in Section~\ref{sec:MQN}.

To represent the policy we use a fully connected  feed-forward neural network with one input layer,   three hidden layers with tanh activation functions, and one output layer. The input layer has $J$ units, one for each job class, the first hidden layer has $10\times J$ units, the third hidden layer has $10\times L$, where $L$ is  number of stations in the queueing system. The number of units in the  second hidden layer is a geometric mean of units in the first and third hidden layers (i.e. $10\times \sqrt{LJ}$).

We use $z^{(k)}_j$ to denote the variable in the $j$th unit of hidden
layer $k$, $k=1, 2, 3$. Thus, our feed-forward neural network has the following representations:
\begin{align*}
  & z^{(1)}_j =  h\Big(\sum_{i=1}^J A^{(1)}_{ji}x_i + b^{(1)}_j\Big), \quad j=1, \ldots, 10J, \\
  & z^{(2)}_j =  h\Big(\sum_{i=1}^{10J} A^{(2)}_{ji}z^{(1)}_i + b^{(2)}_j\Big), \quad j=1, \ldots, 10\sqrt{LJ},\\
  & z^{(3)}_j =  h\Big(\sum_{i=1}^{10\sqrt{LJ}} A^{(3)}_{ji}z^{(2)}_i + b^{(3)}_j\Big), \quad j=1, \ldots, 10\sqrt{L},
\end{align*}
where $h:\R\to\R$ is the activation function given by $h(y)=\tanh(y)$ for each $y\in \R$.

 We denote $p=(p_j)$ as the output vector, which is given by
\begin{align*}
  p_j = \sum_{i=1}^{10\sqrt{L}} A^{(4)}_{ji}z^{(3)}_i + b^{(4)}_j, \quad j\in \J,
\end{align*}
and normalize output vector $p$ via a \textit{softmax function}  into $L$ probability
distributions as:
\begin{align}\label{eq:softmax}
  \pi_{\theta}(j|x) = \frac{\exp(p_j)}{\sum_{i\in \mathcal{L}(\ell)} \exp(p_i)} \quad \text{ for each } j\in \mathcal{B}(\ell), \, \ell\in \LL,
\end{align}
where the sets $\J$, $\LL$, and $\mathcal{B}(\ell)$ are defined in Section~\ref{sec:MQN}.

The neural network parameter $\theta$ is the vector of weights $A$'s and biases $b$'s
\begin{align*}
\theta=  \Big(A^{(1)},\, b^{(1)}, A^{(2)},\, b^{(2)}, A^{(3)},\, b^{(3)}, A^{(4)},\, b^{(4)}\Big),
\end{align*}
which has dimension
\begin{align*}
  K=\big(J\times 10J + 10J\big) +   \big(10J \times 10\sqrt{LJ} + 10\sqrt{LJ}\big) +    \big(10\sqrt{LJ}\times 10L + 10L\big) +
    \big(10L\times J + J\big).
\end{align*}
For example, when $J=21$ and $L=7$, this dimension is approximately equal to
$30203$.

To represent the value function we use a neural network whose
architecture is almost identical to the policy neural network except
that the third hidden layer has $10$ units. The
number of units in the second hidden layer is $10\times \sqrt{J}$. The
output layer contains one unit with a linear activation function, which means
that
\begin{align*}
  V(x) = \sum_{i=1}^{10} A^{(4)}_{i} z^{(3)}_i + b^{(4)}.
\end{align*}

 For the N-model processing network  in Section  \ref{sec:nmodel} the  structure of policy and value NNs is the same as for the MQNs described above, except for the meaning of  set $\B(\ell)$ in (\ref{eq:softmax}). We consider   station $\ell\in \LL$ of a processing network. The set   $\B(\ell)$ includes a job class  if and only if  there is an activity  such that station $\ell$ can process jobs from this   class.

\section{Implementation details and experiment parameters}\label{sec:par}

 We use Tensorflow v1.13.1  \cite{Abadi2016} to build a training routine of the neural networks and Ray package v0.6.6 \cite{Moritz2018}
    to maintain parallel simulation of the actors. We run all experiments on a   2.7 GHz  96-core processor with 1510 GB of RAM.

    We optimize  the value and policy functions to minimize the corresponding loss functions (\ref{eq:Vappr}), (\ref{eq:popt}) by the Adaptive Moment Estimation (Adam) method \cite{Kingma2017}.
    The Adam method is an algorithm for mini-batch gradient-based optimization.
We assume that $N$ datapoints have been generated  $D^{(1:N)}= \Big\{  (x^{(j)}, a^{(j)}, \hat A_j)\Big\}_{j=1}^N$ to update the policy NN  or $D^{(1:N)} = \left\{  ( x^{(j)}, \hat V_j)\right\}_{j=1}^N$ to update the value NN.
 The Adam algorithm runs for $E$ epochs. The number of epochs is the number of complete passes through the entire dataset. In the beginning of a new epoch $e$ the entire dataset
is randomly reshuffled and divided into   batches with size $m$. Then each batch (indexed by $n$) is passed to the learning algorithm and the parameters
of the neural networks $\theta = (\theta^1, ..., \theta^i,..., \theta^K)$  are updated at the end of every such step  according to

     \begin{align*}
     \theta_{n+1}^i = \theta_n^i - \varsigma \frac{1}{\sqrt{\hat H^i_n} +\varrho} \hat G^i_n,
     \end{align*}
     where $\varsigma$ is a learning rate, $\hat G_n$ and $\hat H_n$ are moving average estimates of the first and second moments of the gradient, respectively, and $\varrho<<1$ is a constant.

     We use the batches to compute the gradient of a loss function $\hat L(\theta, D)$, such as for (\ref{eq:popt}):
    \begin{align}\label{grad}
    g_n = \frac{1}{m} \nabla_\theta   L\left(\theta,  D_e^{ (nm:nm+m)}\right),
    \end{align}
where $D_e^{ (nm:nm+m)}$ denotes the data segment in the $n$th batch of size $m$ at epoch $e$.

 The moving averages $G_{0}$ and $H_0$ are initialized as vectors of zeros at the first epoch.   Then    the Adam method updates the moving average estimates   and includes bias corrections to   account for their initialization at the origin:

   \begin{align*}
   \begin{cases}
    G_{n} = \beta_1 G_{n-1} +(1-\beta_1) g_{n-1},\\
    \hat G_{n} = \frac{G_{n}}{1 - \beta_1^n},
    \end{cases}
    \end{align*}
where  $\beta_1>0$,  with $\beta_1^n$  denoting $\beta_1$ to the power $n$.

    Similarly, we compute the second moments by
    \begin{align*}
    \begin{cases}
    H_{n} = \beta_2 H_{n-1} +(1-\beta_2)  g_{n-1}^2,\\
    \hat H_{n} = \frac{H_{n}}{1 - \beta_2^n},
    \end{cases}
    \end{align*}
    where $g_n^2$ means the elementwise square, $\beta_2>0$   with $\beta_2^n$  denoting $\beta_2$ to the power $n$.

 Each subsequent epoch continues   the count over $n$  and keeps  updating the moving average estimates $G_n, H_n$  starting from their final values of the latest  epoch.

Table \ref{tab:par1} and Table \ref{tab:par2} list the PPO hyperparameters we choose for the experiments in Section \ref{sec:experiments}. Table 7 reports the estimates of the running time of Algorithm \ref{alg2}.

\begin{table}[H]
\centering%
\begin{tabular}{l|@{\quad}l}
  \hline
  Parameter  & Value\\\hline
   Clipping parameter  $(\epsilon)$ & $0.2\times \max[ \alpha,0.01]$ \\
 No. of regenerative cycles per actor (N) & 5,000 \\
  No. of  actors $(Q)$  & 50 \\
  Adam parameters for policy NN& $\beta_1 = 0.9$, $\beta_2 = 0.999$, $\varrho = 10^{-8},$  $\varsigma=5\cdot 10^{-4}\times \max[ \alpha,0.05]  $   \\
  Adam parameters for policy NN& $\beta_1 = 0.9$, $\beta_2 = 0.999$, $\varrho = 10^{-8},$  $\varsigma=2.5\cdot 10^{-4}  $ \\
  No. of epochs (E) & 3\\
  Minibatch size in Adam method (m) & 2048\\
\end{tabular}
\caption[]{PPO hyperparameters used in Algorithms \ref{alg1} and \ref{alg1amp} for the experiments in Section \ref{sec:cc}.  Parameter $\alpha$ decreases linearly  from $1$ to $0$ over the course of learning: $\alpha =(I - i)/I$ on the $i$th policy iteration, $i=0, 1,...,I-1. $  }\label{tab:par1}
\end{table}

\begin{table}[H]
\centering%
\begin{tabular}{l|@{\quad}l}
  \hline
  Parameter  & Value\\\hline
   Clipping parameter  $(\epsilon)$ & $0.2\times \max[ \alpha,0.01]$ \\
  Horizon $(N)$ & 50,000 \\
  No. of  actors $(Q)$  & 50 \\
  Adam parameters for policy NN& $\beta_1 = 0.9$, $\beta_2 = 0.999$, $\varrho = 10^{-8},$  $\varsigma=5\cdot 10^{-4}\times \max[ \alpha,0.05]  $   \\
  Adam parameters for policy NN& $\beta_1 = 0.9$, $\beta_2 = 0.999$, $\varrho = 10^{-8},$  $\varsigma=2.5\cdot 10^{-4}  $ \\
  Discount factor $(\beta)$  & 0.998  \\
  GAE parameter $(\lambda)$  & 0.99 \\
  No. of epochs (E)& 3\\
  Minibatch size in Adam method (m)  & 2048\\
\end{tabular}
\caption[]{PPO hyperparameters used in  Algorithm \ref{alg2} for the experiments in Sections \ref{sec:ext}  and  \ref{sec:nmodel}. Parameter $\alpha$ decreases linearly  from $1$ to $0$ over the course of learning: $\alpha =(I - i)/I$ on the $i$th policy iteration, $i=0, 1,...,I-1. $}\label{tab:par2}
\end{table}

   \begin{table}[H]
\centering%
\begin{tabular}{|c|c|}
  \hline
  Num. of classes $3L$  & Time (minutes)  \\\hline
  6 & 0.50 \\\hline
  9 & 0.73 \\\hline
  12  & 1.01\\\hline
  15  & 2.12 \\\hline
  18  & 4.31\\\hline
  21  & 7.61 \\
  \hline
\end{tabular}
\caption[]{Running time of \textit{one policy iteration} of Algorithm \ref{alg2}  for the extended six-class network in Figure \ref{fig1}.}\label{tab:rt}
\end{table}

In the Algorithm \ref{alg2} we use  finite length episodes to estimate the expectation of the loss function in line 10. For each episode we need  to specify an initial state.
 We propose sampling the initial states from the set of states generated during previous policy iterations.
  We consider the $i$th policy iteration of the algorithm. We need to choose initial states to simulate policy $\pi_i$.  Since policy $\pi_{i-1}$ has been simulated in the $(i-1)$th iteration of the algorithm,   we can    sample $Q$ states uniformly at random from the episodes generated under policy $\pi_{i-1}$ and  save them in memory. Then we use them as initial states for   $Q$ episodes under $\pi_i$ policy. For policy $\pi_0$ all $Q$ episodes start from state $x = (0,...,0).$

\bibliographystyle{plainurl}
\bibliography{PPO_paper,dai20200529}

\end{document}